\newenvironment{dedication}
        {\vspace{6ex}\begin{quotation}\begin{center}\begin{em}}
        {\par\end{em}\end{center}\end{quotation}}
\newtheorem{theorem}{Theorem}[section]
\theoremstyle{plain}
\newtheorem{corollary}[theorem]{Corollary}
\newtheorem{defi}[theorem]{Definition}
\newtheorem{lemma}[theorem]{Lemma}
\newtheorem{prop}[theorem]{Proposition}
\newtheorem{remark}[theorem]{Remark}
\numberwithin{equation}{section}
\newcommand \la {\lambda}
\newcommand \La {\Lambda}
\newcommand \A {{\cal A}}
\def\Yk{{\mathcal Y}}
\newcommand \q {{\bf q}}
\def\col{{\rm col}}
\newcommand \Lyap{{\rm Lyap}}
\def\Lip{{\rm Lip}}
\def\sif{{\mathfrak s}}
\def\Xx{{\mathfrak X}}
\def\Xxs{\Xx^{\vec{s}}}
\def\ek{{\mathfrak e}}
\def\Tf{{\mathfrak T}}
\def\Ff{{\mathfrak F}}
\def\ak{{\mathfrak a}}
\def\zf{{\mathfrak z}}
\def\uf{{\mathfrak u}}
\def\hf{{\mathfrak h}}
\def\One{{1\!\!1}}
\def\Hk{{\mathcal H}}
\def\Xk{{\mathcal X}}
\def\wtil{\widetilde}
\def\wt{\wtil}
\def\half{\frac{1}{2}}
\newcommand{\lam}{\lambda}
\def\Lam{\Lambda}
\newcommand{\gam}{\gamma}
\newcommand{\om}{\omega}
\newcommand{\omb}{\boldsymbol \omega}
\def\Om{\Omega}
\newcommand{\Gam}{\Gamma}
\newcommand{\sig}{\sigma}
\def\bbbe{\vec{\bf e}}
\def\AA{{\mathbb A}}
\def\Thb{{\mathbf \Theta}}
\newcommand{\R}{{\mathbb R}}
\newcommand{\Z}{{\mathbb Z}}
\def\N{{\mathbb N}}
\newcommand{\E}{{\mathbb E}\,}
\newcommand{\Prob}{{\mathbb P}\,}
\def\P{\Prob}
\newcommand{\Nat}{{\mathbb N}}
\def\bp{{\bf p}}
\def\bq{{\bf q}}
\def\Af{{A}}
\def\bqcyl{[\bq.\bq]}
\def\A{{\mathcal A}}
\def\Bk{{\mathcal B}}
\def\Rk{{\mathcal R}}
\def\Sf{{\sf S}}
\def\Vk{{\mathcal V}}
\def\one{\vec{1}}
\def\be{\begin{equation}}
\def\ee{\end{equation}}
\newcommand{\Ek}{{\mathcal E}}
\newcommand{\Wk}{{\mathcal W}}
\newcommand{\eps}{{\varepsilon}}
\newcommand{\es}{\emptyset}
\def\ov{\overline}
\newcommand{\HH}{{\mathcal H}}
\def\Max{{\rm Max}}
\def\Min{{\rm Min}}
\def\card{{\rm card}}
\def\a{a}
\def\ve1{\vec{1}}
\def\Frg{{\mathfrak G}}
\def\ok{{\mathfrak o}}
\def\M{{\mathbf M}}
\def\Tk{{\mathcal T}}
\def\Ak{{\mathcal A}}
\def\Pbi{{\mathbf \Pi}}
\def\bom{{\bf a}}
\def\loden{\underline{d}}
\begin{document}

\title [The H{\"o}lder Property for the Spectrum of Translation Flows]{The H{\"o}lder Property for the Spectrum of Translation Flows in Genus Two }

\author{Alexander I. Bufetov}
\address{Alexander I. Bufetov\\ 
Aix-Marseille Universit{\'e}, CNRS, Centrale Marseille, I2M, UMR 7373}
\address{39 rue F. Joliot Curie Marseille France }
\address{Steklov  Mathematical Institute of RAS, Moscow}
\address{Institute for Information Transmission Problems, Moscow}
\address{National Research University Higher School of Economics, Moscow}

\email{bufetov@mi.ras.ru}

\author{Boris Solomyak }
\address{Boris Solomyak\\ Department of Mathematics, Bar-Ilan University, Ramat-Gan, Israel}
\email{bsolom3@gmail.com}

\begin{abstract} The paper is devoted to generic translation flows corresponding to Abelian differentials with one zero of order two on flat surfaces of  genus two. These flows are weakly mixing by the Avila-Forni theorem.
Our main result
gives first quantitative estimates on their spectrum, establishing the H\"older property for the spectral measures of Lipschitz functions.
 The proof proceeds via uniform estimates of twisted Birkhoff integrals
in the symbolic
framework of random Markov compacta and arguments of Diophantine nature in the spirit of Salem, Erd\H{o}s and Kahane.\end{abstract}

 \maketitle

\thispagestyle{empty}


\begin{dedication}To the  memory of William Austin Veech (1938--2016)
\end{dedication}


\section{Introduction}
\subsection{Spectrum of translation flows}
Let $M$ be a compact orientable surface.  To
  a holomorphic one-form $\omb$ on $M$ one can assign  the corresponding {\it vertical} flow $h_t^+$ on $M$, i.e., the flow at unit speed along the
 leaves of the foliation $\Re(\omb)=0$.
The vertical flow preserves the measure 
$
{\mathfrak m}=i(\omb\wedge {\overline \omb})/2, 
$
the area form induced by $\omb$. 
By a theorem of Katok \cite{katok}, the flow $h_t^+$ is never mixing.
The moduli space of abelian differentials carries a natural volume measure, called the Masur-Veech measure \cite{masur}, \cite{veech}.  
For almost every Abelian differential with respect to the Masur-Veech measure,  Masur \cite{masur} and Veech \cite{veech}  independently and simultaneously proved that the flow   $h_t^+$ is
uniquely ergodic.  Under additional assumptions on the combinatorics of the abelian differentials, weak mixing for almost all  translation flows has been established by Veech in \cite{veechamj} and in full generality by Avila and Forni
\cite{AF}. The spectrum of translation flows is therefore almost surely  continuous and always has a singular component. No quantitative results have, however,  previously been obtained about the spectral measure. 
Sinai [personal communication] posed the following 

\medskip

{\bf Problem.} {\it Find the local asymptotics for the spectral measure of translation flows. }

\medskip
\subsection{Formulation of the main result.}
The aim of this paper is to give first quantitative estimates on the spectrum of translation flows. 
Let $\HH(2)$ be the moduli space of abelian differentials on a surface of genus $2$ with one zero of order two. The  natural smooth Masur-Veech measure on the stratum $\HH(2)$ is denoted by $\mu_2$.
Our main result is
that for almost all abelian differentials in $\HH(2)$, the spectral measures of Lipschitz functions with respect to the corresponding translation flows have the H{\"o}lder property. 
Recall that for a square-integrable test function $f$, the spectral measure $\sig_f$ is defined by
$$
\widehat{\sig}_f(-t) = \langle f\circ h_t^+, f \rangle,\ \ \ t\in \R,
$$
see Section 3.3 for details.
A point mass for the spectral measure corresponds to an eigenvalue, so H\"older estimates for spectral measures quantify weak mixing for our systems.


\begin{theorem}\label{main-moduli}
There exists $\gamma>0$ such that  for $\mu_2$-almost every abelian differential $(M, \omb)\in \HH(2)$ the following holds. For any $B>1$ there exist constants $C=C(\omb,B)$ and $r_0=r_0(\omb,B)$ such that for any Lipschitz function $f$ on $M$, for all $x\in [B^{-1},B]$ and $r\in (0, r_0)$ we have 
$$
\sig_f([x-r, x+r])\le C\|f\|_L\cdot r^\gamma.
$$
\end{theorem}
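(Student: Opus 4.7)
The plan is to reduce the desired H\"older estimate for $\sig_f$ to a uniform polynomial decay bound on the twisted Birkhoff integrals
$$ S_T^{(x)}(f)(p) := \int_0^T e^{-2\pi i x t} f(h_t^+ p)\, dt, $$
and then to establish that bound by combining Rauzy-Veech renormalization with a Diophantine avoidance argument in the spirit of Salem, Erd\H{o}s and Kahane. The reduction is classical: from the identity $\wh{\sig}_f(-t)=\langle f\circ h_t^+,f\rangle$ one derives the Fej\'er-kernel formula
$$ \int_M |S_T^{(x)}(f)|^2\, d{\mathfrak m} \;=\; \int_\R \frac{\sin^2\bigl(\pi(x-\xi)T\bigr)}{\pi^2(x-\xi)^2}\, d\sig_f(\xi) \;\gtrsim\; T^2\, \sig_f\!\bigl([x-1/T,\, x+1/T]\bigr), $$
so it suffices to prove $\sup_{p\in M}|S_T^{(x)}(f)(p)| \le C(\omb,B)\,\|f\|_L\, T^{1-\alpha}$ uniformly for $x\in[B^{-1},B]$ and some $\alpha>0$, and then take $T=1/r$, which yields the theorem with $\gamma = 2\alpha$.

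The next step is to pass to the symbolic framework announced in the abstract. The stratum $\HH(2)$ admits a coding by Rauzy-Veech-Zorich induction, producing a random sequence of Markov compacta (or Bratteli-Vershik diagrams) whose renormalization cocycle is (a factor of) the Kontsevich-Zorich cocycle. In this framework every orbit segment $\{h_t^+ p : t\in[0,T]\}$ is decomposed into a hierarchy of Rokhlin towers, and $S_T^{(x)}(f)$ is expressed as a telescoping sum of products of twisted cocycle sums
$$ \Phi^{(l)}(x) \;\approx\; \sum_{j} e^{-2\pi i x\, \hhl_j}, $$
where $\hhl_j$ are the heights of the level-$l$ towers. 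A Lipschitz $f$ is well approximated on each level by functions constant on tower-floors, with error controlled by $\|f\|_L$ and the floor widths, so up to acceptable error the problem reduces to establishing exponential decay of a normalized product $\prod_{l\le N} m_l^{-1}|\Phi^{(l)}(x)|$ uniformly in $x\in[B^{-1},B]$, where $m_l$ is the alphabet size at level $l$ and $N\asymp \log T$.

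Proving this uniform decay is where the main obstacle lies. For Lebesgue-a.e.\ $x$ (and $\mu_2$-a.e.\ differential) the Kontsevich-Zorich spectral gap and the Avila-Forni weak-mixing theorem already force decay, but the uniform statement across a continuum of $x$'s demands considerably more. Following Salem-Erd\H{o}s-Kahane, at each level $l$ we single out a \emph{bad set} $E_l\subset[B^{-1},B]$ consisting of those $x$ for which $m_l^{-1}|\Phi^{(l)}(x)|\ge 1-\delta$; a quadratic van der Corput estimate controls $|E_l|$, and the crucial combinatorial point is that a single $x$ can lie in $E_l$ only at a small density of levels before the arithmetic structure of the height vector $(\hhl_j)_j$ renews enough to produce cancellation. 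Here we exploit the two-dimensional non-trivial part of the Kontsevich-Zorich cocycle in $\HH(2)$: positivity of its second Lyapunov exponent, together with quantitative Oseledets/recurrence estimates for the Teichm\"uller flow on $\HH(2)$, guarantees that the successive renormalization matrices infinitely often drive the heights into a quantitatively non-resonant configuration. A Borel-Cantelli-type summation over bad levels then yields the required uniform polynomial bound on $|S_T^{(x)}(f)(p)|$, completing the proof of Theorem~\ref{main-moduli}.
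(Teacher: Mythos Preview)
Your broad architecture matches the paper's: reduce to a uniform polynomial bound on twisted Birkhoff integrals via the Fej\'er kernel, pass to the Bratteli--Vershik/$S$-adic coding given by Rauzy--Veech induction, decompose orbit segments into renormalization towers, and control the resulting products using the two positive simple Lyapunov exponents of the Kontsevich--Zorich cocycle in $\HH(2)$ together with quantitative recurrence. So far so good.

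The genuine gap is in your deployment of the Erd\H{o}s--Kahane argument. You define the bad sets $E_l\subset[B^{-1},B]$ in the \emph{frequency} variable $x$ and propose a Borel--Cantelli or density argument over levels to conclude that no $x$ stays bad too often. But there is no ``a.e.'' left in the $x$ variable: the theorem demands the H\"older bound for \emph{every} $x\in[B^{-1},B]$ once the abelian differential is fixed. A van der Corput bound on $|E_l|$ and a Borel--Cantelli summation would at best give an a.e.-$x$ statement, and your sentence ``a single $x$ can lie in $E_l$ only at a small density of levels'' is precisely the uniform assertion you need to \emph{prove}, not something that follows from measuring $|E_l|$.

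In the paper the Erd\H{o}s--Kahane argument is applied in the \emph{height-vector} variable $\vec{s}$ (equivalently, in the abelian differential, via the disintegration of Masur--Veech measure as $\Prob$ on the symbolic base times Lebesgue on $\vec{s}$). One writes $x\,|\zeta^{[n]}(v_n)|_{\vec{s}}=K_n+\eps_n$ with carefully chosen good return words $v_n$, and shows that if some $x\in[B^{-1},B]$ has $|\eps_n|$ small outside a set of $n$'s of density $<\delta$, then the integer sequence $(K_n)$ satisfies a near-recursion governed by $2\times 2$ matrices built from the top two Oseledets directions, so that most $K_{n+2}$ are uniquely determined by $K_n,K_{n+1}$. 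This pins down the ratio $a_2/a_1$ of the projections of $\vec{s}$ onto the top two Oseledets subspaces up to an exponentially small error, and counting the few free choices gives a covering of the set of bad $\vec{s}$'s of Hausdorff dimension $<m-1$. Thus for Lebesgue-a.e.\ $\vec{s}$ the bad-density event fails for \emph{every} $x$, which is exactly the uniformity you need. The large-deviations input (your ``quantitative recurrence'') enters earlier, to control the norms of the individual renormalization matrices and hence the number of free choices in the $K_n$-recursion. Reorient your Erd\H{o}s--Kahane step toward the $\vec{s}$ variable and the outline becomes complete.
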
 

The proof uses the  symbolic formalism of \cite{Buf-umn}, namely, the representation of translation flows by flows along orbits of the asymptotic equivalence relation of a Markov compactum. The H{\"o}lder property 
is then reformulated as a consequence of a statement on Diophantine approximation involving the incidence matrices of the graphs forming the Markov compactum that codes the translation flow.
These incidence matrices are, in turn, realizations of a renormalization cocycle, isomorphic, under our symbolic coding, to the Kontsevich-Zorich cocycle over the Teichm{\"u}ller flow on the moduli space of abelian differentials.  By the Oseledets multiplicative ergodic theorem, the cocycle admits 
a decomposition into Oseledets subspaces corresponding to the distinct Lyapunov exponents. 
Our argument in this paper requires that the Kontsevich-Zorich cocycle admit  exactly two 
positive Lyapunov exponents and not have zero Lyapunov exponents; this is the reason why  our main result only covers the stratum $\HH(2)$. 

We stress that our proof only works for the Masur-Veech smooth measure. This can informally be explained as follows. A translation flow can be represented as a suspension flow over an interval exchange transformation with a piecewise constant roof function. Take a measure on a stratum, invariant under the action of the Teichm{\"u}ller flow. If one fixes an interval exchange transformation, then the invariant measure
yields a conditional measure on the polyhedron of admissible tuples of heights of the rectangles.
Our argument in its present form requires that this measure be absolutely continuous with respect to the natural Lebesgue measure; this property only holds for the Masur-Veech smooth measure. 

The H\"older exponent $\gam>0$ obtained in Theorem 1.1 can be given explicitly, but it is very small and certainly not sharp, so we do not pursue this. It is an interesting question to determine the sharp exponent. More can be said about the H\"older exponent at $x=0$ (assuming the test function $f$ has zero mean), see Remark~\ref{rem-Forni}.
Local estimates for spectral measures are obtained via growth estimates of twisted Birkhoff integrals. 

The paper is organized as follows. In Section 2, we introduce the necessary setup of Markov compacta and Bratteli-Vershik (BV) transformations, as well the alternative, closely related framework, based on sequences of substitutions, or $S$-adic systems (see e.g.\ \cite{Vershik1,Vershik2,Vershik-Livshits,solomyak,Buf-umn,BST} for futher background). We will be working in this ``symbolic'' framework for most of the paper, only returning to translation flows in the last Section 11. Estimates of twisted Birkhoff integrals in terms of matrix product are considered in Section 3, which builds on our paper \cite{BuSol2}. The difference is that  \cite{BuSol2} was concerned with a single substitution, or equivalently, with a stationary Bratteli-Vershik diagram.
In Section 4 we state the main theorem for {\em random} BV-systems, satisfying a certain list of conditions, 
among which the key condition is a uniform large deviations  estimate.
That theorem is proved in Sections 5-10. 
It should be emphasized that there are substantial technical difficulties in the transition from the stationary framework of \cite{BuSol2} to the non-stationary setting of this paper. 
In the case of a 
single substitution matrix we could rely on estimates of the Vandermonde matrix, its determinant and its inverse. 
In this paper, we need similar estimates for the cocycle matrices. The Oseledets Theorem  controls norms of these matrices and angles between different subspaces only up to subexponential errors.  It is precisely in order to control these errors that we need the assumption that only two Lyapunov exponents be positive.
In Section 10 we use a variant of the ``Erd\H{o}s-Kahane argument'' that had
originated in the theory of Bernoulli convolution measures, see \cite{Erd,Kahane}.
Finally, in Section 11 we conclude the proof by explaining how the symbolic coding of translation flows gives rise to suspension flows over BV-maps and by checking all the conditions required. The key probabilistic condition is derived from a (slight generalization of) large deviation estimate
for the Teichm\"uller flow in \cite{buf-jams}.

\subsection{General remarks} For comparison, consider the work on weak mixing by Avila-Forni \cite{AF} and the recent  Avila-Delecroix \cite{AD}. Let us parametrize all possible eigenvalues by the line in $H^1(M;\R)$ through the imaginary part of the 1-form $\omb$ (we borrow here some of the wording from the introduction of \cite{AD}, and also refer to that paper for the explanation of the terms used in this paragraph). The Veech criterion \cite{veechamj} says that an eigenvalue is parametrized by an element of ``weak-stable lamination'' associated to an acceleration of the Kontsevich-Zorich cocycle acting modulo $H^1(M,\Z)$. 
In \cite{AF}, a probabilistic method is applied to exclude non-trivial intersections of an arbitrary fixed line in $H^1(M;\R)$ with the weak stable foliation to prove weak mixing for almost every interval exchange, and a simpler, ``linear exclusion'' is used to exclude such intersections and prove weak mixing for almost every translation flow. (On the other hand, \cite{AD} uses additional structure to prove weak-mixing of the translation flow in almost every direction for a given
non-arithmetic Veech surface.) In order to prove H\"older continuity, roughly speaking, we need that there is a positive density of iterates that fall {\bf outside} of a fixed neighborhood of $H^1(M,\Z)$ along the orbit of the acceleration of the Kontsevich-Zorich cocycle, uniformly for a fixed line in $H^1(M;\R)$. This requires much more delicate estimates, see Sections 9 and 10, hence more limited scope of our results. 

Our inspiration came from the work of Salem, Erd\H{o}s, and Kahane on Diophantine approximation and Bernoulli convolutions.  A well-known problem, open since the 1930's, asks whether 
\be \label{Pisot}
\{\lam>1:\ \exists\, \alpha>0\  \  \mbox{such that }\  \ \lim_{n\to \infty} \|\alpha\lam^n\|_{\R/\Z} =0 \}
\ee
is the set of PV-numbers,  that is, $\lam$ an algebraic integer all of whose conjugates are less than one in absolute value. Salem \cite{Salem} showed that the set in (\ref{Pisot}) is countable.
In short, if \be \label{Salem}
\alpha \lam^n = K_n + \eps_n,
\ee
with $K_n$ being the nearest integer, is such that 
$|\eps_{n+j}|$ for $j\ge 0$, is sufficiently small, then all $K_{n+j},\ j\ge 2,$ are uniquely determined by $K_n, K_{n+1}$, and $\lam$ may be recovered as $\lim_{j\to \infty} \frac{K_{n+j}}{K_{n+j-1}}$, hence there are only countable many possibilities for $\lam$. This may be compared with the ``linear exclusion'' from \cite{AF} (in the sense that in both cases the question of (non)convergence is addressed; the method is actually different).

The Bernoulli convolution measure $\nu_\lam$ can be defined by the formula for its Fourier transform:
$$
\widehat{\nu}_\lam(t) = \prod_{n=0}^\infty \cos(2\pi \lam^{-n} t),\ \ t\in \R.
$$
Erd\H{o}s \cite{Erd0} observed that $\lim_{t\to \infty} |\widehat{\nu}_\lam(t)| \ne 0$ for PV-numbers $\lam$, which implies that the corresponding Bernoulli convolution measure $\nu_\lam$ is singular. Salem (see \cite{Salem-book}) proved that $\lim_{t\to \infty} |\widehat{\nu}_\lam(t)| = 0$ in all other cases, but it is an open problem whether such $\nu_\lam$ is necessarily absolutely continuous (see \cite{Hochman,pablo,Varju} for the recent progress in this direction). However, in another early paper, Erd\H{o}s \cite{Erd} proved that for almost all $\lam$ , $\widehat{\nu}_\lam(t)=O(|t|^{-C(\lam)})$, as $|t|\to \infty$, with $C(\lam)\to\infty$ as
$\lam\to 1$, which implies absolute continuity with arbitrarily high smoothness for $\lam$ sufficiently close to one (Kahane \cite{Kahane} later showed that in the argument of Erd\H{o}s one can replace ``almost all'' by ``all outside of  a set of any fixed positive Hausdorff dimension). The argument of Erd\H{o}s and Kahane builds on the Salem idea: if all $\eps_n$ in (\ref{Salem}), outside a set of $n$'s of small positive density are sufficiently small in absolute value, then ``most'' of the $K_n$'s are uniquely determined by the previous ones, and we get a good covering of the ``bad set'' to yield the dimension estimate. A variation on this idea, which came to be called the ``Erd\H{o}s-Kahane argument'',  was used in \cite{BuSol2}, and a much more delicate version of it is developed in this work.

\medskip

{\bf Acknowledgements.} We are  deeply grateful to Giovanni Forni for our useful discussions of the preliminary version of the paper, and to the referee for constructive criticism and helpful suggestions.
The research of A. Bufetov on this project has received funding from the European Research Council (ERC) under the European Union's Horizon 2020 research and innovation programme under grant agreement No 647133 (ICHAOS).
It was also supported by the A*MIDEX project (no. ANR-11-IDEX-0001-02) funded by the  programme ``Investissements d'Avenir'' of the Government of the French Republic, managed by the French National Research Agency (ANR), by the Grant MD 5991.2016.1 of the President of the Russian Federation and by  the Russian Academic Excellence Project `5-100'.
B. Solomyak  has been supported  by NSF grants DMS-0968879, DMS-1361424, and the ISF grant 396/15.

\section{Preliminaries}

\subsection{Markov compacta and Bratteli-Vershik transformations}
The reader is referred to \cite{Vershik1,Vershik2,Vershik-Livshits,solomyak,Buf-umn} for further background.

Let $\Frg$ be the set of all oriented graphs on $m$ vertices such that there is an edge starting at every vertex and an edge ending at every vertex (we allow loops and multiple edges). For an edge $\ek$ we denote by $I(\ek)$ and $F(\ek)$ the initial and final vertices of $\ek$ correspondingly. Assume we are given a sequence $\{\Gam_n\}_{n\in \Z}$ of graphs belonging to $\Frg$. To this sequence we associate the {\em Markov compactum} of paths in our sequence of graphs:
\be \label{def-Mark}
X = \{\ov{\ek} = (\ek_n)_{n\in \Z}:\ \ek_n \in \Ek(\Gam_n),\ F(\ek_{n+1}) = I(\ek_{n})\}.
\ee
We will also need the one-sided Markov compactum $X_+$ (respectively $X_-$), defined in the same way with elements $(\ek_n)_{n\ge 1}$ (respectively $(\ek_n)_{n\le 0}$). A one-sided sequence of
graphs in $\Frg$ can also be considered as a {\em Bratteli diagram} of (finite) rank $m$. We then view its vertices as being arranged in levels $n\ge 0$, so that
the graph $\Gam_n$ connects the vertices of level $n$ to vertices of level $n-1$. (Note that in some papers the direction of the edges is reversed.)
Let $A_n(X) = A(\Gam_n)$ be the incidence matrix of
the graph $\Gam_n$ given by the formula 
$$
A_{ij}(\Gam) = \#\{\ek\in \Ek(\Gam):\ I(\ek) = i,\ F(\ek) = j\}.
$$

On the Markov compactum $X$ we define the ``future tail'' and ``past tail'' equivalence relations, in which
two infinite paths are equivalent iff they agree from some point on (into the future or into the past).

There is a standard construction of {\em telescoping} (= aggregation): for any sequence $1=n_0<n_1 < n_2< \cdots$ we ``concatenate'' the graphs
$\Gam_{n_j},\ldots, \Gam_{n_{j+1}-1}$ to obtain $\Gam'_j\in \Frg$. 

\medskip

\noindent
{\bf Standing Assumption.}  {\em The sequence $\Gam_n$ (after appropriate
telescoping) contains infinitely many occurrences of a single graph $\Gam$ with a strictly positive incidence matrix, both in the past and in the future. }

\medskip

In this case, as is
well-known since the work   of Furstenberg (see e.g.\ (16.13) in \cite{furst}), the 
Markov compactum $X_+$ is {\em uniquely ergodic}, which means that there is a unique invariant probability measure for the ``future tail'' equivalence relation.
We need to develop this point in more detail. In fact, in this case we have (see \cite[1.9.5]{Buf-umn}) that there exist strictly positive vectors $\vec{\zf}^{(\ell)}, \vec{\uf}^{(\ell)}$, for $\ell\in \Z$, such that
$$\vec{\zf}^{(\ell)} = A_\ell^t \,\vec{\zf}^{(\ell+1)},\ \ \ell\in \Z;$$
\be \label{equiv2}
\bigcap_{n\in \N} A^t_{\ell+1} \cdots A^t_{\ell+n} \R_+^m =\R_+\,\vec{\zf}^{(\ell)}, \ \ \ell\in \Z;
\ee
$$\vec{\uf}^{(\ell)} = A_\ell \,\vec{\uf}^{(\ell-1)},\ \ \ell\in \Z;$$
$$\bigcap_{n\in \N} A_{\ell-1} \cdots A_{\ell-n} \R_+^m =\R_+\,\vec{\uf}^{(\ell)}, \ \ \ell\in \Z.$$
The vectors are normalized by $$|\vec{\zf}^{(0)}|_1=1,\ \langle \vec{\zf}^{(0)}, \vec{\uf}^{(0)} \rangle =1.$$
As already mentioned, the Markov compactum $X_+$ is then uniquely ergodic, with the unique tail-invariant probability measure $\nu_+$ given by 
\be \label{meas1}
\nu_+(X_j^+) = \zf^{(0)}_j,\ \ \ \nu_+([\ek_1\ldots\ek_\ell]) = \zf^{(\ell)}_{I(\ek_\ell)},
\ee
where $X_j^+$ is the set of one-sided paths $ \ek_1\ek_2\ldots \in X_+$ such that $F(\ek_1)=j$ and $[\ek_1\ldots\ek_\ell]$ is the cylinder set corresponding to the finite path. 
The advantage of working with 2-sided Bratteli diagrams, which is one of the key ideas of \cite{Buf-umn}, is that one can similarly define the
``dual'' measure $\nu_-$ on the set of ``negative paths'' $X_-$, invariant under the ``past tail'' equivalence relation. Then $\nu = \nu_+\times \nu_-$ is a probability measure on $X$.

Now suppose that a linear ordering (Vershik's ordering) is defined on the set $\{\ek\in \Ek(\Gam_\ell):\ I(\ek) = i\}$ for all $i\le m$ and
$\ell\in \Z$. This induces a partial lexicographic ordering $\ok$ on the Markov compactum $X$; more precisely, two paths are comparable if they agree for $n\ge t$ for some $t\in \Z$. 
(Also two paths in $X_-$ are comparable if they end at the same vertex.) Restricting this ordering to the 1-sided compactum $X_+$, we obtain the {\em adic}, or
{\em Bratteli-Vershik} (BV) transformation ${\mathfrak T}$, defined as the immediate successor of a path $\ov{\ek}$ in the ordering $\ok$. (See also the work of S.\ Ito \cite{ito}, where a similar construction had appeared earlier.) Let $\Max(\ok)$ be the set of paths in $X_+$ such that its every edge is maximal. It is easy to see that $\card(\Max(\ok))\le m$. Similarly define the set of minimal paths $\Min(\ok)$,
and let $\wtil{X}_+$ be the set of paths, which are not tail equivalent to any of the paths in $\Min(\ok)\cup
\Max(\ok)$. Then the $\Z$-action $\{\Tf^n\}_{n\in \Z}$ is well-defined on $\wtil{X}_+$. Since we are 
excluding a countable set, the action is defined almost everywhere with respect to any non-atomic measure;
certainly, $\nu_+$-a.e.\ in the uniquely ergodic case. We similarly define  $\wtil{X}$ as the set of bi-infinite paths in $X$ which are not forward tail-equivalent to any of the maximal or minimal paths.
Note that invariant measures for the future tail equivalence relation are precisely the invariant measures for the BV map, so we get unique ergodicity of the system $(X_+,\Tf)$ under our standing assumption.
An easy, but important, fact is that the construction of telescoping/aggregation preserves the Vershik ordering, and the
corresponding BV-systems are isomorphic. 

We shall also consider suspension flows over BV-systems, with a piecewise-constant roof function
depending only on the vertex at the level 0. More precisely, let $X_+$ be a one-sided Markov compactum with a Vershik ordering and BV-transformation $\Tf$.
For a strictly positive vector $\vec{s} = (s_1,\ldots,s_m)$ define the roof function $\phi_{\vec{s}}$ to be  equal to $s_j$ on the cylinder set $X_j^+$. The resulting space will be denoted $\Xx^{\vec{s}}$:
$$
\Xx^{\vec{s}}:= \bigsqcup_{j=1}^m X_j^+\times [0,s_j] \ \mbox{\bf \large /}\!\!\sim\,,\ \ \ \mbox{with}\ \ 
(\ek,\phi_{\vec{s}}(\ek))\sim (\Tf\ek,0),
$$
on which we consider the usual suspension flow $\{h_t\}_{t\in \R}$.
It preserves the measure induced by the $\Tf$-invariant measure $\nu_+$ on $X_+$ and the Lebesgue measure on $\R$.
We will need the following

\begin{lemma}\cite[Section 2.5]{Buf-umn} \label{symb-flow}
Given a Vershik ordering on a uniquely ergodic Markov compactum $X$ with the unique invariant measure
$\nu$, there is a symbolic flow 
$(X,h_t^+)_{t\in \R},$ defined on $\wtil{X}$, so 
  $\nu$-a.e.\ on $X$, which is measurably conjugate to the suspension flow $(\Xx^{\vec{s}},h_t)_{t\in \R}$ over $(X_+, \Tf)$, with $s_j=\uf_j^{(1)},\ j\le m$. Moreover , the conjugating map $\Ff:\,X\to \Xx^{\vec{s}}$ is given by
$$
\Ff(\ov{\ek}) = (P_+\ov{\ek},t),\ \ \ \mbox{where}\ \ 
t = \nu_-(\{\ak\in X_-:\ I(\ak_0) = F(\ek_1),\ \ak \preccurlyeq P_-\ek\}),
$$
where $P_+, P_-$ are the truncations from $X$ to $X_+$, $X_-$ respectively and $\preccurlyeq$ is the 
Vershik order. The map $\Ff$ is well-defined on $\wtil{X}$ and its inverse is well-defined over $\wtil{X}_+$.
\end{lemma}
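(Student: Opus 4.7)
The plan is to define the symbolic flow $h_t^+$ on $\wtil{X}$ as the pull-back of the suspension flow under the explicit map $\Ff$, and then verify that $\Ff$ is a measure-theoretic isomorphism from $(\wtil{X}, \nu)$ onto the natural suspension measure space over $(\wtil{X}_+,\nu_+)$, with measurable inverse. The work splits into three pieces: well-definedness of $\Ff$ on $\wtil{X}$, measure preservation, and a.e.\ invertibility. The intertwining $\Ff\circ h_t^+=h_t\circ\Ff$ then holds by construction.

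For well-definedness, I would fix $\ov{\ek}\in\wtil{X}$, set $j:=F(\ek_1)=I(\ak_0)$ with $\ak:=P_-\ov{\ek}$, and consider
\[
\Vk(\ak) := \{\ak'\in X_-:\ I(\ak'_0)=j,\ \ak'\preccurlyeq \ak\}.
\]
The dual of (\ref{meas1}) for $\nu_-$ gives that the full fiber $\{\ak'\in X_-:I(\ak'_0)=j\}$ has $\nu_-$-measure exactly $\uf^{(1)}_j=s_j$, so $t:=\nu_-(\Vk(\ak))\in[0,s_j]$ and $(P_+\ov{\ek},t)\in\Xxs$. The key observation is that the assignment $\ak\mapsto\nu_-(\Vk(\ak))$ is an order isomorphism from this fiber, equipped with the Vershik ordering (which becomes a total order once one removes the tail orbits of $\Min(\ok)\cup\Max(\ok)$ on $X_-$, a $\nu_-$-null set by the dual of (\ref{equiv2})), onto $[0,s_j]$; moreover, it pushes the restriction of $\nu_-$ to this fiber forward to Lebesgue measure on $[0,s_j]$.

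Combining this with the product decomposition $\nu=\nu_+\otimes\nu_-$, the pushforward $\Ff_*\nu$ equals the natural suspension measure on $\Xxs$ (namely $\nu_+$ on the base times Lebesgue on the roof intervals). For invertibility, the same order isomorphism lets one reconstruct $P_-\ov{\ek}$ uniquely from $(P_+\ov{\ek}, t)$: it is the element of the fiber over $F(\ek_1)$ whose cumulative $\nu_-$-measure equals $t$, well-defined for all $t$ outside a Lebesgue-null set coming from the cell boundaries and the excluded tail orbits. The inverse $\Ff^{-1}$ is therefore defined and measurable on the suspension over $\wtil{X}_+$, and setting $h_t^+:=\Ff^{-1}\circ h_t\circ \Ff$ defines the claimed symbolic flow.

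The main technical obstacle is the bookkeeping around the orbits of $\Min(\ok)\cup\Max(\ok)$ and the boundaries between cells of the suspension, where the Vershik partial order fails to be total on its comparable classes and where the defining formula for $\Ff$ becomes ambiguous; this is precisely what forces the restriction to the domains $\wtil{X}$ and $\wtil{X}_+$. Two ingredients keep this under control: non-atomicity of $\nu_\pm$, so that the exceptional sets are null (guaranteed by the standing assumption via (\ref{equiv2}) and its dual, which imply cylinder diameters shrink to zero in measure), and the consistency of the Vershik ordering with the cylinder structure at every finite level, which ensures that after the exclusions the order isomorphism above is well-posed with measurable inverse.
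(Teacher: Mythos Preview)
The paper does not supply its own proof of this lemma: it is stated with a citation to \cite[Section 2.5]{Buf-umn} and used as a black box thereafter. So there is no in-paper argument to compare your proposal against.

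That said, your outline is the standard route and is essentially correct. A few points worth tightening if you intend this as a self-contained proof. First, the claim that $\ak\mapsto\nu_-(\Vk(\ak))$ is an \emph{order isomorphism} onto $[0,s_j]$ is slightly overstated: what you actually need (and what holds) is that this map is monotone, continuous (by non-atomicity of $\nu_-$), and has range all of $[0,s_j]$; the last point requires that $\nu_-$ charge every cylinder in the fiber, which follows from strict positivity of the vectors $\vec{\uf}^{(\ell)}$ under the standing assumption. Second, injectivity off a null set needs that distinct comparable paths in the fiber are separated by a cylinder of positive $\nu_-$-measure, which again uses that all cylinder measures are positive. Third, you have not said why the pulled-back flow $h_t^+$ lands back in $\wtil{X}$; this is because the suspension flow preserves the set over $\wtil{X}_+$ up to the identifications at the roof, and those identifications correspond exactly to applying $\Tf$, which stays inside $\wtil{X}_+$. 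None of these are gaps in the approach, just places where the sketch would need a sentence or two more to be complete.
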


\subsection{Weakly Lipschitz functions.}
Following \cite{Bufetov1, Buf-umn}, we consider the space of {\em weakly Lipschitz functions}
on a uniquely ergodic Markov compactum $X$ with the  probability measure $\nu_+$ invariant for the ``forward tail" equivalence relation and a Vershik ordering $\ok$. Recall that $\wtil{X}$ denotes the set of paths in $x$ that are not
(forward) tail equivalent to any of the maximal or minimal paths in the Vershik ordering.
We say that $f$ is weakly Lipschitz and write
$f\in \Lip_w^+(X)$ if $f$ is defined and bounded on $\wtil{X}$, and there exists $C>0$ such that for all $\ov{\ek},\ov{\ek}'\in \wtil{X}$, satisfying
$\ek_k = \ek'_k$ for $-\infty<k\le n$, with $n\in \N$, we have
\be \label{LL1}
|f(\ov{\ek})-f(\ov{\ek}')|\le C \nu_+([\ek_1\ldots \ek_n]).
\ee
The norm in $\Lip_w^+(X)$ is defined by
\be \label{Lip-norm}
\|f\|_L:= \|f\|_{\sup} + \wtil{C},
\ee
where $\wtil{C}$ is the infimum of the constants in (\ref{LL1}). Note that a weakly Lipschitz function is mapped into a weakly Lipschitz function when telescoping of the diagram is performed, and this does not increase the norm $\|\cdot\|_L$.

We analogously define the space $\Lip_w^+(\Xx^{\vec{s}})$ of weakly Lipschitz functions on the 
space $\Xx^{\vec{s}}$ of the suspension flow over $(X_+,\Tf)$ with the roof function determined by the 
vector $\vec{s}\in \R^m_+$. Namely, $f\in \Lip_w^+(\Xx^{\vec{s}})$ if it is defined and bounded on $\wtil{X}_+$ and
there exists $C>0$ such that 
for all $\ov{\ek},\ov{\ek}'\in \wtil{X}_+$, 
satisfying
$\ek_k = \ek'_k$ for $k\le n$, with $n\in \N$,  and all $t\in [0, s_{F(\ek_1)}]$ we have
\be \label{LL2}
|f(\ov{\ek},t)-f(\ov{\ek}',t)|\le C \nu_+([\ek_1\ldots \ek_n]).
\ee
The norm $\|f\|_L$ is defined as in (\ref{Lip-norm}). 

Note that weakly Lipschitz functions are not 
Lipschitz in the ``transverse'' direction, corresponding to the ``past'' in the 2-sided Markov compactum and to the vertical direction in the space of the suspension flow. Note also that if $f\in \Lip_w^+(X)$, then
$f\circ \Ff^{-1}\in \Lip_w^+(\Xx^{\vec{s}})$, with the same norm, where $\Ff$ is defined in Lemma~\ref{symb-flow}.

\subsection{Substitutions.}
Along with the Markov compactum and BV-transformation, it is convenient to use the language of {substitutions} (see e.g.\ \cite{Fogg} for further background). Consider the
alphabet $\Ak = \{1,\ldots,m\}$, which is identified with the vertex set of all the graphs $\Gam_n$. A {\em substitution}  is a map $\zeta:\,\A\to \A^+$, extended to  $\A^+$ and $\A^{\N}$ by concatenation. Given a Vershik ordering $\ok$ on a 1-sided Bratteli diagram
$\{\Gam_{j}\}_{j\ge 1}$,  the substitution $\zeta_j$ takes every $b\in \A$ into the word in $\A$ corresponding to all the vertices to which
there is a $\Gam_{j}$-edge starting at $b$, in the order determined by $\ok$. Formally, the {\em length} of the word $\zeta_j(b)$ is
$$
|\zeta_j(b)| = \sum_{a=1}^m A_{b,a}(\Gam_j),
$$
and the substitution itself is given by 
\be \label{def-subs1}
\zeta_j(b) =u_1^{b,j}\ldots u_{|\zeta_j(b)|}^{b,j},\ \ \ b\in \A,\ \ j\ge 0,
\ee
where $(b, u_i^{b,j})\in \Ek(\Gam_j)$, listed in the linear order prescribed by $\ok$.
Substitutions, extended to $\Ak^+$, can be composed in the usual way as transformations $\Ak^+\to \Ak^+$. We will use the notation
$$
\zeta^{[n_1,n_2]} := \zeta_{n_1} \circ \cdots \circ \zeta_{n_2},\ \ n_2 \ge n_1,
$$
and
$$
\zeta^{[n]} := \zeta^{[1,n]},\ \ n\ge 1.
$$
Given a substitution $\zeta$, its {\em substitution matrix} is defined by
$$
\Sf_\zeta(a,b):= \# \mbox{\ symbols $a$ in $\zeta(b)$}.
$$
Observe that $\Sf_{\zeta_1\circ \zeta_2} = \Sf_{\zeta_1}\Sf_{\zeta_2}$. We will denote $\Sf_n = \Sf_{\zeta_n}$. Notice that the transpose 
$\Sf_n^t$ is exactly the incidence matrix $A_n = A(\Gam_n)$ by the definition of  $\zeta_n$:
\be \label{incid}
\Sf_{\zeta_n}^t = A(\Gam_n). 
\ee
 We will use the notation
$$
\Sf^{[n_1,n_2]}:= \Sf_{\zeta^{[n_1,n_2]}}\ \ \ \mbox{and}\ \ \ \Sf^{[n]}:= \Sf_{\zeta^{[n]}}.
$$

Next, we associate to any $\ov{\ek}\in X_+$, its ``horizontal sequence'' in the alphabet $\Ak$, defined by
$$
\hf(\ov{\ek}):=x = (x_n)_n,\ \ \mbox{where}\ \ x_n=x_n(\ov{\ek})= b\ \ \mbox{whenever}\ F(\Tf^n(\ov{\ek})_1)=b,\ \ n\in \Z,
$$
that is, we keep track of the vertex at level zero, applying the BV-transformation.
The horizontal sequence is just the symbolic dynamics of $\Tf$ with respect to the 0-level cylinder partition. We get a full 2-sided infinite sequence $\hf(\ov{\ek})=x=(x_n)_{n\in \Z}$ whenever the (2-sided) orbit of $\ov{\ek}$ under $\Tf$ does not hit a maximal
or a minimal path. (By our assumptions, no orbit of $\Tf$ can hit both.) Thus $\hf: \wtil{X}_+\to \Ak^\Z$ is well-defined. We obtain, by definition, the following commutative diagram:
$$
\begin{CD}
\wtil{X}_+ @>{\Tf}>> \wtil{X}_+ \\
@V{\hf}VV @VV{\hf}V \\
\Ak^\Z @>T>> \Ak^\Z
\end{CD}
$$
where $T$ is the left shift on $\Ak^\Z$. 
Of course, the map $\hf$ is far from being surjective. In order to understand its image, it is useful
to have a more explicit algorithm for $\hf(\ek)$. Suppose that the path $\ek\in X_+$ goes through the vertices $b_0, b_1,\ldots$, that is,
$b_n = F(\ek_{n+1})$. Recalling the definition of the substitutions $\zeta_n$ we can write
\be \label{repka1}
\zeta_n(b_{n}) = u_{n-1} b_{n-1} v_{n-1},\ \ n\ge 1,
\ee
where $u_{n-1}$ and $v_{n-1}$ are words, possibly empty. Note that there may be more than one occurrence of $b_{n-1}$ in $\zeta_n(b_{n})$, but
we choose the representation (\ref{repka1}) according to the edge $\ek_n$. 
Consider the following sequence of words $U_n$, $n\ge 0$, defined inductively.
We start with 
$$
U_0 = u_0\mbox{\bf .}b_0 v_0,
$$
where the dot {\bf .}\ separates negative and positive coordinates. Then $U_{n+1}$ is obtained from $U_n$ inductively, by appending $\zeta_{n+1}(u_{n+1})$  from
the left and $\zeta_{n+1}(v_{n+1})$ from the right.
If we disregard the location of the dot, we simply have 
$$U_n = \zeta_1 \circ \cdots \circ \zeta_{n+1}(b_{n+1}) = \zeta^{[n+1]}(b_{n+1}),\ \ n\ge 0.
$$
When we take the location of the dot into account, typically, the words $U_n$ will ``grow'' to infinity, both left and right, to a limiting 2-sided sequence, which is exactly $\hf(\ek)$:
\be \label{hff}
\hf(\ek) = \ldots \zeta_2(u_2)\zeta_1(u_1)u_0{\mbox{\bf .}}b_0v_0\zeta_1(v_1)\zeta_2(v_2)\ldots 
\ee
 The other alternative is that it grows to infinity only on one side, which happens if and only if $\ek$ is
tail equivalent to either minimal or a maximal path. Denote
$$
\Yk:={\rm clos}(\hf(\wt{X}_+)),
$$
where ``clos'' denotes the closure in the product topology.
Now the following is clear.

\begin{lemma} \label{lem-image}
The space $\Yk\subset \Ak^\Z$ is exactly the set of 2-sided sequences $x$ with the property that any subword of $x$ appears as a subword of $\zeta^{[n]}(b)$ for some $b\in \Ak$ and $n\ge 1$.
\end{lemma}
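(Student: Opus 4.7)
The claim is the equality of two subshifts of $\Ak^\Z$, so the natural approach is to prove each inclusion separately, using the explicit formula (\ref{hff}) for $\hf(\ek)$ and the telescoped construction of the words $U_n=\zeta^{[n+1]}(b_{n+1})$.

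\textbf{Step 1 ($\Yk$ is contained in the subword shift).} Let $x\in\Yk$. Then there exists a sequence $\ek^{(k)}\in\wt X_+$ with $\hf(\ek^{(k)})\to x$ in the product topology. Fix any finite subword $w$ of $x$, occurring at positions $[-N,N]$, say. For all sufficiently large $k$, $w$ also occurs at positions $[-N,N]$ of $\hf(\ek^{(k)})$. By (\ref{hff}), the two-sided sequence $\hf(\ek^{(k)})$ is a limit, in the product topology, of the centered words $U_n^{(k)}=\zeta^{[n+1]}\!\bigl(b^{(k)}_{n+1}\bigr)$, so for $n$ large enough $w$ lies inside $U_n^{(k)}$. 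This realises $w$ as a subword of $\zeta^{[N']}(b)$ with $N'=n+1$ and $b=b^{(k)}_{n+1}$, as required.

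\textbf{Step 2 (the subword shift is contained in $\Yk$).} Conversely, suppose every finite subword of $x$ appears in some $\zeta^{[N]}(b)$. Given $N\ge1$, write $w=x_{-N}\ldots x_N$ and choose $n\ge1$, $b\in\Ak$, and an integer $p$ such that $w$ coincides with the subword of $\zeta^{[n]}(b)$ at positions $[p,p+2N]$, with $x_0$ at position $p+N$. The main task is to decode this occurrence into a finite path in the Bratteli diagram. Peel off the substitutions one level at a time: in $\zeta^{[n]}(b)=\zeta_1\!\bigl(\zeta^{[2,n]}(b)\bigr)$, position $p+N$ sits inside the image of a unique letter $b_1$ of $\zeta^{[2,n]}(b)$ through $\zeta_1$. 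Writing $\zeta_1(b_1)=u_0 b_0 v_0$ where $b_0=x_0$ singles out an edge $\ek_1\in\Ek(\Gam_1)$ with $I(\ek_1)=b_1$, $F(\ek_1)=b_0$, namely the $(|u_0|+1)$-st edge in Vershik order leaving $b_1$. Iterating gives letters $b_1,\ldots,b_n=b$ and decompositions $\zeta_{k+1}(b_{k+1})=u_k b_k v_k$ in the sense of (\ref{repka1}), hence edges $\ek_1,\ldots,\ek_n$ forming an initial segment of a path in $X_+$.

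\textbf{Step 3 (extension and matching).} Extend $(\ek_1,\ldots,\ek_n)$ to a full path $\ek\in X_+$ by arbitrarily choosing $\ek_{n+1},\ek_{n+2},\ldots$; the Standing Assumption supplies infinitely many occurrences of a graph $\Gam$ with strictly positive incidence matrix, so such an extension always exists, and we may further insist that the resulting edge sequence is neither eventually maximal nor eventually minimal (there are only finitely many maximal and minimal paths), which places $\ek$ in $\wt X_+$. By construction, the words $U_k$ built from this $\ek$ satisfy $U_{n-1}=\zeta^{[n]}(b_n)=\zeta^{[n]}(b)$, with dot located at position $p+N$. Comparing with (\ref{hff}) and tracing letters at positions $-N,\ldots,N$ on both sides shows that $\hf(\ek)_i=x_i$ for $|i|\le N$. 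Letting $N\to\infty$ and passing to a diagonal subsequence $\ek^{(N)}$ gives $\hf(\ek^{(N)})\to x$ in the product topology, hence $x\in\Yk$.

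\textbf{Expected difficulty.} Steps 1 and 3 are essentially bookkeeping. The genuine content is the layer-by-layer decoding in Step 2 together with the verification in Step 3 that the positions dictated by the recursive formula $U_{n+1}=\zeta^{[n+1]}(u_{n+1})\,U_n\,\zeta^{[n+1]}(v_{n+1})$ line up exactly with the ambient positions in $\zeta^{[n]}(b)$; getting the offsets and the Vershik-order identification of edges right is the only point where careful index tracking is needed. The existence of a legal extension in $\wt X_+$ is where the Standing Assumption enters, and it is the only hypothesis required beyond the definitions.
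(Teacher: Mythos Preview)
Your proposal is correct and follows the natural approach implicit in the paper's construction. The paper itself offers no proof beyond the phrase ``Now the following is clear,'' relying on the reader to see that the explicit formula~(\ref{hff}) and the telescoped words $U_n=\zeta^{[n+1]}(b_{n+1})$ make both inclusions evident; your three steps spell out exactly this argument in full detail, including the layer-by-layer decoding of a position in $\zeta^{[n]}(b)$ into a finite Bratteli path and the use of the Standing Assumption to extend into $\wt X_+$.
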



\noindent {\bf Remark.} Dynamical systems $(\Yk,T)$ have been studied under the name of {\em $S$-adic systems}.
They were originally introduced by S. Ferenczi \cite{Feren}, with the additional assumption that there are finitely many different substitutions in the sequence $\{\zeta_j\}$; however, more recently this restriction has been removed, see e.g.\ \cite{BST}.

\medskip

Let $h^{(n)}_i$ be the number of finite paths $\ek_1,\ldots,\ek_n$ such that $I(\ek_n)=i$. (This is the height of the Rokhlin tower for the BV-map.) We get a sequence of real vectors
$\vec{h}^{(n)}$ which satisfy the equations:
\be \label{equiv11}
\vec{h}^{(n+1)} = A_{n+1} \vec{h}^{(n)} = \Sf_{n+1}^t \vec{h}^{(n)},\ \ \ h^{(n)}_i = |\zeta^{[n]}(i)|,\ \ n\ge 1.
\ee

\medskip

Let $\sif$ be the left shift transformation on the 1-sided Markov compactum: $\sif(\ek_1,\ek_2,\ek_3,\ldots) = (\ek_2,\ek_3,\ldots)$. We thus obtain a sequence of Markov compacta
$X_+^{(\ell)}$ for $\ell\ge 0$, with $X_+^{(0)} := X_+$, so that $\sif:\,X_+^{(\ell)}\to X_+^{(\ell+1)}$ for all $\ell$. The Vershik ordering and BV-transformation are
naturally induced on the whole family. We can then consider the horizontal sequence map
$$
\hf:\,\wt{X}_+^{(\ell)}\to \Ak^\Z.
$$
Its image, denoted by $\Yk^{(\ell)}$, is described similarly to $\Yk=\Yk^{(0)}$, as the set of all sequences in $\Ak^\Z$ whose every subword occurs as a block in 
$\zeta^{[\ell+1,n]}(b)$ for some $n\ge \ell+1$ and $b\in \Ak$.

\medskip

A substitution $\zeta$ acts on $\Ak^\Z$ as follows:
$$
\zeta(\ldots a_{-1}a_0\mbox{\bf .}a_1\ldots) = \ldots \zeta(a_{-1})\zeta(a_0)\mbox{\bf .}\zeta(a_1)\ldots
$$
Definitions imply that we have a sequence of surjective maps
$
\zeta_\ell:\,\Yk^{(\ell)} \to \Yk^{(\ell-1)},\ \ \ell\ge 1.
$
It follows from definitions (and the explicit formulas for $\hf$) that
$$
x = T^{k-1}\zeta_1(x'),
$$
where $k$ is the number (rank) of the edge $\ek_0$ in the Vershik ordering. Of course, similar formulas relate $\hf(\ek)$ and $\hf(\sif\ek)$ for $\ek\in X_+^{(\ell)}$.
(Recall that $T$ denotes the left shift on $\Ak^\Z$.)


\section{Estimating the growth of exponential sums and matrix products}

We use the following convention for the Fourier transform of functions and measures: given $\psi\in L^1(\R)$ we set
$
\widehat{\psi}(t) = \int_\R e^{-2\pi i \om t} \psi(\om)\,d\om,
$
and for a  probability measure $\nu$ on $\R$ we let
$
\widehat{\nu}(t) = \int_\R e^{-2\pi i \om t}\,d\nu(\om).
$

\subsection{Spectral measures and twisted Birkhoff integrals}
Since our goal is to obtain estimates of spectral measures, we recall how they are defined for flows.
Given a measure-preserving flow
$(Y, h_t,\mu)_{t\in \R}$ and a test function $f\in L^2(Y,\mu)$, 
there is a finite positive Borel measure $\sig_f$ on $\R$ such that
$$
\widehat{\sig}_f(-\tau)=\int_{-\infty}^\infty e^{2 \pi i\om \tau}\,d\sig_f(\om) = \langle f\circ h_\tau, f\rangle\ \ \ \mbox{for}\ \tau\in \R.
$$
In order to obtain local bounds on the spectral measure, we can use growth estimates of the ``twisted Birkhoff integral''
\be \label{twist1}
S_R^{(y)}(f,\om) := \int_0^R e^{-2\pi i \om \tau} f\circ h_\tau(y)\,d\tau.
\ee
The following lemma is standard; a proof may be found in \cite[Lemma 4.3]{BuSol2}.

\begin{lemma} \label{lem-varr} Suppose that for some fixed $\om \in \R$, $R_0>0$, and $\alpha \in (0,1)$ we have
\be \label{L2est}
\left\|S_R^{(y)}(f,\om)\right\|_{L^2(Y,\mu)}\le C_1R^\alpha\ \ \mbox{for all}\ R\ge R_0.
\ee
Then 
\be \label{locest}
\sig_f([\om-r,\om+r]) \le \pi^2 2^{-2\alpha} C_1^2 r^{2(1-\alpha)}\ \ \mbox{for all}\ r \le (2R_0)^{-1}.
\ee
\end{lemma}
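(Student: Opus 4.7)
\medskip

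\noindent\textbf{Proof proposal for Lemma 3.1.}
The plan is to pass from the $L^2$ bound on the twisted Birkhoff integral to a bound on the spectral measure by producing a Fej\'er-type kernel against $\sig_f$ and then lower-bounding that kernel on the interval $[\om-r,\om+r]$. The optimization $R\leftrightarrow r$ at the end is $R = 1/(2r)$, which matches the threshold $r\le (2R_0)^{-1}$ in the statement.

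First I would expand the squared $L^2$-norm via Fubini:
$$
\bigl\|S_R^{(y)}(f,\om)\bigr\|^2_{L^2(Y,\mu)}
= \int_0^R\!\!\int_0^R e^{-2\pi i \om(\tau-\tau')}\,\langle f\circ h_\tau,f\circ h_{\tau'}\rangle\,d\tau\,d\tau'.
$$
By invariance of $\mu$, the inner product equals $\langle f\circ h_{\tau-\tau'},f\rangle = \widehat{\sig}_f(\tau'-\tau)$, using the convention stated at the beginning of Section~3. Substituting the definition $\widehat{\sig}_f(\tau'-\tau)=\int_\R e^{-2\pi i\xi(\tau'-\tau)}\,d\sig_f(\xi)$ and interchanging the order of integration (which is justified since the total integrand is integrable) yields
$$
\bigl\|S_R^{(y)}(f,\om)\bigr\|^2_{L^2(Y,\mu)}
= \int_\R \Bigl|\int_0^R e^{-2\pi i(\om-\xi)\tau}\,d\tau\Bigr|^2 d\sig_f(\xi)
= \int_\R \frac{\sin^2(\pi(\om-\xi)R)}{\pi^2(\om-\xi)^2}\,d\sig_f(\xi).
$$

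Next I would exploit the pointwise lower bound $\frac{\sin x}{x}\ge \frac{2}{\pi}$ on $|x|\le \pi/2$. For $\xi\in[\om-r,\om+r]$ with $r\le 1/(2R)$, we have $\pi|\om-\xi|R\le \pi/2$, hence
$$
\frac{\sin^2(\pi(\om-\xi)R)}{\pi^2(\om-\xi)^2} = R^2\left(\frac{\sin(\pi(\om-\xi)R)}{\pi(\om-\xi)R}\right)^{\!2}\ge \frac{4R^2}{\pi^2}.
$$
Restricting the integral to $[\om-r,\om+r]$ and combining with the hypothesis \eqref{L2est} gives
$$
\frac{4R^2}{\pi^2}\,\sig_f([\om-r,\om+r]) \le \bigl\|S_R^{(y)}(f,\om)\bigr\|^2_{L^2}\le C_1^2 R^{2\alpha},
$$
so $\sig_f([\om-r,\om+r])\le \tfrac{\pi^2}{4}C_1^2\, R^{2\alpha-2}$.

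Finally, I would set $R = 1/(2r)$; since $r\le (2R_0)^{-1}$ this choice satisfies $R\ge R_0$, so \eqref{L2est} applies, and we obtain
$$
\sig_f([\om-r,\om+r]) \le \tfrac{\pi^2}{4}C_1^2\,(2r)^{2-2\alpha} = \pi^2\,2^{-2\alpha}\,C_1^2\, r^{2(1-\alpha)},
$$
which is exactly \eqref{locest}. There is no genuine obstacle here; the only delicate point is keeping track of the constants so as to match the clean form of the bound (the Fubini step is the one that needs a brief justification, and the constant $2/\pi$ in the sine inequality is what produces the factor $2^{-2\alpha}$ after the optimization).
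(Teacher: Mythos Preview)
Your proof is correct and follows the standard Fej\'er-kernel argument; the paper does not give its own proof but defers to \cite[Lemma~4.3]{BuSol2}, where the same computation appears. Your bookkeeping of constants is accurate and yields the stated bound exactly.
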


\begin{remark} \label{rem-Forni} {\em 1. Since $(Y,\mu)$ is a probability space, the $L^2$-estimate (\ref{L2est}) obviously follows from a uniform estimate. 
We only need $L^2$ estimates for the proof of our main result. Nonetheless, we expect that the uniform estimates of this paper  would have further applications.

2. Estimates of twisted Birkhoff sums have been used for a number of different dynamical systems recently; in particular, see the work of Forni and Ulcigrai \cite{FU} on the Lebesgue spectrum for smooth time changes of the horocycle flow.

3. For $\om=0$ the expression (\ref{twist1}) reduces to the usual Birkhoff integral, for which sharp estimates and asymptotics are known (under the assumption that the test function $f$ has zero mean) in a number of cases. 
It should be possible to obtain precise asymptotics of the spectral measure at zero for almost every translation flow in the context of Theorem~\ref{main-moduli}, even for a general stratum in genus $g>1$. We expect that it is 
governed by the second Lyapunov exponent, based on the results of \cite{Bufetov1}, analogously to \cite[Theorem 6.2]{BuSol2}

4. It is easy to see that $\left\|S_R^{(y)}(f,\om)\right\|_{L^2(Y,\mu)} = O(R)$, and $o(R)$ indicates the absence of an eigenvalue at $\om$. The exponent $\alpha = 1/2$ in (\ref{L2est}) corresponds, in some sense, to the Lebesgue
spectrum (this is made precise at zero for self-similar translation flows with periodic renormalization in \cite[Theorem 6.2]{BuSol2}). Whereas $\alpha <1/2$ is possible at some points, for $\sig_f$-a.e.\ $\om$, the infimum of $\alpha$, for which (\ref{L2est}) holds, is at least $1/2$. Indeed, (\ref{locest}) implies that
$\loden(\sig_f,\om) = \liminf_{r\to 0} \frac{\log\sig_f (B(\om,r))}{\log r} \ge 2(1-\alpha)$, and it is well-known that
$$
\dim_H(\sig_f) = \sup \{s:\ \loden(\sig_f,\om) \ge s\ \mbox{for} \ \sig_f\mbox{-a.e.}\ \om\} \le 1, 
$$
see e.g.\ \cite[Prop.\ 10.2]{Falc-book}.
}

\end{remark}

\subsection{Exponential sums corresponding to suspension flows}
Let $X_+$ be a one-sided Markov compactum with a Vershik ordering and BV-transformation $\Tf$.
For a strictly positive vector $\vec{s} = (s_1,\ldots,s_m)$ we define the roof function $\phi_{\vec{s}}$ to be  equal to $s_a$ on the cylinder set $X_a^+$, as in Section 2, and obtain the suspension flow 
 $(\Xx^{\vec{s}},h_t)$. 

Recall that for $\ov{\ek}\in X_+$ (minus  a countable exceptional set) we defined its horizontal sequence $\hf(\ov{\ek})=(x_n)_{n\in \Z}$, in such a way
that the BV-transformation intertwines the left shift. Similarly, we can associate to $(\ov{\ek},t) \in \Xxs$  a tiling of the line $\R$: a symbol $a$
corresponds to a closed line segment of length $s_a$ (labeled by $a$), and these line segments are ``strung together'' according to the 
symbolic sequence $\hf(\ov{\ek})$. The tile corresponding to $x_0$ should contain the origin at the distance $t$ from the left endpoint. This
defines a map $\wt{\hf}$ from $\Xxs$ to a ``tiling space,'' which intertwines the flow $h_\tau$ and the left shift by $\tau$.

Our goal is to obtain growth estimates for twisted Birkhoff integrals (\ref{twist1}).
We start with test functions depending only on the cylinder set $X_a$ and the height $t$. More precisely, given some functions
 $\psi_a\in C([0,s_a])$, $a\in \A$, let
$$
f = \sum_{a\in \A} c_a f_a,\ \ \ \mbox{with}\ \ f_a(\ov{\ek},t) = \One_{\Xx_a} \psi_a(t),\ \ \mbox{where}\ \ 
\Xx_a=X_a\times [0,s_a].
$$
For a word $v$ in the alphabet $\Ak$ denote by $\vec{\ell}(v)\in \Z^m$ its ``population vector'' whose $j$-th entry is the number of $j$'s in $v$, for $j\le m$. We will  need the
``tiling length'' of $v$ defined by 
\be \label{tilength}
|v|_{\vec{s}}:= \langle\vec{\ell}(v), \vec{s}\rangle.
\ee
The following property will be used frequently: for an arbitrary substitution $\xi$, $\vec{s}>0$, and word $U\in \Ak^+$ we have
\be \label{tilep}
|U|_{\Sf_\xi^t \vec{s}} = \langle \vec{\ell}(U),\Sf_\xi^t \vec{s}\rangle = \langle \Sf_\xi\vec{\ell}(U), \vec{s}\rangle = \langle \vec{\ell}(\xi(U)), \vec{s}\rangle = |\xi(U)|_{\vec{s}}.
\ee

For $v=v_0\ldots v_{N-1}\in \Ak^+$ 
\be \label{def-Phi3}
\Phi_a^{\vec{s}}(v,\om) = \sum_{j=0}^{N-1} \delta_{v_j,a} \exp(-2\pi i \om |v_0\ldots v_{j-1}|_{\vec{s}}),
\ee
where the term for $j=0$ is equal to one by convention.
Then a straightforward calculation shows
\be \label{SR1}
S_R^{(\ov{\ek},0)}(f_a,\om) = \widehat{\psi}_a(\om) \cdot {\Phi_a^{\vec{s}}(x[0,N-1],\om)}\ \ \ \mbox{for}\ \ R = \left|x[0,N-1]\right|_{\vec{s}},
\ee
where 
$(x_n)_{n\in \Z}=\hf(\ov{\ek})$.
Moreover, the horizontal sequence can be represented as a concatenation of long blocks of the form $\zeta^{[n]}(b),\ b\in \Ak$ (not necessarily starting at the 0-th position); therefore, estimates of twisted Birkhoff sums for an arbitrary sequence may be reduced to those for $\zeta^{[n]}(b)$, and for the latter the renormalization naturally leads to matrix products. This is what we do next.

\subsection{Setting up matrix products}
Observe that for any two words $u,v$ and the concatenated word $uv$ we have
 \be \label{eq-Phi}
 \Phi^{\vec{s}}_\a(uv,\om) = \Phi^{\vec{s}}_\a(u,\om) + e^{-2\pi i \om |u|_{\vec{s}}} \cdot\Phi^{\vec{s}}_\a(v,\om).
 \ee
Recalling (\ref{def-subs1}), we can write
$$\zeta^{[n]}(b)= \zeta^{[n-1]}(\zeta_n(b))=\zeta^{[n-1]}(u_1^{b,n})\ldots \zeta^{[n-1]}(u_{|\zeta_n(b)|}^{b,n}),\ n\ge 1,$$ 
where we use the convention $\zeta^{[0]}:= Id$. Hence
(\ref{eq-Phi}) implies for all $a,b\in \A$:
$$
\Phi^{\vec{s}}_a(\zeta^{[n]}(b),\om) = \sum_{j=1}^{|\zeta_n(b)|} \exp\left[-2\pi i \om \left(|\zeta^{[n-1]}(u_1^{b,n}\ldots u_{j-1}^{b,n})|_{\vec{s}}\right)\right] \Phi^{\vec{s}}_a(\zeta^{[n-1]}(u_j^{b,n}),\om),\ n\ge 1,
$$
(for $j=1$, the exponential reduces to $\exp(0)=1$ by definition).
For $n\ge 0$ consider the $m\times m$ matrix-function $\Pbi_n(\om)$ defined by
\be \label{def-Psi}
\Pbi^{\vec{s}}_n(\om) = \left(\begin{array}{ccc} \Phi_1^{\vec{s}} (\zeta^{[n]}(1),\om) & \ldots & \Phi_m^{\vec{s}} (\zeta^{[n]}(1),\om) \\
\ldots\ldots\ldots & \ldots & \ldots\ldots\ldots \\
\Phi_1^{\vec{s}} (\zeta^{[n]}(m),\om) & \ldots & \Phi_m^{\vec{s}} (\zeta^{[n]}(m),\om). \end{array} \right).
\ee
It follows that
\be \label{eq-matr1}
\Pbi^{\vec{s}}_n(\om)=\M^{\vec{s}}_{n}(\om) \Pbi^{\vec{s}}_{n-1}(\om),\ \ n\ge 1,
\ee
where $\M^{\vec{s}}_{n}(\om)$ is an $m\times m$ matrix-function, whose matrix elements  are trigonometric polynomials  given by
\be \label{matr}
(\M^{\vec{s}}_{n}(\om))(b,c) = \sum_{j \le |\zeta_n(b)|:\ u_j^{b,n} = c} \exp\left[-2\pi i \om \left(|\zeta^{[n-1]}(u_1^{b,n}\ldots u_{j-1}^{b,n})|_{\vec{s}}\right)\right],\ n\ge 1.
\ee
Note that $\M^{\vec{s}}_{n}(0) =\Sf_{n}^t$, the transpose of the $n$-th substitution matrix, for all $n\ge 1$. 
Since $\Pbi^{\vec{s}}_0(\om) = I$ (the identity matrix), it follows from (\ref{eq-matr1}) that
\be \label{def-Pbi}
\Pbi^{\vec{s}}_n(\om)= \M^{\vec{s}}_{n} (\om)\M^{\vec{s}}_{n-1}(\om)\cdots \M^{\vec{s}}_1(\om).
\ee 

There is another way to express the matrices $\M_n^{\vec{s}}(\om)$ which will be useful below. It follows from (\ref{def-Phi3}), (\ref{matr}), and (\ref{tilep}) that 
$$
\M_n^{\vec{s}}(\om)(b,c) = (\Phi_c)^{\Sf^t_{\zeta^{[n-1]}}\vec{s}}(\zeta_n(b),\om).
$$
This motivates the following definition for two arbitrary substitutions $\xi_1,\xi_2$:
\be \label{motiv0}
\M^{\vec{s}}_{\xi_1,\xi_2}(\om)(b,c) = (\Phi_c)^{\Sf_{\xi_1}^t \vec{s}}(\xi_2(b),\om),
\ee
so that 
\be \label{motiv}
\M^{\vec{s}}_n = \M^{\vec{s}}_{\zeta^{[n-1]},\zeta_n}.
\ee
A straightforward verification yields the following identity for arbitrary substitutions $\xi_1, \xi_2, \xi_3$:
\be \label{verif}
\M^{\vec{s}}_{\xi_1, \xi_2\xi_3} = \M^{\vec{s}}_{\xi_1\xi_2, \xi_3}\cdot \M^{\vec{s}}_{\xi_1,\xi_2}.
\ee

\subsection{Estimating matrix products}

\begin{defi} \label{goodret}
A word $v$ is called a {\bf good return word} for the substitution $\zeta$ if $v$ starts with some symbol $c$ (which can be any element of $\Ak$) and $vc$ occurs in the word $\zeta(b)$ for every $b\in \Ak$. Denote by $GR(\zeta)$ the set of good return words for $\zeta$.
\end{defi}

One of our main assumptions will be that a fixed substitution $\zeta$, with a strictly positive matrix $\Sf_\zeta=:Q$ and nonempty set of good return words, appears infinitely often in the sequence $\zeta_j$. Thus, it is convenient to perform ``telescoping'' of the Bratteli-Vershik diagram and assume from the start that every substitution $\zeta_j$ has the form
\be \label{canonic}
\zeta_j = \zeta \xi_j \zeta,
\ee
where $\xi_j$ is an arbitrary substitution.

Denote by $\one$ the vector of all $1$'s, and let ${\|x\|}_{\R/\Z}$ be the distance from $x\in \R$ to the nearest integer. 
For a strictly positive square $m\times n$ matrix $A$ let
$$
\col(A) = \max_{i,j,k} \frac{A_{ij}}{A_{kj}}.
$$
It is well-known and easy to check that if $Q$ is strictly positive $m\times m$ matrix and $A$ is any non-negative $m\times n$ matrix, then we have 
\be \label{eq-col}
\col(QA) \le \col(Q).
\ee

\begin{prop} \label{prop-Dioph2}
Let $X_+$ be a one-sided Markov compactum with a Vershik ordering, and let $\zeta_j$ be the corresponding sequence of substitutions, given by (\ref{def-subs1}).
Suppose that there exists a substitution $\zeta$ with a nonempty set of good return words, such that $Q=\Sf_\zeta$ is strictly positive and $\zeta_j = \zeta\xi_j \zeta$ for some substitution $\xi_j$ for all $j\ge 1$. Then there exists $c_1\in (0,1)$, depending only on the substitution $\zeta$, such that  for all $a,b\in \Ak$ and $N\in \N$, $\om \in [0,1)$,  and $\vec{s}>\vec{0}$,
\be \label{eq-Dioph}
|\Phi^{\vec{s}}_a(\zeta^{[N]}(b),\om)| \le {\|\Sf^{[N]}\|}_{1}\prod_{n\le N-1} \left( 1 - c_1\cdot\max_{v\in GR(\zeta)} {\|\om|\zeta^{[n]}(v)|_{\vec{s}}\|}^2_{\R/\Z}\right).
\ee
In fact, we can take 
\be \label{defc1}
c_1 = \bigl(2m \cdot \max_{i,j} Q_{ij} \cdot \col(Q^t)\bigr)^{-1}.
\ee 
\end{prop}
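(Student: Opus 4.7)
The plan is to prove by induction on $n \ge 1$ the sharper pointwise estimate
\begin{equation*}
|\Pbi^{\vec{s}}_n(\omega)(b, a)| \le \Sf^{[n]}(a, b) \cdot P_n, \qquad P_n := \prod_{k=1}^{n-1} \bigl( 1 - c_1 \max_{v \in GR(\zeta)} \|\omega|\zeta^{[k]}(v)|_{\vec{s}}\|^2_{\R/\Z} \bigr),
\end{equation*}
from which the proposition is immediate, since $\Phi^{\vec{s}}_a(\zeta^{[N]}(b), \omega) = \Pbi^{\vec{s}}_N(\omega)(b, a)$ by (\ref{def-Pbi}) and $\Sf^{[N]}(a, b) \le \|\Sf^{[N]}\|_1$. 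The base case $n = 1$ (empty product) is simply the entrywise trivial bound $|\M^{\vec{s}}_1(b, a)| \le \Sf^t_1(b, a)$; the work is to extract a multiplicative saving at each stage.

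For the cancellation step, fix $v \in GR(\zeta)$ with first letter $c_v$. The sandwich form $\zeta_{n+1} = \zeta \xi_{n+1} \zeta$ decomposes $\zeta_{n+1}(b) = \zeta(y_1) \cdots \zeta(y_{s_b})$ with $s_b := |\xi_{n+1}(\zeta(b))|$, and by the definition of $GR(\zeta)$ each block $\zeta(y_k)$ contains $vc_v$ as a subword. This produces $s_b$ disjoint pairs of $c_v$-occurrences inside $\zeta_{n+1}(b)$ whose phases in formula (\ref{matr}) for $\M^{\vec{s}}_{n+1}(\omega)(b, c_v)$ differ by exactly $|\zeta^{[n]}(v)|_{\vec{s}}$. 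Pairing via $|e^{i\alpha} + e^{i\beta}| = 2|\cos((\alpha - \beta)/2)|$ and the elementary bound $1 - |\cos(\pi x)| \ge 2\|x\|^2_{\R/\Z}$ yield the one-column estimate
\begin{equation*}
|\M^{\vec{s}}_{n+1}(\omega)(b, c_v)| \le \Sf^t_{n+1}(b, c_v) - 4 s_b\,\eta_{n,v}, \qquad \eta_{n,v} := \|\omega|\zeta^{[n]}(v)|_{\vec{s}}\|^2_{\R/\Z},
\end{equation*}
while all other columns retain the trivial bound.

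In the inductive step, applying the triangle inequality to $\Pbi^{\vec{s}}_{n+1}(b, a) = \sum_c \M^{\vec{s}}_{n+1}(b, c)\,\Pbi^{\vec{s}}_n(c, a)$, inserting the one-column estimate and the inductive hypothesis, and recognizing $\sum_c \Sf^t_{n+1}(b, c)\Sf^{[n]}(a, c) = \Sf^{[n+1]}(a, b)$, produces
\begin{equation*}
|\Pbi^{\vec{s}}_{n+1}(b, a)| \le P_n \bigl[\Sf^{[n+1]}(a, b) - 4 s_b\,\eta_{n,v}\,\Sf^{[n]}(a, c_v)\bigr].
\end{equation*}
Converting this additive saving into the target multiplicative factor $(1 - c_1 \eta_{n,v})$ is the crux of the argument, and requires the positivity of $Q$ through two consequences of the sandwich structure. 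First, since $\Sf^{[n]}$ has $Q$ as its rightmost factor, the transposed form of (\ref{eq-col}) yields the row-ratio bound $\Sf^{[n]}(a, c)/\Sf^{[n]}(a, c_v) \le \col(Q^t)$ for every $c$, whence $\Sf^{[n+1]}(a, b) = \sum_c \Sf^{[n]}(a, c) \Sf_{n+1}(c, b) \le \col(Q^t)\,\Sf^{[n]}(a, c_v)\,|\zeta_{n+1}(b)|$. Second, each block length satisfies $|\zeta(y_k)| \le \max_y |\zeta(y)| \le m \max_{i,j} Q_{ij}$, giving $|\zeta_{n+1}(b)| \le s_b \cdot m \max_{i,j} Q_{ij}$. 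Combining these and taking the maximum over $v \in GR(\zeta)$ closes the induction with $c_1$ as in (\ref{defc1}). The main obstacle is precisely this propagation step: the cancellation produced by a single good return word is confined to one column of $\M^{\vec{s}}_{n+1}$, and only the uniform row-ratio control afforded by the $\zeta$-sandwich (via $\col(Q^t)$) guarantees that this local improvement survives the subsequent matrix multiplications rather than being drowned out by the remaining columns.
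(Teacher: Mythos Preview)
Your proof is correct and follows essentially the same strategy as the paper: extract a cancellation of size $\|\omega|\zeta^{[n]}(v)|_{\vec s}\|_{\R/\Z}^2$ in the $c_v$-column of the transition matrix via the good-return-word pair, then convert this additive saving into a multiplicative factor using the row-ratio bound $\col(Q^t)$ furnished by the sandwich $\zeta_j=\zeta\xi_j\zeta$. The only differences are organizational---the paper works vectorwise via the factorization $\M_n=\M_{\zeta^{[n-1]}\zeta\xi_n,\zeta}\cdot\M_{\zeta^{[n-1]}\zeta,\xi_n}\cdot\M_{\zeta^{[n-1]},\zeta}$ and uses a single pair per row, whereas you track entries $\Sf^{[n]}(a,b)$ directly and harvest all $s_b$ disjoint pairs, yielding a constant a factor of $8$ better than the stated $c_1$---but neither difference is substantive.
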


\begin{proof}
This is similar to the proof of \cite[Proposition 3.2]{BuSol2}, but there are a number of new technical details. Let $\vec{e}_a$ denote the unit basis vector in $\R^m$ corresponding to $a\in \Ak$.
In view of (\ref{def-Psi}), it suffices to estimate 
$\Pbi_N^{\vec{s}}(\om)\vec{e}_a$.
We will use the following notation: 
\begin{itemize} \item for  vectors $\vec{x},\vec{y}\in \R^m$, the inequality $\vec{x}\le \vec{y}$ means componentwise inequality, and similarly for real-valued matrices;
\item
the operation of taking absolute values of all entries for a vector $\vec{x}$  and a matrix $A$ will be denoted $\vec{x}^{\mathbf |\cdot|}$ and $A^{|\cdot|}$.
\end{itemize}
It is clear that for any, generally speaking, rectangular matrices $A,B$ such that the product $AB$ is well-defined, we have
\be \label{eq-useful}
(AB)^{|\cdot|}\le A^{|\cdot|} B^{|\cdot|}.
\ee 
We  fix $\om$ and $\vec{s}$ and omit them from  notation, so that $\M_{n}\equiv\M^{\vec{s}}_{n}(\om)$ and $\Pbi_n\equiv\Pbi_n^{\vec{s}}(\om)$.
Observe that (\ref{eq-useful}) and (\ref{eq-matr1}) imply for all $n\in \N$:
\be \label{eq-prod}
(\Pbi_n \vec{e}_a)^{|\cdot|} = (\M_{n}\cdots \M_1 \vec{e}_a)^{|\cdot|}  \le  \M_{n}^{|\cdot|}\cdots \M_1^{|\cdot|} \vec{e}_a \le \M_{n}^{|\cdot|}\cdots \M_1^{|\cdot|} \one.
\ee
In view of (\ref{motiv}), (\ref{verif}), and (\ref{canonic}), we have
\begin{eqnarray}
\M_n = \M_{\zeta^{[n-1]},\zeta_n} & = & \M_{\zeta^{[n-1]},\zeta\xi_n\zeta} \nonumber \\
& = & \M_{\zeta^{[n-1]}\zeta\xi_n,\zeta}\cdot \M_{\zeta^{[n-1]},\zeta\xi_n} \nonumber \\
& = & \M_{\zeta^{[n-1]}\zeta\xi_n,\zeta}\cdot \M_{\zeta^{[n-1]}\zeta,\xi_n}\cdot \M_{\zeta^{[n-1]},\zeta}. \label{cucum}
\end{eqnarray}
By the definition of a good return word,  for any $v\in GR(\zeta)$ and $b\in \A$, we can write 
\be \label{eq-ret}
\zeta(b) =  p^{(b)} vc q^{(b)},
\ee
where $p^{(b)}$ and $q^{(b)}$ are words, possibly empty, and $v$ starts with $c$. Let $\xi$ be any substitution on $\Ak$. First we are going to estimate the absolute value of 
$$
\M_{\xi,\zeta}(b,c)= (\Phi_c)^{\Sf^t_\xi\vec{s}}(\zeta(b),\om)
$$
from above. This is  a trigonometric polynomial with $\Sf_\zeta^t(b,c)$ exponential terms and all coefficients equal to one. By (\ref{def-Phi3}) and (\ref{eq-ret}),
this polynomial includes the terms
\begin{eqnarray*}
& & \exp\bigl(-2\pi i \om|p^{(b)}|_{\Sf_\xi^t\vec{s}}\bigr) + \exp\bigl(-2\pi i \om|p^{(b)}v|_{\Sf_\xi^t\vec{s}}\bigr) \\[1.1ex]
& = & \exp\bigl(-2\pi i \om|\xi(p^{(b)}\bigr)|_{\vec{s}}) + \exp\bigl(-2\pi i \om|\xi(p^{(b)})\xi(v)|_{\vec{s}}\bigr) \\[1.2ex]
& = & \exp\bigl(-2\pi i \om|\xi(p^{(b)})|_{\vec{s}}\bigr)\cdot \bigl( 1 + e^{-2\pi i \om|\xi(v)|_{\vec{s}}}\bigr).
\end{eqnarray*}
It follows that 
$$
|\M_{\xi,\zeta}(b,c)| \le \Sf_\zeta^t(b,c) - 2 +\bigl|1 + e^{-2\pi i \om|\xi(v)|_{\vec{s}}}\bigr|,
$$

\noindent and from the inequality
$
|1 + e^{2\pi i \tau} | \le 2 - (1/2){\|\tau\|}_{\R/\Z}^2\,,\ \ \tau\in \R,
$
we obtain
\be \label{eq-new11}
|\M_{\xi,\zeta}(b,c)|\le \Sf_\zeta^t(b,c) - \half {\bigl\|\om |\xi(v)|_{\vec{s}} \bigr\|}_{\R/\Z}^2.
\ee
Now, for an arbitrary $\vec{x} = [x_1,\ldots,x_{m}]^t >\vec{0}$ and $b\in \A$, using (\ref{eq-new11}) we can estimate
\begin{eqnarray}
(\M_{\xi,\zeta}^{|\cdot|}\vec{x})_b & = & \sum_{j=1}^{m}|\M_{\xi,\zeta}(b,j)|\cdot x_j \nonumber \\
                                & \le & \sum_{j=1}^{m}
                             \Sf_\zeta^t(b,j) x_j - \half {\bigl\|\om |\xi(v)|_{\vec{s}} \bigr\|}_{\R/\Z}^2\cdot x_c\nonumber \\
                                 & \le & \left(1-c_2\psi(\vec{x}) {\bigl\|\om |\xi(v)|_{\vec{s}} \bigr\|}_{\R/\Z}^2\right) \cdot \sum_{j=1}^{m} \Sf_\zeta^t(b,j) x_j \nonumber \\
                                  & =   & \left(1-c_2\psi(\vec{x}) {\bigl\|\om |\xi(v)|_{\vec{s}} \bigr\|}_{\R/\Z}^2\right) \cdot (\Sf_\zeta^t \vec{x})_b, \label{eq-new12}
\end{eqnarray}
where $$c_2 = \frac{1}{2m\max_{i,j} \Sf_\zeta^t(i,j)}=\frac{1}{2m\max_{i,j} Q_{ij}}\ \ \ \mbox{ and }\ \ \ 
\psi(\vec{x}) = \frac{\min_j x_j}{\max_j x_j}\,.
$$
Thus,
\be \label{au}
\M_{\xi,\zeta}^{|\cdot|}\vec{x}\le \left(1-c_2\psi(\vec{x}) {\bigl\|\om |\xi(v)|_{\vec{s}} \bigr\|}_{\R/\Z}^2\right) \cdot \Sf^t_\zeta\vec{x},
\ee
and $v\in GR(\zeta)$ is arbitrary.
We will apply the last inequality with $\xi=\zeta^{[n-1]}$
and
$$
\vec{x}=\vec{x}_n = (\Sf^{[n-1]})^t\one \in Q^t \R^m_+,
$$
recalling that $\Sf_{n-1} = \Sf_{\zeta_{n-1}} = Q\Sf_{\xi_{n-1}}Q$.
Since the matrix $Q$ is strictly positive, we have
$$
\psi(\vec{x}) = (\col(\vec{x}))^{-1} \ge (\col(Q^t))^{-1}\ \ \mbox{for any}\ \vec{x}\in Q^t \R^m_+
$$
by (\ref{eq-col}).
Note also that for any substitutions $\xi_1,\xi_2$ we have
$$
M_{\xi_1,\xi_2}^{|\cdot|} \le \Sf_{\xi_2}^t
$$
by the definition (\ref{motiv0}).
Therefore, taking (\ref{cucum}) and (\ref{au}) into account, we obtain
\begin{eqnarray*}
\M_n^{|\cdot|} (\Sf^{[n-1]})^t\one & \le & \M^{|\cdot|}_{\zeta^{[n-1]}\zeta\xi_n,\zeta}\cdot \M^{|\cdot|}_{\zeta^{[n-1]}\zeta,\xi_n}\cdot \M^{|\cdot|}_{\zeta^{[n-1]},\zeta} (\Sf^{[n-1]})^t\one \\
& \le & \Sf_\zeta^t \Sf^t_{\xi_n} \left(1-c_1 \max_{v\in GR(\zeta)}  {\bigl\|\om |\zeta^{[n-1]}(v)|_{\vec{s}} \bigr\|}_{\R/\Z}^2\right) \Sf_\zeta^t (\Sf^{[n-1]})^t\one \\
& =  & \left(1-c_1 \max_{v\in GR(\zeta)}  {\bigl\|\om |\zeta^{[n-1]}(v)|_{\vec{s}} \bigr\|}_{\R/\Z}^2\right) (\Sf^{[n]})^t \one.
\end{eqnarray*}
where $c_1$ is given by (\ref{defc1}). Iterating this inequality yields
$$
(\Pbi_n \vec{e}_a)^{|\cdot|} \le \prod_{n \le N-1} \left(1- c_1\max_{v\in GR(\zeta)} {\bigl\|\om |\zeta^{[n]}(v)| \bigr\|}_{\R/\Z}^2\right) \cdot(\Sf^{[N]})^t \one.
$$
Finally, note that the maximal component of $(\Sf^{[N]})^t \one$ is the maximal column sum of $\Sf^{[N]}$, which is $\|\Sf^{[N]}\|_1$,
and the proposition is proved completely. We emphasize that we used in an essential way that every $\zeta_n = \zeta \xi_n \zeta$ both starts and end with $\zeta$.
\end{proof}

\subsection{Cylindrical functions of higher order}
Suppose now that $f$ on $\Xx$ is a ``cylindrical function of level $\ell$'', that is, its value depends only on the first $\ell$ edges
of the path $\ov{\ek}$ and on the height $t$. It is then convenient to represent $h_\tau$ as a suspension flow with a different height function,
based on the decomposition (disjoint in measure)
$$
\Xx^{\vec{s}} = \bigcup_{a\in \A}  \Xx_a^{(\ell)},\ \ \ \mbox{where}\ \ \Xx^{(\ell)}_a= 
\{(\ov{\ek},t)\in \Xxs:\ \ov{\ek}\in X_+,\ x_\ell=F(\ek_{\ell+1})=a\}.
$$
The BV-transformation $\Tf$ ``changes'' a vertex $a$ at the $\ell$-th level after $h^{(\ell)}_a=|\zeta^{[\ell]}(a)|$ iterates. Thus, after we enter the cylinder $\Xx_a^{(\ell)}$, the flow $h_\tau$ stays in it for the time equal to 
\be \label{def-sl}
s_a^{(\ell)}:=|\zeta^{[\ell]}(a)|_{\vec{s}}= [(\Sf^{[\ell]})^t \vec{s}]_a.
\ee
More precisely, if $F(\ek_{\ell+1})=a$ and $(\ov{\ek},t)\in \Xxs$, then
\be \label{cyl1}
\exists\,t'\in\left[0,s_a^{(\ell)}\right]\ \ \ \mbox{such that}\ \ \ (\ov{\ek},t) = h_{t'}(\ek_1'\ldots\ek_\ell'\ek_{\ell+1} \ek_{\ell+2}\ldots,0),
\ee
where $\ek_1'\ldots\ek_\ell'$ is the minimal path in the Bratteli diagram from the vertex $a$ on level $\ell$ to the level $0$.
Observe that the horizontal sequence of a path $\ek_1'\ldots\ek_\ell'\ek_{\ell+1}\ek_{\ell+2}\ldots$, with $F(\ek_{\ell+1})=a$, begins with
$\zeta^{[\ell]}(a)$ and can be written as $\zeta^{[\ell]}(x^{(\ell)})$ for some $x^{(\ell)}\in \A^\N$. (In fact, $x^{(\ell)}=\hf(\sig^\ell\ek)\in \Yk^{(\ell)}$, see Section 2). To summarize this discussion, for any
real-valued continuous cylindrical function $f$ of level $\ell$ on $X_+$ there exist $c_a\in \R$ and $\psi_a^{(\ell)}\in C([0, s_a^{(\ell)}]),\ a\in \Ak$, such that
\be \label{cyl2}
f = \sum_{a\in \A} c_a f^{(\ell)}_a,\ \ \ \mbox{where}\ \ f_a^{(\ell)}(\ov{\ek},t) = \One_{\Xx^{(\ell)}_a} \psi^{(\ell)}_a(t'),\ \ \mbox{with}\ \ t'\ \ \mbox{from (\ref{cyl1})}.
\ee
Now we can also write down a generalization of (\ref{SR1}). Denote $\vec{s}^{(\ell)} = (s_a^{(\ell)})_{a\in \Ak}$ and assume
$\ov{\ek}' = \ek_1'\ldots\ek_\ell'\ek_{\ell+1}\ek_{\ell+2}\ldots$, with $F(\ek_{\ell+1})=a$. Then
\be \label{SR11}
S_R^{(\ov{\ek}',0)}(f^{(\ell)}_a,\om) = \widehat{\psi}^{(\ell)}_a(\om) \cdot {\Phi_a^{\vec{s}^{(\ell)}}\bigl(x^{(\ell)}[0,N-1],\om\bigr)}\ \ \ \mbox{for}\ \ R = \bigl|x^{(\ell)}[0,N-1]\bigr|_{\vec{s}^{(\ell)}},
\ee
where $(x_n)_{n\ge 0} = \zeta^{[\ell]}(x^{(\ell)}_n)_{n\ge 0}$ and  
$(x_n)_{n\in \Z}=\hf(\ov{\ek})$. In the next corollary we extend Proposition~\ref{prop-Dioph2} to cylindrical functions of level $\ell$. 

\begin{corollary} \label{cor-Dioph22}
Under the assumptions of Proposition~\ref{prop-Dioph2}, for any $\ell \ge 1$,  
$a,b\in \A$, $n\ge \ell+1$, $\vec{s}>\vec{0}$, and $\om\in \R$, we have
\be \label{eq-new191}
\bigl|\Phi_a^{\vec{s}^{(\ell)}}(\zeta^{[\ell+1,n]}(b),\om)\bigr| \le  {\|\Sf^{[\ell+1,n]}\|_1}\ \cdot \!\!\!\!\!\!\!\prod_{\ell+1\le k\le n-1} \!\!
\left( 1 - c_1\cdot\!\!\!\max_{v\in GR(\zeta)} {\|\om|\zeta^{[k]}(v)|_{\vec{s}}\|}_{\R/\Z}^2\right), 
\ee
where $c_1$ is given by (\ref{defc1}).
\end{corollary}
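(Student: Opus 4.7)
The strategy is to reduce Corollary~\ref{cor-Dioph22} directly to Proposition~\ref{prop-Dioph2} by applying the latter to the shifted Bratteli--Vershik diagram, starting at level $\ell$, with the positive vector $\vec{s}^{(\ell)}=(\Sf^{[\ell]})^t\vec{s}$ playing the role of $\vec{s}$. No new analytic input is needed; the argument is a bookkeeping reindexing on top of Proposition~\ref{prop-Dioph2}.

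First I would observe that the shifted diagram has substitutions $\zeta_{\ell+1},\zeta_{\ell+2},\ldots$, each of which by the standing hypothesis has the form $\zeta\xi_j\zeta$ (see (\ref{canonic})), so Proposition~\ref{prop-Dioph2} applies verbatim to this tail. In that shifted system, the composed substitution on $[\ell+1,n]$ is $\zeta^{[\ell+1,n]}$, and its substitution matrix is $\Sf^{[\ell+1,n]}$, which plays the role of $\Sf^{[N]}$. Applying Proposition~\ref{prop-Dioph2} with the positive vector $\vec{s}^{(\ell)}$ yields
\[
\bigl|\Phi_a^{\vec{s}^{(\ell)}}(\zeta^{[\ell+1,n]}(b),\om)\bigr| \le \|\Sf^{[\ell+1,n]}\|_1 \prod_{k=\ell+1}^{n-1}\left( 1 - c_1\!\!\max_{v\in GR(\zeta)}\!\bigl\|\om|\zeta^{[\ell+1,k]}(v)|_{\vec{s}^{(\ell)}}\bigr\|_{\R/\Z}^{2}\right),
\]
with the same constant $c_1$ as in (\ref{defc1}), since $c_1$ depends only on $Q=\Sf_\zeta$ and not on $\vec{s}$ or on the level $\ell$.

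Next I would convert the tiling lengths inside the product using the key identity (\ref{tilep}). Taking $\xi=\zeta^{[\ell]}$ and $U=\zeta^{[\ell+1,k]}(v)$, one gets
\[
\bigl|\zeta^{[\ell+1,k]}(v)\bigr|_{\vec{s}^{(\ell)}}=\bigl|\zeta^{[\ell+1,k]}(v)\bigr|_{(\Sf^{[\ell]})^{t}\vec{s}} =\bigl|\zeta^{[\ell]}\bigl(\zeta^{[\ell+1,k]}(v)\bigr)\bigr|_{\vec{s}} =\bigl|\zeta^{[k]}(v)\bigr|_{\vec{s}},
\]
by the composition identity $\zeta^{[\ell]}\circ\zeta^{[\ell+1,k]}=\zeta^{[k]}$. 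Substituting this into the previous display turns the bound into the desired (\ref{eq-new191}).

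There is no real obstacle: the whole analytic content has been packaged into Proposition~\ref{prop-Dioph2}, and the corollary only records that the estimate is stable under truncating the first $\ell$ levels, provided the roof vector is changed according to (\ref{def-sl}) via the Perron--type rescaling $\vec{s}\mapsto(\Sf^{[\ell]})^{t}\vec{s}$. The one point to double-check is that the constant $c_1$ is genuinely level-independent, which is immediate from its formula (\ref{defc1}) in terms of $Q$ alone, and that every $\zeta_j$ in the tail still has the sandwich form $\zeta\xi_j\zeta$, which follows from the standing assumption being imposed for \emph{all} $j\ge 1$.
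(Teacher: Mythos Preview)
Your proposal is correct and follows essentially the same approach as the paper: apply Proposition~\ref{prop-Dioph2} to the shifted sequence of substitutions $(\zeta_{\ell+1},\zeta_{\ell+2},\ldots)$ with the vector $\vec{s}^{(\ell)}$, then use the identity $|\zeta^{[\ell+1,k]}(v)|_{\vec{s}^{(\ell)}}=|\zeta^{[k]}(v)|_{\vec{s}}$ (which the paper derives via the same chain of equalities you wrote, just expanding through population vectors rather than invoking (\ref{tilep}) directly). Your added remarks on the level-independence of $c_1$ and the persistence of the sandwich form (\ref{canonic}) in the tail are correct and make explicit what the paper leaves implicit.
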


\begin{proof} It is immediate from 
Proposition~\ref{prop-Dioph2}  by shifting the indices that
\begin{eqnarray*}
\bigl|\Phi_a^{\vec{s}^{(\ell)}}(\zeta^{[\ell+1,n]}(b),\om)\bigr| & \le &  {\|\Sf^{[\ell+1,n]}\|_1}\ \cdot \!\!\!\!\!\!\!\prod_{\ell+1\le k\le n-1} \!\!
\left( 1 - c_1\cdot\!\!\!\max_{v\in GR(\zeta)} {\|\om|\zeta^{[\ell+1,k]}(v)|_{\vec{s}^{(\ell)}}\|}_{\R/\Z}^2\right). 
\end{eqnarray*}
It remains to note  that
\begin{eqnarray*}
|\zeta^{[\ell+1,k]}(v)|_{\vec{s}^{(\ell)}} & = & \langle \vec{\ell}(\zeta^{[\ell+1,k]}(v)), \vec{s}^{(\ell)}\rangle \\
                                                              & = & \langle \Sf^{[\ell+1, k]} \vec{\ell}(v), (\Sf^\ell)^t \vec{s}\rangle \\
& = & \langle \Sf^\ell \Sf^{[\ell+1,k]} \vec{\ell}(v),  \vec{s}\rangle \\
& = & \langle  \Sf^{[k]} \vec{\ell}(v),  \vec{s}\rangle \\
& = & \langle \vec{\ell}(\zeta^{[k]}(v)), \vec{s}\rangle 
 = |\zeta^{[k]}(v)|_{\vec{s}}.
\end{eqnarray*}
\end{proof}

Next we need to pass from the exponential sum corresponding to the word $\zeta^{[\ell+1,n]}(b)$ to the one corresponding to a general word in the space $\Yk^{(\ell)}$.
To this end, we will use  the well-known prefix-suffix decomposition.

\begin{lemma} \label{lem-accord} Let $x^{(\ell)}\in \Yk^{(\ell)}$ and $N\geq 1$. Then
 \begin{equation} \label{eq-accord}
x^{(\ell)} [0,N-1]=\zeta^{[\ell+1]}(u_{\ell+1})\zeta^{[\ell+1,\ell+2]}(u_{\ell+2})\ldots \zeta^{[\ell+1,n]}(u_{\ell+n})\zeta^{[\ell+1,n]}(v_{\ell+n}) \ldots \zeta^{[\ell+1]}(v_{\ell+1}),
\end{equation}
where  $u_j,v_j,\ j=\ell+1,\ldots,\ell+n$, are respectively proper suffixes and prefixes  of the words $\zeta_{j+1}(b)$, $b\in \mathcal A$. The words $u_j, v_j$ may be empty, except that  at least one of $u_{\ell+n}, v_{\ell+n}$ is
nonempty. Moreover,
\be \label{Nn-cond}
\min_{b\in \Ak}|\zeta^{[\ell+1,n]}(b)| \le N \le 2\max_{b\in \Ak} |\zeta^{[\ell+1,{n+1}]}(b)|.
\ee
\end{lemma}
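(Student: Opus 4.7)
The plan is to prove the decomposition by iterated desubstitution, building a hierarchical tiling of $\Z$ associated to $x^{(\ell)}$ and reading off the factors $u_j, v_j$ from it. The work splits naturally into three steps: setting up the hierarchy, choosing the cut level $n$ and a cut point $p$ inside $W:=[0,N-1]$, and extracting the decomposition level by level.

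First, since $\Yk^{(\ell+k)}$ is shift-invariant and the surjections $\zeta_{\ell+k+1}\colon \Yk^{(\ell+k+1)}\to\Yk^{(\ell+k)}$ from Section~2 apply at every level, I would produce compatible choices $x^{(\ell+k)} \in \Yk^{(\ell+k)}$ for $k\ge 0$ (with appropriate alignment shifts) so that $x^{(\ell)}$ inherits a hierarchy of tilings of $\Z$. At level $\ell+k$ the tiles are intervals of length $|\zeta^{[\ell+1,\ell+k]}(b)|$, indexed by the symbols $b$ of $x^{(\ell+k)}$, and every level-$(\ell+k+1)$ tile is the concatenation of level-$(\ell+k)$ sub-tiles in the order prescribed by $\zeta_{\ell+k+1}$.

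Second, I would take $n$ to be the largest positive integer such that $W$ contains at least one complete level-$(\ell+n)$ tile, and set $p$ to be the left endpoint of such a tile inside $W$, chosen when possible to coincide with a level-$(\ell+n+1)$ boundary. This is always arrangeable so that $L:=x^{(\ell)}[0,p-1]$ and $R:=x^{(\ell)}[p,N-1]$ each lie in a single level-$(\ell+n+1)$ tile. The lower bound $N\ge\min_b|\zeta^{[\ell+1,\ell+n]}(b)|$ is immediate from the existence of the complete level-$(\ell+n)$ tile inside $W$; the upper bound follows from the maximality of $n$, since if $N$ exceeded $2\max_b|\zeta^{[\ell+1,\ell+n+1]}(b)|$, the window would straddle at least three consecutive level-$(\ell+n+1)$ tiles and therefore contain a complete one, contradicting maximality.

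Third, I would extract the decomposition by iterated peeling from the cut $p$ outward. For $L$: its maximal right-aligned block of whole level-$(\ell+n)$ tiles is a proper suffix of the containing level-$(\ell+n+1)$ tile, so it equals $\zeta^{[\ell+1,\ell+n]}(u_{\ell+n})$ with $u_{\ell+n}$ a proper suffix of some $\zeta_{\ell+n+1}(b)$; what remains of $L$ to the left then lies strictly inside a single level-$(\ell+n)$ tile, and applying the same extraction one level down yields $\zeta^{[\ell+1,\ell+n-1]}(u_{\ell+n-1})$ with $u_{\ell+n-1}$ a proper suffix of some $\zeta_{\ell+n}(b')$. Iterating downward produces all factors $\zeta^{[\ell+1,\ell+k]}(u_{\ell+k})$ for $k=n,n-1,\ldots,1$. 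A symmetric peeling of $R$ from its left end produces the prefix factors $v_j$. The condition that at least one of $u_{\ell+n},v_{\ell+n}$ be nonempty follows because the complete level-$(\ell+n)$ tile witnessing the choice of $n$ lies either in $L$ or in $R$.

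The main obstacle I anticipate is not the combinatorial core, which is a standard prefix-suffix decomposition, but the bookkeeping: verifying that the peeling terminates cleanly at level $\ell+1$ so that the bottom factors $u_{\ell+1},v_{\ell+1}$ are indeed proper suffixes or prefixes of $\zeta_{\ell+2}(\cdot)$, and handling degenerate alignments at intermediate levels (window endpoints coinciding with tile boundaries) that force some $u_j$ or $v_j$ to be empty, as the statement explicitly allows.
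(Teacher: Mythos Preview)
Your proposal elaborates the same idea the paper invokes: the paper's entire proof is the single sentence ``This is immediate from the description of $\Yk^{(\ell)}$ at the end of Section~2,'' meaning the hierarchical representation~(\ref{hff}) and the desubstitution relation $x=T^{k-1}\zeta_1(x')$. Your scheme of building the hierarchy and peeling from a cut point is the standard way to make this precise, and your argument for the bound~(\ref{Nn-cond}) is correct.

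There is, however, a small gap in your Steps~2--3. Your $n$ is the largest level with a complete level-$(\ell{+}n)$ tile inside $W$, and you then try to place the cut $p$ at a level-$(\ell{+}n{+}1)$ boundary. But when $W$ lies strictly in the interior of a single level-$(\ell{+}n{+}1)$ tile $T$ (which can certainly occur: take $W$ to contain one middle level-$(\ell{+}n)$ sub-tile of $T$ with a bit of overhang on each side), there is no such boundary, and your $p$ is only a level-$(\ell{+}n)$ boundary interior to $T$. Then the maximal right-aligned block of whole level-$(\ell{+}n)$ tiles in $L$ ends at $p$, which is \emph{not} the right edge of $T$, so this block is a middle segment of $\zeta_{\ell+n+1}(b)$, not a proper suffix; your extraction of $u_{\ell+n}$ fails. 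The clean fix is not to choose $n$ first and then search for a cut, but to read the decomposition directly off the two endpoint towers supplied by~(\ref{hff}): the suffix tower at position~$0$ and the symmetric prefix tower at position~$N$. These towers climb level by level and eventually meet; the meeting level is then the correct $n$, and the nonemptiness of $u_{\ell+n}$ or $v_{\ell+n}$ as well as the bounds~(\ref{Nn-cond}) fall out automatically. This is precisely what the paper's reference to Section~2 is pointing at.
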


\begin{proof} This is immediate from the description of $\Yk^{(\ell)}$ at the end of Section 2.
\end{proof}

\begin{prop}\label{prop-Dioph3}
Under the assumptions of Proposition~\ref{prop-Dioph2}, for any $\ell\ge 1$, 
$a \in \A$, $N\in \Nat$, $\vec{s}>\vec{0}$, $x^{(\ell)}\in \Yk^{(\ell)}$, and $\om\in \R$, we have, 
\be \label{eq-new192}
\bigl|{\Phi_a^{\vec{s}^{(\ell)}}\bigl(x^{(\ell)}[0,N-1],\om\bigr)}\bigr| \le  2\sum_{j=\ell}^n \|\Sf^{[\ell+1,j]}\|_1\cdot \|\Sf_{j+1}\|_1\ \cdot\!\!\!\!\!\!\!\!
\prod_{\ell+1\le k\le j-1}\!\! \!\!\bigl( 1 - c_1\cdot\!\!\max_{v\in GR(\zeta)} {\|\om|\zeta^{[k]}(v)|_{\vec{s}}\|}_{\R/\Z}^2\bigr), 
\ee
where $c_1$ is given by (\ref{defc1}) and $n\in \N$ is such that (\ref{Nn-cond}) holds.
Here we let $\Sf^{[\ell+1,\ell]}=:I$. 
\end{prop}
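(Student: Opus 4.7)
The plan is to reduce the general exponential sum $\Phi_a^{\vec{s}^{(\ell)}}(x^{(\ell)}[0,N-1],\om)$ to single-letter expressions already controlled by Corollary~\ref{cor-Dioph22}, using the prefix-suffix decomposition of Lemma~\ref{lem-accord} together with the additivity identity~(\ref{eq-Phi}).

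First, I will invoke Lemma~\ref{lem-accord} to write $x^{(\ell)}[0,N-1]$ as a concatenation of $O(n-\ell)$ blocks of the form $\zeta^{[\ell+1,j]}(w_j)$, where each $w_j$ is a proper prefix or suffix of some $\zeta_{j+1}(b)$, $b\in \Ak$, and $n$ is the depth provided by~(\ref{Nn-cond}). Iterating~(\ref{eq-Phi}) along this concatenation expresses $\Phi_a^{\vec{s}^{(\ell)}}(x^{(\ell)}[0,N-1],\om)$ as a sum of the terms $\Phi_a^{\vec{s}^{(\ell)}}(\zeta^{[\ell+1,j]}(w_j),\om)$ weighted by unit-modulus exponentials encoding the accumulated tiling lengths. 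The triangle inequality eliminates those unimodular factors.

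Second, for each block I will apply~(\ref{eq-Phi}) once more, this time letter by letter inside $w_j=c_1\cdots c_p$, since substitutions distribute over concatenation. This yields
$$\bigl|\Phi_a^{\vec{s}^{(\ell)}}(\zeta^{[\ell+1,j]}(w_j),\om)\bigr|\le \sum_{i=1}^{p}\bigl|\Phi_a^{\vec{s}^{(\ell)}}(\zeta^{[\ell+1,j]}(c_i),\om)\bigr|.$$
Corollary~\ref{cor-Dioph22} bounds each summand by $\|\Sf^{[\ell+1,j]}\|_1 \prod_{\ell+1\le k\le j-1}\bigl(1-c_1\max_{v\in GR(\zeta)}\|\om|\zeta^{[k]}(v)|_{\vec{s}}\|_{\R/\Z}^2\bigr)$. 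Since $w_j$ is a proper factor of some $\zeta_{j+1}(b)$, we have $p\le |\zeta_{j+1}(b)|\le \|\Sf_{j+1}\|_1$, so each block contributes at most $\|\Sf_{j+1}\|_1 \cdot \|\Sf^{[\ell+1,j]}\|_1$ times the same product. Summing over $j$ and folding the prefix/suffix dichotomy into an overall factor of $2$ produces the stated inequality.

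The main difficulty is not analytic but bookkeeping: one must reconcile the indexing of Lemma~\ref{lem-accord} with the summation range $\ell\le j\le n$ in the proposition, and verify that the boundary case $j=\ell$ (where the convention $\Sf^{[\ell+1,\ell]}=I$ combines with the empty product $\prod_{\ell+1\le k\le \ell-1}(\cdots)=1$) correctly absorbs the shortest pieces arising from the outermost terms of the prefix-suffix decomposition. Once this is handled cleanly, no further Diophantine or spectral input is required beyond Corollary~\ref{cor-Dioph22}.
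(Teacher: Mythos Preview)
Your proposal is correct and follows essentially the same route as the paper: invoke the prefix--suffix decomposition of Lemma~\ref{lem-accord}, use the additivity identity~(\ref{eq-Phi}) together with the triangle inequality to split $\Phi_a^{\vec{s}^{(\ell)}}$ over the resulting blocks, then break each $w_j$ into letters and apply Corollary~\ref{cor-Dioph22} letter by letter, bounding $|w_j|\le\|\Sf_{j+1}\|_1$ and accounting for the prefix/suffix pair by the factor~$2$. The paper's proof is simply a three-line compression of exactly this argument.
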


\begin{proof}
We use Lemma~\ref{lem-accord}, and apply Corollary~\ref{cor-Dioph22}  to each term. The factor 
$2\|\Sf_{j+1}\|_1$
 in (\ref{eq-new192}) appears, because $|u_j|, |v_j| \le \max_b |\zeta_j(b)| = \|\Sf_{j+1}\|_1$ in (\ref{eq-accord}).
\end{proof}


\section{Random BV-transformations: statement of the theorem and plan of the proof}

Here we consider dynamical systems generated by a {\em random} sequence of Markov compacta. In order to state our results, we need some preparation; specifically, the Oseledets Theorem.

Recall that $\Frg$ denotes the set of all oriented graphs on $m$ vertices such that there is an edge starting at every vertex and an edge ending at every vertex (we allow loops and multiple edges). We also assume that each graph is equipped with a Vershik ordering. Let $\Om$ be the space of sequences of graphs:
$$
\Om = \{\bom = \ldots \bom_{-n}\ldots\bom_0{\mbox{\bf .}}\bom_1\ldots \bom_n\ldots,\ \bom_i \in \Frg,\ i\in \Z\}.
$$
For $\bom\in\Om$ we denote by $X(\bom)$  the Markov compactum corresponding to $\bom$ according to the rule
$
\Gam_n = \bom_{n},\ \ n\in \Z,
$
and let $\sig$ the left shift on $\Om$. We also consider the corresponding one-sided compactum $X_+(\bom)$. 
For a word $\bq = \bq_1\ldots \bq_k\in \Frg^k$ we can ``concatenate'' the graphs to obtain the ``aggregated'' graph $\Gam_\bq$, also belonging to $\Frg$. By the definition of incidence matrix, we  have
$$
A(\bq) := A(\Gam_\bq) = A(\bq_k)\cdot\ldots\cdot A(\bq_1).
$$
Since the graphs are equipped with the Vershik ordering, we also have a corresponding sequence of substitutions, so that $\zeta(\bq) = \zeta(\bq_1)\ldots \zeta(\bq_k)$. We will also need a ``2-sided cylinder set'':
$$
[\bq.\bq] = \{\bom\in \Om:\ \bom_{-k+1}\ldots\bom_0 = \bom_1\ldots\bom_k = \bq\}.
$$
Following \cite{BufGur}, we say that the word $\bq = q_1\ldots q_k$ is ``simple'' if for all $2 \le i \le k$ we have $q_i\ldots q_k \ne q_1\ldots q_{k-i+1}$. If the word $\bq$ is simple, two occurrences of $\bq$ in the sequence
$\bom$ cannot overlap. 
Let $\P$ be an ergodic $\sig$-invariant probability measure on $\Om$ satisfying the following 

\medskip

\noindent
{\bf Conditions:} 

{\bf (C1)} {\em  There exists a word $\bq\in \Frg^k$ such that all the entries of the matrix $A(\bq)$ are positive and
\begin{equation}\label{gamone}
\P(\bqcyl)>0.
\end{equation}

{\bf (C2)} The matrices $A(\bom_n)$ are almost surely invertible with respect to $\P$.

{\bf (C3)} The functions $\bom\mapsto \log(1+ \|A^{\pm 1}(\bom_1)\|)$ are integrable.} 

\noindent (Here and below
$\|A\|$ denotes the Euclidean operator norm of the matrix.)

\medskip

Observe that {\bf (C3)}, together with the Birkhoff ergodic theorem, immediately gives
\be \label{immed1}
\lim_{n\to \infty} n^{-1}\log(1 + \|A(\bom_n)\|) = 0\ \ \mbox{for $\P$-a.e.}\ \bom\in \Om. 
\ee
We obtain a measurable cocycle $\AA: \Om\to GL(m,\R)$, defined by $\AA(\bom) = A(\bom_1)$, called the {\em renormalization cocycle}. Denote
\begin{equation}\label{renormcoc}
\AA(n,\bom) = \left\{ \begin{array}{lr} \AA(\sig^{n-1}\bom)\cdot \ldots \cdot \AA(\bom), & n> 0; \\
                                                     Id, & n=0; \\
                                                      \AA^{-1}(\sig^{-n}\bom) \cdot \ldots \cdot \AA^{-1}(\sig^{-1}\bom), & n<0,\end{array} \right.
\end{equation}
so that 
$$\AA(n,\bom) =A(\bom_{n})\cdots A(\bom_1),\ n\ge 1.$$
As in Section 2, we 
consider the sequence of substitutions
$
\zeta(\bom_{k}),\ \bom\in \Om,\ k\in \Z,
$
and their substitution matrices $\Sf_{\zeta_k(\bom)}=A^t(\bom_k)$. (Recall that all graphs $\bom_k$ are equipped with a Vershik ordering.) Thus
$$\AA(n,\bom) = \Sf^t(\bom_n\ldots \bom_1)= \Sf^t_{\zeta(\bom_1\ldots\bom_n)}\ n\ge 1.$$
By the Oseledets Theorem \cite{oseledets} (for a detailed survey, see Barreira-Pesin \cite{barpes}),
there exist Lyapunov exponents $\theta_1> \theta_2 > \ldots > \theta_r$ and, for $\P$-a.e.\ $\bom\in \Om$, a direct-sum decomposition
\be \label{os1}
\R^m = E^1_\bom \oplus \cdots \oplus E^r_\bom 
\ee
that depends measurably on $\bom\in \Om$ and satisfies the following:

(i) for $\P$-a.e.\ $\bom\in \Om$, any $n\in\Z$, and any $i=1,\ldots,r$ we have
$$
\AA(n,\bom) E^i_\bom = E^i_{\sig^n\bom};
$$

(ii) for any $v\in E^i_\bom,\ v\ne 0$, we have
$$
\lim_{|n|\to \infty} \frac{\log\|\AA(n,\bom)v\|}{n} = \theta_i.
$$

(iii) $\lim_{|n|\to \infty} \frac{1}{n}\log\angle\left(\bigoplus_{i\in I} E^i_{\sig^n\bom}, \bigoplus_{j\in J} E^j_{\sig^n \bom}\right) =0$ whenever $I\cap J = \es$.




Let $P^i_\bom$ be the projection to $E^i_\bom$ arising from (\ref{os1}).
Denote  by $\sig_f$ the spectral measure for the system $(\Xx^{\vec{s}},h_t)$ with the test function $f$ (assuming the system is uniquely ergodic).
Now we can state our theorem.

\begin{theorem} \label{th-main1}
Let $(\Om,\P,\sig)$ be an invertible ergodic measure-preserving system satisfying conditions {\bf (C1)-(C3)} above. Consider the cocycle $\AA(n,\bom)$ defined by (\ref{renormcoc}).  Assume that
\begin{enumerate}
\item[(a)] the Lyapunov spectrum satisfies
$$
\theta_1 > \theta_2 > 0 > \theta_3 > \ldots,
$$
and the two top exponents are simple (i.e.\ $\dim(E_\bom^1) = \dim(E^2_\bom)=1$ for $\P$-a.e.\ $\bom$);
\item[(b)] there exists a simple word $\bq\in \Frg^k$ for some $k\in \N$, such that all the entries of the matrix $A(\bq)$ are strictly positive and $\P(\bqcyl)>0$;
\item[(c)] there exist ``good return words'' $\{u_j\}_{j=1}^m$ for $\zeta=\zeta(\bq)$ (see Definition~\ref{goodret}), such that $\{\vec{\ell}(u_j)\}_{j=1}^m$ is a
basis for $\R^m$;
\item[(d)] Let $\ell_\bq(\bom)$ be the ``negative'' waiting time until the first appearance of $\bq.\bq$, i.e.
$$
\ell_\bq(\bom) = \min\{n\ge 1:\, \sig^{-n}\bom \in \bqcyl\}.
$$
Let $\P(\bom|\bom^+)$ be the conditional distribution on the set of $\bom$'s
conditioned on the future $\bom^+ = \bom_1\bom_2\ldots$ We assume that there exist $\eps>0$ and $1<C<\infty$ such that
\be \label{eq-star1}
\int_{\bqcyl} \left\|\AA(\ell_\bq(\bom), \sig^{-\ell_\bq(\bom)}\bom)\right\|^\eps\,d\P(\bom|\bom^+) \le C\ \ \ \mbox{for all}\ \ \bom^+\  \mbox{starting with}\ \q.
\ee
\end{enumerate}
Then there exists $\gam>0$  such that for $\P$-a.e.\ $\bom \in \Om$ the following holds:

Let $(X_+,\Tf)$ be 
the Bratteli-Vershik system corresponding to $\bom^+$, which is uniquely ergodic.
 Let $(\Xx^{\vec{s}},h_t)$ be the suspension flow over $(X_+,\Tf)$ under the piecewise-constant roof function determined by $\vec{s}$. Then for 
all $\beta>0$ and $B>1$ there exists $r_0=r_0(\bom,\beta,B)>0$, such that 
for Lebesgue-a.e.\ $\vec{s}$, with $\|\vec{s}\|_1=1$ and $\min_{j=1,2}|P^j_\bom(\vec{s})|\ge \beta$,  for any $f\in \Lip_w^+(\Xx^{\vec{s}})$,
\be \label{main-Hoeld}
\sig_f(B(\om,r))\le C(\bom,\|f\|_L)\cdot r^\gam\ \ \ \mbox{for all}\ \ \om\in [B^{-1},B]\ \ \mbox{and}\ \ 0<r< r_0,
\ee
with the constant depending only on $\bom$ and $\|f\|_L$.
\end{theorem}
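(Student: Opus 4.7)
The plan is to reduce the H\"older bound on $\sig_f$ to a growth estimate on twisted Birkhoff integrals via Lemma~\ref{lem-varr}, and then to a product estimate via Proposition~\ref{prop-Dioph3}. By a standard density argument, a weakly Lipschitz function on $\Xxs$ can be approximated by cylindrical functions of level $\ell$ as in \eqref{cyl2}, with $\ell$ chosen as a function of the scale $r$. For such functions, \eqref{SR11} expresses $S_R^{(\ov{\ek}',0)}(f_a^{(\ell)},\om)$ in terms of $\Phi_a^{\vec{s}^{(\ell)}}$, which Proposition~\ref{prop-Dioph3} bounds by
$$
2\sum_{j=\ell}^n\|\Sf^{[\ell+1,j]}\|_1\cdot\|\Sf_{j+1}\|_1\cdot\prod_{\ell+1\le k\le j-1}\bigl(1-c_1M_k(\om)^2\bigr),
$$
where $M_k(\om):=\max_{v\in GR(\zeta)}\|\om|\zeta^{[k]}(v)|_{\vec{s}}\|_{\R/\Z}$. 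Telescoping along occurrences of the positive-$\P$-measure block $\bq.\bq$ (condition (b)) puts every $\zeta_n$ into the form $\zeta\xi_n\zeta$ required by Proposition~\ref{prop-Dioph2}, so the estimate applies.

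Next, I feed in the Oseledets theorem together with conditions (C3) and (a): $\|\Sf^{[\ell+1,j]}\|_1=e^{(j-\ell)\theta_1+o(j-\ell)}$ and $R\asymp e^{j\theta_1+o(j)}|P^1_\bom\vec{s}|$, up to subexponential factors controlled by the lower bound $\beta$. To match the $L^2$-estimate \eqref{L2est} with $\alpha=1-\gam/2$ it then suffices to show that, uniformly over $\om\in[B^{-1},B]$, the density of indices $k$ with $M_k(\om)\ge c_0$ stays above some $\delta>0$ depending only on $\bom$. Condition (c), asserting that $\{\vec{\ell}(u_j)\}_{j=1}^m$ is a basis of $\R^m$, reduces this to a Diophantine statement about
$$
\vec{y}_k(\bom,\vec{s}):=\AA(k,\bom)\vec{s}=\sum_{i=1}^r e^{k\theta_i+o(k)}\alpha_i(\bom)\vec v_i(\sig^k\bom),
$$
namely that $\|\om\vec{y}_k\pmod{\Z^m}\|$ is not too small on too many $k$'s.

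The core is an Erd\H{o}s-Kahane argument adapted to the random cocycle. Writing $\om\langle\vec\ell(u_j),\vec y_k\rangle=K_k^{(j)}+\eps_k^{(j)}$ with $K_k^{(j)}\in\Z$ and $|\eps_k^{(j)}|<\tfrac12$, the cocycle relation $\vec y_{k+1}=A(\bom_{k+1})\vec y_k$ produces, modulo a bounded-choice error governed by $\|\om A(\bom_{k+1})\vec\eps_k\|$, a nearly deterministic recursion between $(K_{k+1}^{(j)})_j$ and $(K_k^{(j)})_j$. The assumption (a) of exactly two positive Lyapunov exponents, both simple, is what makes this recursion close: the leading behaviour is one-dimensional (driven by $\theta_1$), the next order is governed by $\theta_2<\theta_1$, and everything below is exponentially damped. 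Condition (d), via Markov's inequality applied to \eqref{eq-star1}, converts the $L^\eps$-bound on the return-time cocycle norm into a large-deviation input that suppresses anomalously large single-step cocycle pieces on the positive-density set of $k$'s with $\sig^k\bom\in\bqcyl$, which is exactly where the iteration is carried out.

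The covering then produces, scale by scale, an efficient cover of the set of ``bad'' parameters $\vec{s}$ for which some $\om\in[B^{-1},B]$ admits too low a density of indices with $|\eps_k^{(j)}|$ away from zero; standard bookkeeping gives this bad set of $\vec{s}$ Hausdorff dimension strictly below $m$, hence zero Lebesgue measure. For any $\vec{s}$ outside this exceptional set, the density of good $k$ exceeds a positive constant uniformly in $\om\in[B^{-1},B]$, so Proposition~\ref{prop-Dioph3} yields $|S_R|\le C_{\bom,\vec{s}}R^{1-\gam/2}$, and Lemma~\ref{lem-varr} finishes the proof. The principal obstacle, and the reason for the length of the actual argument, is the transition from the stationary case of \cite{BuSol2} to the genuinely random cocycle setting: the Oseledets theorem controls norms and subspace angles only up to subexponential errors, and only the hypothesis that at most two Lyapunov exponents are positive keeps these errors strictly smaller than the spectral gap $\theta_1-\theta_2$, so that the Erd\H{o}s-Kahane recursion can be iterated and closed uniformly over $\om$ and over generic $\vec{s}$.
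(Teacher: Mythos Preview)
Your overall architecture matches the paper's: reduce to twisted Birkhoff integrals via Lemma~\ref{lem-varr}, approximate $f\in\Lip_w^+$ by cylindrical functions of level $\ell\sim\gam\log R/\theta_1$, telescope along occurrences of $\bq.\bq$ to land in the setting of Proposition~\ref{prop-Dioph3}, and then run an Erd\H{o}s--Kahane covering argument fed by the large-deviation input from condition (d). That much is right.

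The substantive gap is in your Erd\H{o}s--Kahane step. You set up an $m$-dimensional recursion for the integer vectors $(K_k^{(j)})_{j\le m}$ via the full cocycle relation $\vec y_{k+1}=A(\bom_{k+1})\vec y_k$, and then assert that the two-exponent hypothesis ``makes this recursion close''. But you do not explain \emph{how} the reduction to two dimensions is effected, and this is the crux of the argument. The paper does not work with all $m$ good return words at once; instead it \emph{chooses} a single $v_n\in\{u_1,\dots,u_m\}$ at each step (Lemma~\ref{lem-new1}) so as to force the $2\times 2$ matrix
$$
\Thb_n=\bigl(A(n+i,j)\langle\vec\ell(v_{n+i}),\vec e_j^{(n+i)}\rangle\bigr)_{i=0,1;\,j=1,2}
$$
built from the top two Oseledets directions to have a quantitatively large determinant. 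Only then does one obtain controlled bounds on $\|\Thb_n^{-1}\|$ and $\|\Thb_{n+1}\Thb_n^{-1}\|$ (Corollary~\ref{cor-linal}), and hence a genuine recursion $K_{n+2}\approx[\Thb_{n+1}\Thb_n^{-1}\vec K_n]_2$ with error governed by $e^{2(W_n+W_{n+1})}$. Without this step the subexponential Oseledets errors in the angles between $E^1_{\sig^n\bom}$ and $E^2_{\sig^n\bom}$ are not controlled, and your $m$-dimensional recursion does not close.

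A second, related gap is in the covering. What the paper actually covers is the \emph{one}-dimensional parameter $a_2/a_1$ (the ratio of the top two Oseledets coordinates of $\vec s$), which is recovered from $(K_n,K_{n+1})$ up to an error $\lesssim e^{-(\theta_2-\delta_1)n}$ via \eqref{ur31}. The count of admissible integer pairs then gives $\dim_H$ of the bad $a_2/a_1$ set below any $\epsilon>0$, whence the bad set of $\vec s$ in the simplex $\Delta_m$ has dimension $<m-2+\epsilon$. Your claim that the bad set has dimension ``strictly below $m$'' is not sufficient: $\Delta_m$ itself has dimension $m-1$, so you need $<m-1$, and you have not identified which low-dimensional parameter your covering pins down.
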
 

\noindent {\bf Remarks.}  1. It is clear that condition {\bf (C1)} follows from assumption (b), but we chose to state {\bf (C1)} explicitly, since this is the condition which appears in the literature and implies unique ergodicity. The unique ergodicity of the system $(X_+,\Tf)$ for $\P$-a.e.\ $\bom$ under the given assumptions
is well-known and goes back to the work of Furstenberg \cite{furst} (see the beginning of Section 2).

2. The assumption that $\bq$ is a simple word ensures that occurrences of $\bq$ do not overlap. Then we have
\be \label{egstar}
\AA(\ell_\bq(\bom), \sig^{-\ell_\bq(\bom)}\bom) = A(\bq) A(\bp) A(\bq),
\ee
for some $\bp \in \Frg$ (possibly trivial). For our application, it will be easy to make sure that $\bq$ is simple, as we show in Section 11, unlike in the paper \cite{BufGur}, where additional efforts were needed to achieve the desired aims.

\medskip

The scheme of the proof is as follows: first we reduce the theorem to the case where all the symbols  have the form $\bom_n = \bq\bp_n\bq$. This is done by considering the first return map to the cylinder set
$\bqcyl$.  Then we apply  Proposition~\ref{prop-Dioph3}, with the goal to use Lemma~\ref{lem-varr}.
In order to achieve the desired estimate, roughly speaking, we need to show that for $\P$-a.e. sequence of substitutions, for Lebesgue a.e.\ $\vec{s}$,
the distance from $\om|\zeta^{[n]}(v_n)|_{\vec{s}}$ to the nearest integer (for some choice of a good return word $v_n$ which  depends on
$n$)  is bounded away from zero for a positive frequency of $n$'s (uniformly in $\om$  bounded away from zero and infinity). The proof splits into two parts, separating the two ``almost every''.
The first part is probabilistic, showing that certain assumptions on the sequence of substitutions $\zeta(\bp_n)$ hold $\P$-almost surely.
In the second part we fix a typical sequence $\zeta(\bp_n)$ and obtain estimates for a.e.\ $\vec{s}$. This is done using the
``Erd\H{o}s-Kahane argument.''


\section{Reduction}

In this short section, we show that Theorem~\ref{th-main1} reduces to the case when 
\be \label{induce}
\bom_n = \bq \bp_n \bq\ \ \ \mbox{for all}\ \ n\in \Z,
\ee
where $\bq$ is a fixed graph with fixed Vershik ordering, such that its incidence matrix  is strictly positive, and $\bp_n$ is arbitrary. In the next theorem we use the same notation as in Theorem~\ref{th-main1}.

\begin{theorem} \label{th-main11}
Let $(\Om_\bq,\P,\sig)$ be an invertible ergodic measure-preserving system of the form (\ref{induce}) satisfying conditions {\bf (C1)-(C3)} from Section 4. Consider the cocycle $\AA(n,\bom)$ defined by (\ref{renormcoc}).  Assume that
\begin{enumerate}
\item[(a$'$)] the Lyapunov spectrum satisfies
$$
\theta_1 > \theta_2 > 0 > \theta_3 > \ldots,
$$
and the two top exponents are simple;
\item[(b$'$)] the substitution $\zeta = \zeta(\bq)$ is such that its substitution matrix $\Sf_\zeta=Q$ has strictly positive entries;
\item[(c$'$)] there exist ``good return words'' $\{u_j\}_{j=1}^m$ for $\zeta=\zeta(\bq)$ (see Definition~\ref{goodret}), such that $\{\vec{\ell}(u_j)\}_{j=1}^m$ is a
basis for $\R^m$;
\item[(d$'$)]  there exist $\eps>0$ and $1<C<\infty$ such that
\be \label{eq-star11}
\int_{\Om_\bq} \left\|A(\bom_0)\right\|^\eps\,d\P(\bom|\bom^+) \le C\ \ \ \mbox{for all}\ \ \bom^+.
\ee
\end{enumerate}
Then there exists $\gam>0$  such that for $\P$-a.e.\ $\bom \in \Om_\bq$ the following holds:

Let $(X_+,\Tf)$ be 
the Bratteli-Vershik system corresponding to $\bom^+$, which is uniquely ergodic.
 Let $(\Xx^{\vec{s}},h_t)$ be the suspension flow over $(X_+,\Tf)$ under the piecewise-constant roof function determined by $\vec{s}$. Then for 
all $\beta>0$ and $B>1$ there exists $r_0=r_0(\bom,\beta,B)>0$, such that 
for Lebesgue-a.e.\ $\vec{s}$, with $\|\vec{s}\|_1=1$ and $\min_{j=1,2}|P^j_\bom(\vec{s})|\ge \beta$,  for any $f\in \Lip_w^+(\Xx^{\vec{s}})$,
\be \label{main-Hoeld}
\sig_f(B(\om,r))\le C(\bom,\|f\|_L)\cdot r^\gam\ \ \ \mbox{for all}\ \ \om\in [B^{-1},B]\ \ \mbox{and}\ \ 0<r< r_0,
\ee
with the constant depending only on $\bom$ and $\|f\|_L$.
\end{theorem}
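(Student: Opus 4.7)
The plan is to combine Proposition~\ref{prop-Dioph3} with the $L^2$ criterion of Lemma~\ref{lem-varr}, using probabilistic control of the renormalization cocycle (from Oseledets and condition (d$'$)) and a Diophantine Erd\H{o}s--Kahane argument in the height vector $\vec{s}$. First I would approximate $f\in\Lip_w^+(\Xxs)$ by a cylindrical function of some level $\ell=\ell(R)$ proportional to $\log R$, absorbing the approximation error into an $R^{-\alpha}$-loss by means of the Lipschitz estimate (\ref{LL2}) together with the exponential decay $\nu_+([\ek_1\ldots\ek_\ell])\asymp e^{-\theta_1\ell}$ produced by Oseledets. For a cylindrical $f$ the formula (\ref{SR11}), Lemma~\ref{lem-accord} and Proposition~\ref{prop-Dioph3} bound the twisted Birkhoff integral $S_R^{(y)}(f,\om)$ pointwise by
\[
C_f\cdot 2\sum_{j\le n}\|\Sf^{[\ell+1,j]}\|_1\,\|\Sf_{j+1}\|_1\cdot \prod_{k=\ell+1}^{j-1}\bigl(1-c_1 D_k(\om,\vec{s})^2\bigr),
\]
where $D_k(\om,\vec{s}):=\max_{v\in GR(\zeta)}\bigl\|\om\,|\zeta^{[k]}(v)|_{\vec{s}}\bigr\|_{\R/\Z}$, with $\|\Sf^{[\ell+1,j]}\|_1=e^{(\theta_1+o(1))(j-\ell)}$ and $R\asymp e^{(\theta_1+o(1))n}$ almost surely by Oseledets. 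Integrating over the starting point in $\Xxs$ (which only affects the beginning of the word in Lemma~\ref{lem-accord}) converts this into the $L^2$ bound required by Lemma~\ref{lem-varr}.

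Using the inequality $\prod(1-c_1 D_k^2)\le\exp(-c_1\sum D_k^2)$, the H\"older property (\ref{main-Hoeld}) with some $\gamma>0$ is reduced to the existence of constants $\delta,d>0$ depending only on $B$ such that for $\P$-a.e.\ $\bom$, Lebesgue-a.e.\ admissible $\vec{s}$, and every $\om\in[B^{-1},B]$,
\be\label{plan-key}
\liminf_{n\to\infty}\frac{1}{n}\#\{k\le n: D_k(\om,\vec{s})\ge\delta\}\ge d.
\ee
The gain $d(\theta_1\log(1-c_1\delta^2)^{-1})$ then outweighs the cocycle norm $e^{\theta_1 n}$ and yields H\"older exponent $\gamma$ proportional to $d\delta^2/\theta_1$.

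The verification of (\ref{plan-key}) splits into a probabilistic and a Diophantine part. Probabilistically, the Oseledets decomposition $\R^m = E^1_\bom\oplus E^2_\bom\oplus E^-_\bom$ (top two exponents simple and positive, $\theta_3<0$), together with condition (d$'$), provides uniform quantitative estimates: the dual cocycle $(\Sf^{[k]})^t$ contracts $E^-_\bom$ exponentially and acts on $E^1_\bom\oplus E^2_\bom$ at rates $\theta_1,\theta_2$ with slowly-varying two-dimensional geometry. Since $\{\vec{\ell}(u_j)\}$ is a basis, the numbers $|\zeta^{[k]}(u_j)|_{\vec{s}} = \langle(\Sf^{[k]})^t\vec{\ell}(u_j),\vec{s}\rangle$ satisfy, modulo subexponential and exponentially contracted errors, an effective two-term linear recursion whose leading coefficients are nondegenerate thanks to $\min_{j=1,2}|P^j_\bom(\vec{s})|\ge\beta$. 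Fixing a typical $\bom$, the Erd\H{o}s--Kahane step covers the set of $\vec{s}$ on which (\ref{plan-key}) fails for some $\om\in[B^{-1},B]$: writing $\om\,\langle(\Sf^{[k]})^t\vec{\ell}(v),\vec{s}\rangle = K_k(v)+\eps_k(v)$ with $|\eps_k(v)|<\delta$ along a density-$>1-d$ set of $k$'s and every $v\in GR(\zeta)$, the approximate rank-two dynamics forces $K_k$ to be determined up to a bounded number of choices by $K_{k-1},K_{k-2}$ (with $O(1)$ free choices at the ``bad'' $k$'s); there are therefore at most $e^{\eta n}$ admissible integer trajectories, with $\eta\to 0$ as $\delta\to 0$ and $d\to 1$. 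Each trajectory pins $\vec{s}$ (at fixed $\om$) to a tube of Lebesgue measure $\lesssim e^{-\theta_1 n}$ in the two fast directions, so the bad $\vec{s}$-set has measure $\le e^{(\eta-\theta_1)n}\to 0$; Borel--Cantelli together with a $\delta$-discretization of $\om\in[B^{-1},B]$ yields the almost sure H\"older bound.

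The principal technical obstacle is the non-stationarity of the cocycle: in the single-substitution framework of \cite{BuSol2} one controls the eigenvector bases and their inverses by explicit Vandermonde estimates, whereas here the analogous information is only asymptotic (Oseledets), carrying subexponential errors that must be absorbed below the spectral gap $\theta_1-\theta_2$ used in the Erd\H{o}s--Kahane counting. This is exactly why the hypothesis that the top two Lyapunov exponents are simple and positive with no zero exponent is essential: the gap to $\theta_3<0$ guarantees that the contribution of $E^-_\bom$ to $D_k(\om,\vec{s})$ is uniformly negligible, and the two-dimensional fast geometry driving the covering argument is stable under the cocycle. Condition (d$'$) is what upgrades $\P$-almost-sure Oseledets bounds to uniform tail control along the sequence, playing in the random setting the role of the single substitution matrix in the stationary case.
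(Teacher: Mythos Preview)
Your plan follows the paper's architecture closely: approximate by level-$\ell$ cylindrical functions with $\ell\sim\log R$, feed into Proposition~\ref{prop-Dioph3}, and reduce the H\"older estimate to a positive-density statement of the type~(\ref{plan-key}), proved by an Erd\H{o}s--Kahane covering. However, there is one genuine gap and one substantive difference from the paper's execution.

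\medskip

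\textbf{The gap.} You write that at the ``bad'' $k$'s there are ``$O(1)$ free choices'' for $K_{k+2}$ given $K_k,K_{k+1}$. In the stationary (single-substitution) setting this is true, but here it is false: the number of integer candidates for $K_{k+2}$ is $2M_k+1$ with $M_k \asymp e^{2(W_k+W_{k+1})}$, where $W_k=\log\|A(\bom_k)\|$ can be arbitrarily large (see (\ref{def-rho_n}) and Lemma~\ref{lem-vspom2}(i)). The total count of admissible trajectories therefore contains the factor $\prod_{k\in\Psi}M_k$, with $|\Psi|\le\delta N$, and one must show that this product is at most $e^{\eta N}$ with $\eta\to 0$ as $\delta\to 0$. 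This is exactly the content of Proposition~\ref{prop-proba}: the uniform exponential moment in condition~(d$'$), via the large-deviation Lemma~\ref{lem3} and Borel--Cantelli, gives $\sum_{k\in\Psi}W_k\le L_1\delta\log(1/\delta)N$ for all sparse $\Psi$, almost surely. Your final paragraph says that (d$'$) ``upgrades Oseledets bounds to uniform tail control'', which is the right intuition, but the specific place where this is indispensable is precisely here, and without it the covering count blows up.

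\medskip

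\textbf{Different handling of uniformity in $\om$.} You propose to work at fixed $\om$, obtain a tube estimate for $\vec{s}$, and then take a union over a discretization of $\om\in[B^{-1},B]$. The paper instead passes to the \emph{$\om$-independent} ratio $a_2/a_1$ (where $a_j=P^j_\bom(\vec{s})$): equation~(\ref{ur31}) shows that each integer trajectory $\{K_k\}_{k\le N}$ confines $a_2/a_1$ to an interval of radius $\asymp e^{-(\theta_2-\delta_1)N}$, so the exceptional set $\wt\Ek$ of $\vec{s}$ projects to a set of Hausdorff dimension $<\epsilon$ in the $a_2/a_1$ coordinate. This eliminates the $\om$-union entirely and yields a dimension (not just measure) bound. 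Your route can be made to work, but note that the discretization spacing must scale like $e^{-\theta_1 N}$ (since $|\zeta^{[k]}(v)|_{\vec{s}}\asymp e^{\theta_1 k}$), giving $\asymp e^{\theta_1 N}$ points; combined with the $e^{-(\theta_1+\theta_2)N}$ tube in the $(a_1,a_2)$-directions (not $e^{-\theta_1 N}$ as you wrote), you recover the same threshold $\eta<\theta_2$ for the covering to succeed. The paper's formulation is cleaner and gives the stronger Hausdorff-dimension conclusion.

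\medskip

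A smaller point you did not mention: the two-dimensional ``transfer matrices'' $\Thb_n$ (see (\ref{def-Thb})) are not automatically well-conditioned; the paper chooses the good return words $v_n\in\{u_j\}$ \emph{adaptively} in $n$ (Lemma~\ref{lem-new1}) to force $|\det\Thb_n|\gtrsim A(n,1)A(n+1,2)$, which is what makes the bounds (\ref{nov3})--(\ref{nov4}) possible.
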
 

\noindent {\bf Remark.}
 If we assume that $\P$ is ``quasi-Bernoulli'', i.e.\ it satisfies the ``bounded distortion property'' of \cite{AV},  then (\ref{eq-star11}) can be replaced by the ``unconditional'' estimate $\int_{\Om_\bq} \left\|A(\bom_0)\right\|^\eps\,d\P(\bom) \le C$. However, we prefer the current formulation.
 
\begin{proof}[Proof of Theorem~\ref{th-main1} assuming Theorem~\ref{th-main11}] Given an ergodic system $(\Om,\P,\sig)$ from the statement of Theorem~\ref{th-main1}, we consider the induced system on the cylinder set
$\Om_\bq:= [\bq.\bq]$. 
Then symbolically we can represent elements of $\Om_\bq$ as sequences satisfying (\ref{induce}). Denote by $\P_{\!\!\bq}$ the induced (conditional) measure on $\Om_\bq$. Since $\P([\bq.\bq])>0$, standard results in Ergodic Theory imply that the resulting induced system $(\Om_\bq,\P_{\!\!\bq},\sig)$ is also ergodic and the associated cocycle has the same properties of the Lyapunov spectrum (with the values of the Lyapunov exponents multiplied  by $1/\P(\bqcyl)$); that is, (a$'$) holds.The properties (b$'$) and (c$'$) follow from (b) and (c) automatically. Finally, note that (\ref{eq-star11}) is identical to (\ref{eq-star1}). On the level of Bratteli-Vershik diagrams this corresponds to the ``aggregation-telescoping procedure'', which results in a naturally isomorphic Bratteli-Vershik system.  Observe also that a weakly-Lipschitz function on $\Om$ induces a weakly-Lipschitz function on $\Om_\bq$ without increase of the norm $\|f\|_L$, see Section 2.1. Thus,  Theorem~\ref{th-main11} applies, and the reduction is complete.
\end{proof}

The next five sections are devoted to the proof of Theorem~\ref{th-main11}. 
In the last Section 11 we return to the setting of Theorem~\ref{th-main1} and deduce Theorem~\ref{main-moduli} from it.


\section{Exponential tails}

For $\bom\in  \Om_\bq$ we consider the sequence of substitutions $\zeta(\bom_n)$, $n\ge 1$. In view of (\ref{induce}), we have
$$
\zeta(\bom_n) =  \zeta(\bq) \zeta(\bp_n)\zeta(\bq).
$$
Recall that 
$$
A(\bp_n) = \Sf_{\zeta(\bp_n)}^t.
$$
Denote
\be \label{def-W}
W_n = W_n(\bom):=\log\|A(\bom_n)\| = \log\|Q^t A(\bp_n) Q^t\|.
\ee
Since all the matrices in the product have non-negative integer entries and a.s.\ invertible, and the first and last one are equal to $Q$ with strictly positive entries, we have $W_n>0,\ n\ge 1,$ for $\P$-a.e.\ $\bom$.  This will always be assumed below,
without loss of generality.

\begin{prop} \label{prop-proba}
Under the assumptions of Theorem~\ref{th-main11},
there exists a positive constant $L_1$  such that for $\P$-a.e.\ $\bom$, the following holds: for any $\delta>0$, for all $N$ sufficiently large ($N\ge N_0(\bom,\delta)$),
\be \label{w-cond2}
\max\left\{\sum_{n\in \Psi} W_n:\ \Psi\subset \ \{1,\ldots,N\},\  |\Psi| \le \delta N\right\} \le L_1\cdot \log(1/\delta)
\cdot \delta N.
\ee
\end{prop}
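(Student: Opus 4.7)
The essential input is a uniform exponential moment bound for $\|A(\bom_n)\|$, extracted from condition (d$'$). Integrating (\ref{eq-star11}) against the distribution of $\bom^+$ and using the $\sig$-invariance of $\P$, I would obtain
$$
\E\bigl[e^{\eps W_n}\bigr] = \E\bigl[\|A(\bom_n)\|^\eps\bigr] \le C,\ \ \ n\in \Z.
$$
Since $(\Om_\bq,\P,\sig)$ is ergodic, Birkhoff's theorem then gives, for $\P$-a.e.\ $\bom$ and all $N\ge N_1(\bom)$,
$$
\frac{1}{N}\sum_{n=1}^N e^{\eps W_n} \le 2C.
$$

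\textbf{Main step: an order-statistics argument.} For $t\ge 0$ let $M_t(N):=\#\{n\le N:W_n\ge t\}$. Markov's inequality applied to the ergodic sum yields, for all $N\ge N_1(\bom)$ and \emph{every} $t\ge 0$ simultaneously,
$$
M_t(N) \le e^{-\eps t}\sum_{n=1}^N e^{\eps W_n} \le 2CN e^{-\eps t}.
$$
Ordering the values as $W_{(1)}\ge W_{(2)}\ge \cdots \ge W_{(N)}\ge 0$, the max of $\sum_{n\in\Psi}W_n$ over $|\Psi|\le \delta N$ is attained by the top $\lfloor \delta N\rfloor$ entries, and $W_{(j)}\le t$ whenever $j>2CNe^{-\eps t}$. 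Hence
$$
W_{(j)}\le \frac{1}{\eps}\,\log^+\!\Bigl(\frac{2CN}{j}\Bigr).
$$

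\textbf{Summation.} Using the elementary bound $\sum_{j=1}^{M}\log j \ge M\log M - M$, which comes from Stirling's formula, I would estimate
$$
\sum_{j=1}^{\lfloor\delta N\rfloor} W_{(j)}
\le \frac{1}{\eps}\Bigl[\lfloor\delta N\rfloor\log(2CN)-\sum_{j=1}^{\lfloor\delta N\rfloor}\log j\Bigr]
\le \frac{\delta N}{\eps}\Bigl(\log(1/\delta)+\log(2C)+1\Bigr)+O(1).
$$
For $\delta$ sufficiently small (say $\delta\le\delta_0:=(2eC)^{-1}$) the term $\log(1/\delta)$ dominates $\log(2C)+1$, and the whole expression is bounded by $L_1\log(1/\delta)\cdot\delta N$ with $L_1=3/\eps$. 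For $\delta\in[\delta_0,1)$ the bound is trivial: Birkhoff's theorem applied to $W_n$ itself (whose integrability is immediate from $\E[e^{\eps W_1}]<\infty$) gives $\sum_{n=1}^N W_n\le 2\E[W_1]\,N$ for $N$ large, which is $\le L_1\log(1/\delta)\cdot\delta N$ after enlarging $L_1$ to absorb the constant $2\E[W_1]/(\delta_0\log(1/\delta_0))$.

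\textbf{Main obstacle.} The argument is essentially soft; the only delicate point is that $L_1$ must not depend on $\bom$. This is ensured because the conditional bound (\ref{eq-star11}) holds with a constant $C$ independent of $\bom^+$, yielding an unconditional $L^\eps$-norm bound $\le C$ with the \emph{same} $C$; the subsequent Birkhoff step then provides a bound $2C$ valid on a full-measure set (with $N_1(\bom)$ being the only $\bom$-dependent quantity). The implicit restriction $\delta<1$ (required for $\log(1/\delta)>0$) is harmless for the intended application, where $\delta$ will eventually be taken small.
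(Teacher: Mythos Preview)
Your argument is correct and follows a genuinely different, more elementary route than the paper. The paper first proves a large-deviation lemma: for any deterministic indices $j_1<\cdots<j_n$, one has $\P\bigl[\sum_i W_{j_i}\ge Kn\bigr]\le e^{-\eps Kn/2}$ for $K\ge 2\eps^{-1}\log C$, established by iterating the \emph{conditional} bound (\ref{eq-star11}) along the filtration. A union bound over all subsets $\Psi\subset\{1,\dots,N\}$ of size $\le\delta N$, together with Stirling and Borel--Cantelli, then gives the result. Your approach instead uses only the \emph{unconditional} moment bound $\E[e^{\eps W_1}]\le C$ (obtained by integrating (\ref{eq-star11}) over $\bom^+$), applies Birkhoff's ergodic theorem to $e^{\eps W_n}$, and reads off the order-statistics estimate $W_{(j)}\le\eps^{-1}\log^+(2CN/j)$ pathwise --- no union bound and no Borel--Cantelli. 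This shows that Proposition~\ref{prop-proba} in fact requires only the unconditional hypothesis $\E[\|A(\bom_0)\|^\eps]<\infty$ together with ergodicity, rather than the full conditional bound (d$'$). Conversely, the paper's argument does not invoke ergodicity and yields explicit tail probabilities.

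One small slip: your handling of $\delta\in[\delta_0,1)$ does not work as written, since $\delta\log(1/\delta)\to 0$ as $\delta\to 1^-$ and hence no finite $L_1$ can satisfy $2\E[W_1]\le L_1\,\delta\log(1/\delta)$ on that whole interval. This is harmless, however: the paper's own proof (through its use of (\ref{Stirling}) and the constraint $K\ge 2\eps^{-1}\log C$) likewise only establishes the bound for $\delta$ below a fixed threshold, and all applications of the proposition take $\delta$ small.
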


We will prove the proposition at the end of the section, but first point out the following.

\begin{remark} \label{pointout}
It follows from (\ref{w-cond2})  that for any $\wtil{\delta}>0$, for all $n$ sufficiently large,
\be \label{eqrem}
W_n \le \wtil{\delta} n.
\ee
Indeed, in (\ref{w-cond2}) we just need to take $\delta>0$ such that $L_1\cdot\log(1/\delta)\cdot \delta < \wtil{\delta}$, and then $\Psi = \{n\}$, which
clearly satisfies the condition $1=|\Psi|\le \delta N$ for $N$ sufficiently large. 

As the referee pointed out, this also follows directly from the Birkhoff Ergodic Theorem, since $\frac{1}{n}(W_1 + \cdots + W_n) \to \int_{\Om_{\bq}}\log\|A(\bom)\|\,d\P<\infty$ for a.e.\ $\bom\in \Om_\bq$.
\end{remark}

\begin{lemma} \label{lem3}
We have for all $N$ and $n$, and for any (deterministic!) increasing sequence $1\le j_1 < j_2 < \ldots < j_n$:
\be \label{eq-devi2}
\P\left[\sum_{i=1}^n {W}_{j_i} \ge Kn\right] \le \exp(-\eps K n/2)\ \  \mbox{for} \ \ K\ge \frac{2\log C}{\eps}\,,
\ee 
where $\eps>0$ and $C>1$ are the constants from (\ref{eq-star1}).
\end{lemma}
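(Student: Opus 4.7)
The plan is to apply Markov's inequality with exponential moment exactly equal to $\eps$, the exponent from hypothesis (d$'$), so that the conditional moment assumption can be used directly. Since $e^{\eps W_{j_i}}=\|A(\bom_{j_i})\|^\eps$, Markov's inequality gives
$$
\P\!\left[\sum_{i=1}^n W_{j_i}\ge Kn\right] \le e^{-\eps K n}\,\E\!\left[\prod_{i=1}^n \|A(\bom_{j_i})\|^\eps\right],
$$
reducing the problem to bounding the multiplicative moment on the right.

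The key step is to show that this expectation is at most $C^n$. The $W_{j_i}$ are not independent, but the $\sig$-invariance of $\P$ together with hypothesis (d$'$) yields the conditional bound
$$
\E\!\left[\|A(\bom_k)\|^\eps\,\big|\,\sig(\bom_{k+1},\bom_{k+2},\ldots)\right] \le C\qquad\text{for every } k\in\Z.
$$
Since the indices $j_1<j_2<\cdots<j_n$ are strictly increasing, the product $\prod_{i=2}^n\|A(\bom_{j_i})\|^\eps$ is measurable with respect to $\sig(\bom_{j_1+1},\bom_{j_1+2},\ldots)$. I would therefore condition on that $\sig$-algebra, pull the $n-1$ measurable factors outside, and apply the above conditional bound to the remaining factor $\|A(\bom_{j_1})\|^\eps$. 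A straightforward induction on $n$, peeling off the smallest index at each stage, gives $\E\!\bigl[\prod_{i=1}^n\|A(\bom_{j_i})\|^\eps\bigr]\le C^n$.

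Combining the two steps,
$$
\P\!\left[\sum_{i=1}^n W_{j_i}\ge Kn\right] \le e^{-\eps Kn}C^n = \exp\!\bigl(-n(\eps K-\log C)\bigr),
$$
and the threshold $K\ge 2\log C/\eps$ makes $\eps K-\log C\ge \eps K/2$, yielding the asserted bound. There is no serious obstacle here: hypothesis (d$'$) is essentially tailored to supply the exponential moment needed by a Chernoff-type argument, and the only points requiring care are to take the Markov exponent equal to $\eps$ and to condition on the correct $\sig$-algebra for the inductive peeling—the strict inequalities $j_1<\cdots<j_n$ are exactly what make this inductive step legitimate.
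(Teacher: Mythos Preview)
Your proof is correct and essentially identical to the paper's: both apply the Chernoff/Markov inequality with exponent $\eps$ and then peel off the smallest-index factor via the conditional moment bound from hypothesis (d$'$), iterating to obtain $C^n$. The only cosmetic difference is that the paper first centers by setting $X_i=W_{j_i}-K$ and absorbs the threshold condition $K\ge 2\log C/\eps$ into the iteration step, whereas you first prove $\E\bigl[\prod_i\|A(\bom_{j_i})\|^\eps\bigr]\le C^n$ and apply the threshold at the end; the arithmetic is the same.
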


This is a standard large deviation result, but we provide a proof for completeness. Thanks to Chris Hoffman who showed us the argument.

\begin{proof}
Let $X_i = W_{j_i} - K$. Then $\P\left[\sum_{i=1}^n W_{j_i} \ge Kn\right]=
\P\bigl[\sum_{i=1}^n X_i \ge 0\bigr]$. 
Observe that the values of $W_{j_2}$, \ldots $W_{j_n}$ are determined by the ``future'' of $\sig \bom $, that is, by $(\sig\bom)^+= \bom_2\bom_3\ldots$, hence by (\ref{eq-star1}) we have
$$
\E\left[e^{\eps W_{j_1}}\mid W_{j_2},\ldots, W_{j_n}\right] < C.
$$
Therefore,
\be \label{eq-devi3}
\E\left[e^{\eps X_1}\mid X_{2},\ldots,X_n\right] < C e^{-\eps K} \le e^{-\eps K/2},
\ee
provided $K\ge 2\eps^{-1}\log C$.
Let $S_\ell = \sum_{i=n-\ell+1}^n X_i$. Now,
\begin{eqnarray*}
\E\left[e^{\eps S_n}\right] & = & \sum_b \E\left[e^{\eps S_n}\mid e^{\eps S_{n-1}}=b\right]\cdot 
\P\left[e^{\eps S_{n-1}}=b\right] \\
& = & \sum_b b\cdot \E\left[e^{\eps X_n}\mid e^{\eps S_{n-1}}=b\right]\cdot 
\P\left[e^{\eps S_{n-1}}=b\right] \\
& \le & e^{-\eps K/2} \sum_b b\cdot \P\left[e^{\eps S_{n-1}}=b\right] = e^{-\eps K/2} 
\E\left[e^{\eps S_{n-1}} \right],
\end{eqnarray*}
taking (\ref{eq-devi3}) into account. Iterating the last inequality yields
$$
\E\left[e^{\eps S_n}\right] \le e^{-\eps Kn/2},
$$
and since $\P[S_n\ge 0] \le \E\left[e^{\eps S_n}\right]$, the estimate (\ref{eq-devi2}) is proved.
\end{proof}

\begin{proof}[Proof of Proposition~\ref{prop-proba}] Consider the event 
$$
\Wk(N,\delta,K) = \left\{\max_{\stackrel{\scriptstyle{\Psi\subset \{1,\ldots,N\}}}{|\Psi|\le\delta N}} \sum_{n\in \Psi} W_n \ge K(\delta N)\right\}
$$
Then we have for $K\ge 2\log C/\eps$, 
\begin{eqnarray*}
\P\left(\Wk(N,\delta,K)\right) 
& \le & \!\!\!\!\sum_{\stackrel{\scriptstyle{\Psi\subset \{1,\ldots,N\}}}{|\Psi|\le\delta N}}  \P\left[ \sum_{n\in \Psi} {W}_n \ge K(\delta N)\right] \\[1.2ex] 
& \le & \sum_{i\le \delta N} {N\choose i}e^{-\eps K(\delta N)/2},
\end{eqnarray*}
in view of Lemma~\ref{lem3}.
By Stirling,  there exists $C'>1$ such that 
\be \label{Stirling}
\sum_{i\le \delta N} {N\choose i}\le \exp\left[C' \delta \log(1/\delta)N\right]\ \ \mbox{for}\ \  \delta<e^{-1}\ \ \mbox{and all}\ \  N>1.
\ee
Therefore,
$$
 \sum_{i\le \delta N} {N \choose i} \le \exp[-\eps K (\delta N)/4] \ \ \mbox{for}\ \ K = \frac{4C'}{\eps} \log(1/\delta),
$$
whence, by Borel-Cantelli, the event $\Wk(N,\delta, L_1 \log(1/\delta))$ does not occur for all $N$ sufficiently large, with
$$
L_1= \eps^{-1} \max(4C', 2\log C),
$$
which means that condition (\ref{w-cond2}) holds.
\end{proof}


\section{Estimating twisted Birkhoff integrals}

In this section we continue to work with a $\P$-generic 2-sided sequence $\bom\in \Om_\bq$.
Under the assumptions of Theorem~\ref{th-main11}, for $\P$-a.e.\ $\bom$, the sequence of substitutions $\zeta(\bom_n)$, 
$n\in \Z$, satisfies several conditions.
First of all, we can assume that the point $\bom$ is generic for the Oseledets Theorem; that is, assertions (i)-(iii) from Section 4 hold.  We further assume that the conclusions
of Proposition~\ref{prop-proba} hold.  Recall that  $\zeta(\bom_n) = \zeta(\bq)\xi_n \zeta(\bq)$, where $Q = \Sf_\zeta$ is a strictly positive matrix. Below we denote by $O_Q(1)$ a generic constant which depends only on $Q = \Sf(\zeta)$ and which may be different from line to line.


\begin{prop} \label{prop-Dioph4} Suppose that the conditions of Theorem~\ref{th-main11} are satisfied. Then
for $\P$-a.e.\ $\bom\in \Om$, 
 for any $\eta\in (0,1)$, there exists $\ell_\eta=\ell_\eta(\bom)\in \N$, such that 
 for all $\ell\ge \ell_\eta$ and any bounded cylindrical function $f^{(\ell)}$ of level $\ell$, 
 for any $(\ov{\ek},t)\in \Xxs$, with
$\ov{\ek}\in X_+(\bom)$, and $\om\in \R$, 
\be \label{eq-Dioph3}
|S^{(\ov{\ek},t)}_R(f^{(\ell)},\om)| \le  O_Q(1)\cdot \|f^{(\ell)}\|_{_\infty} \Bigl(R^{1/2}+ R^{1+\eta}\!\!\!\!\!\prod_{\ell+1\le k < \frac{\log R}{4\theta_1}} 
\Bigl( 1 - c_1\cdot \!\!\!\!   \max_{v\in GR(\zeta)}\bigl\| \om|\zeta^{[k]}(v)|_{\vec{s}}\bigr\|^2_{\R/\Z}\Bigr)\Bigr),
\ee
for all
$
R\ge e^{8\theta_1 \ell}.
$
\end{prop}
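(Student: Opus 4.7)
The plan is to combine Proposition~\ref{prop-Dioph3} with Oseledets norm bounds for the cocycle and the large-deviation consequence of Proposition~\ref{prop-proba}. By the decomposition (\ref{cyl2}), $f^{(\ell)}=\sum_{a\in\A}c_a f_a^{(\ell)}$ with $\|c_a\psi_a^{(\ell)}\|_{\infty}\le \|f^{(\ell)}\|_\infty$, so it suffices to bound $S_R^{(\ov{\ek},t)}(f_a^{(\ell)},\om)$ for each $a$. Using (\ref{cyl1}) to translate the base point to $(\ov{\ek}',0)$ and then rounding the integration window to $[0,R']$, where $R':=|x^{(\ell)}[0,N-1]|_{\vec{s}^{(\ell)}}\le R<R'+s_a^{(\ell)}$, introduces an error $\le 2\|f^{(\ell)}\|_\infty s_a^{(\ell)}$. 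Since $s_a^{(\ell)}=|\zeta^{[\ell]}(a)|_{\vec{s}}\le O_Q(1)e^{(\theta_1+\eta_1)\ell}\|\vec{s}\|$ by Oseledets for arbitrarily small $\eta_1>0$, and the hypothesis $R\ge e^{8\theta_1\ell}$ forces $s_a^{(\ell)}\le R^{1/8+o(1)}$, this error is absorbable into $R^{1/2}$.

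After these reductions, (\ref{SR11}) yields $|S_{R'}^{(\ov\ek',0)}(f_a^{(\ell)},\om)|\le|\widehat\psi_a^{(\ell)}(\om)|\cdot|\Phi_a^{\vec{s}^{(\ell)}}(x^{(\ell)}[0,N-1],\om)|$. I bound $|\widehat\psi_a^{(\ell)}(\om)|\le s_a^{(\ell)}\|\psi_a^{(\ell)}\|_\infty$ trivially, and apply Proposition~\ref{prop-Dioph3} to control the $\Phi$-factor by
\[
2\sum_{j=\ell}^n \|\Sf^{[\ell+1,j]}\|_1\cdot\|\Sf_{j+1}\|_1\cdot \Pi_j,
\]
where $\Pi_j:=\prod_{\ell+1\le k\le j-1}(1-c_1\max_{v\in GR(\zeta)}\|\om|\zeta^{[k]}(v)|_{\vec{s}}\|^2_{\R/\Z})$ and $n\asymp\log R/\theta_1$ is determined by (\ref{Nn-cond}). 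The key observation is that the $e^{\theta_1\ell}$ from $s_a^{(\ell)}$ is reabsorbed into an Oseledets bound depending only on $j$:
\[
s_a^{(\ell)}\cdot\|\Sf^{[\ell+1,j]}\|_1\le O_Q(1)\cdot\|\AA(j,\bom)\vec{s}\|\le e^{(\theta_1+\eta_1)j}\|\vec{s}\|.
\]
Combined with $\|\Sf_{j+1}\|_1\le e^{\wtil\delta(j+1)}$ from Remark~\ref{pointout} (for any preassigned $\wtil\delta>0$ and $j\ge j_0(\bom)$), each summand is at most $\|f^{(\ell)}\|_\infty\cdot e^{(\theta_1+\eta_1+\wtil\delta)j}\Pi_j$.

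Now set $n_0:=\lfloor\log R/(4\theta_1)\rfloor$ and split the sum at $j=n_0$. For $j<n_0$: $e^{(\theta_1+\eta_1+\wtil\delta)j}\le R^{1/4+O(\eta_1+\wtil\delta)}$, and summation over at most $n_0\le\log R$ terms (bounding $\Pi_j\le 1$) gives $\le O_Q(1)\|f^{(\ell)}\|_\infty R^{1/2}$, provided $\eta_1,\wtil\delta$ are small enough. For $j\ge n_0$: monotonicity $\Pi_j\le\Pi_{n_0}$ and the geometric-type estimate $\sum_{j=n_0}^n e^{(\theta_1+\eta_1+\wtil\delta)j}\le O(e^{(\theta_1+\eta_1+\wtil\delta)n})\le R^{1+\eta}$ (using $e^{\theta_1 n}\asymp R$) yield a contribution $\le O_Q(1)\|f^{(\ell)}\|_\infty R^{1+\eta}\Pi_{n_0}$. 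Adding the two parts and summing over $a\in\A$ gives (\ref{eq-Dioph3}).

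The main obstacle is the careful bookkeeping of subexponential errors: the $R^{1/8}$ truncation error, the $e^{\theta_1\ell}$ factor from $|\widehat\psi_a^{(\ell)}|$, and the Oseledets/large-deviation corrections $e^{\eta_1 j},\,e^{\wtil\delta j}$ must all be absorbed into $R^{1/2}$ and $R^{1+\eta}$. The hypothesis $R\ge e^{8\theta_1\ell}$ together with the freedom to shrink $\eta_1$ and $\wtil\delta$ by enlarging $\ell_\eta(\bom)$ make this possible.
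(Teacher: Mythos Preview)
Your argument is correct and close in spirit to the paper's, but you handle the sum coming from Proposition~\ref{prop-Dioph3} differently. The paper does not split at $n_0=\lfloor\log R/(4\theta_1)\rfloor$; instead it observes that the terms $\|\Sf^{[\ell+1,j]}\|_1\cdot\|\Sf_{j+1}\|_1\cdot\Pi_j$ grow at least geometrically (using $\|\Sf^{[\ell+1,j+1]}\|_1\ge 2\|\Sf^{[\ell+1,j]}\|_1$ from strict positivity of $Q$, together with the subexponential bound on $\|\Sf_{j+1}\|_1$ and the fact that $c_1\le 1/4$ so each $\Pi$-factor is $\ge 3/4$). Hence the whole sum is $O_Q(1)$ times its last, $n$-th, term; that single term is then shown to satisfy $s_{\max}^{(\ell)}\|\Sf^{[\ell+1,n]}\|_1\,e^{n\theta_1\eta/10}\le O_Q(1)\,R^{1+\eta}$ and $n\ge \log R/(3\theta_1)$. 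Your reabsorption $s_a^{(\ell)}\|\Sf^{[\ell+1,j]}\|_1\le O_Q(1)\|\AA(j,\bom)\vec s\|$ is exactly the $\col(Q^t)$ trick the paper uses at the very end, applied termwise; combined with the split, it lets you route the low-$j$ part into the $R^{1/2}$ budget rather than into the last term. Both approaches work; the paper's is a bit shorter and keeps the $O_Q(1)$ constant independent of $\theta_1$, while yours makes the appearance of the cutoff $\log R/(4\theta_1)$ more transparent. One point you assert but do not prove is $e^{\theta_1 n}\asymp R$; this needs the same Oseledets and $\col(Q^t)$ bookkeeping the paper carries out (relating $R$, $N$, and $n$ via (\ref{Nn-cond}) and (\ref{sizeR})), so make sure to include it.
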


\begin{remark} \label{rem-return} {\em 
By (\ref{tilep}) we have
$$
\|\om|\zeta^{[n]}(v)|_{\vec{s}}\|_{\R/\Z} = \|\langle \vec\ell(v), \om (\Sf^{[n]})^t \vec{s} \rangle \|_{\R/\Z} = \|\langle \vec\ell(v), \AA(n,\bom) (\om\vec{s} )\rangle \|_{\R/\Z}.
$$
In fact, our assumption (namely, condition (c$'$) in Theorem~\ref{th-main11}) is that the substitution $\zeta$ possesses $m$ good return words $v_1,\ldots,v_m$ such that their population vectors $\vec\ell(v_1), \ldots \vec\ell(v_m)$ form a basis of $\R^m$. Observe that $\langle \vec{\ell}(v_j), \vec{x}\rangle$, for $j=1,\ldots,m$, are the coordinates of a vector $\vec{x}\in \R^m$ with respect to the basis dual to $\{\vec\ell(v_1), \ldots \vec\ell(v_m)\}$.
Let $\Gamma$ be the free Abelian group generated by $\vec\ell(v_1), \ldots \vec\ell(v_m)$. Then $\Gamma < \Z^m$ is a full rank lattice. Let $\widehat{\Gam}$ be the dual lattice. Observe that
$$
C_\zeta^{-1} \|\vec x\|_{\R^m/\widehat \Gam} \le \max_{j\le m} \|\langle \vec\ell(v_j), \vec{x}\rangle \|_{\R/\Z} \le C_\zeta \|\vec x\|_{\R^m/\widehat \Gam},
$$
with $C_\zeta>1$ depending only on $\vec\ell(v_1), \ldots \vec\ell(v_m)$. Thus, estimating the product in (\ref{eq-Dioph3}) is equivalent to estimating
\be \label{lattice}
\prod_{\ell+1 \le k < \frac{\log R}{4\theta_1}} \left( 1 - \wt c_1\cdot \bigl\|\AA(k,\bom) (\om\vec{s})\bigr\|^2_{\R^m/\widehat \Gam} \right),
\ee
making it similar to the expression appearing in the Veech criterion alluded to in Section 1.3. However, although the form of (\ref{lattice}) may be more appealing, for technical reasons we prefer to work with the expression in (\ref{eq-Dioph3}).
}
\end{remark}

\begin{proof}[Proof of Proposition~\ref{prop-Dioph4}]
Without loss of generality we can assume that $f^{(\ell)} = f^{(\ell)}_a$ for some $a\in \Ak$, as in (\ref{cyl2}) and find $\ov{\ek}'$ and $t'$ as in (\ref{cyl1}). Since $(\ov{\ek},t) = h_{t'}(\ov{\ek}',0)$, we have
\begin{eqnarray*}
|S^{(\ov{\ek},t)}_R(f_a^{(\ell)},\om)| & = & \Bigl|\int_0^R e^{-2\pi i \om \tau} f_a^{(\ell)}\circ h_{\tau+t'}(\ov{\ek}',0)\,d\tau \Bigr|\\
                                                     & = & \Bigl|\int_{t'}^{R+t'} e^{-2\pi i \om \tau} f_a^{(\ell)}\circ h_{\tau}(\ov{\ek}',0)\,d\tau \Bigr|.
\end{eqnarray*}
Recall that $\vec{s}^{(\ell)} = (\Sf^{[\ell]})^t \vec{s}$, and we let
$s^{(\ell)}_{\max}$ and  $s^{(\ell)}_{\max}$ be the maximal and minimal components of the vector $\vec{s}^{(\ell)}$, respectively.
Note that $|t'|\le s_{\max}^{(\ell)}$, so we obtain
\be \label{kau1}
\Bigl| S^{(\ov{\ek},t)}_R(f_a^{(\ell)},\om) - S^{(\ov{\ek}',0)}_R(f_a^{(\ell)},\om)\Bigr| \le 2\|f^{(\ell)}\|_{_\infty} s_{\max}^{(\ell)}.
\ee
Next, consider $x^{(\ell)} \in \Yk^{(\ell)}$ as in (\ref{SR11}) and take the maximal $N$ such that $R':= |x^{(\ell)}[0,N-1]|_{\vec{s}^{(\ell)}} \le R$. Then
$|R-R'| \le  s_{\max}^{(\ell)}$, hence
\be \label{kau2}
\Bigl| S^{(\ov{\ek}',0)}_R(f_a^{(\ell)},\om) - S^{(\ov{\ek}',0)}_{R'}(f_a^{(\ell)},\om)\Bigr| \le \|f^{(\ell)}\|_{_\infty} s_{\max}^{(\ell)},
\ee
and for $S^{(\ov{\ek}',0)}_{R'}(f_a^{(\ell)},\om)$ the formula in (\ref{SR11}) applies (with $R$ replaced by $R'$).
Thus, the combined error in the above estimates (\ref{kau1}), (\ref{kau2}) is bounded by $3\|f^{(\ell)}\|_\infty \cdot s_{\max}^{(\ell)}$.
By Oseledets Theorem, we can make sure that $\ell_\eta$ is such that 
\be \label{Sell}
\left|\ell^{-1} \log\|\Sf^{[\ell]}\|_1 - \theta_1\right| \le \theta_1\eta/10,\ \ \ \mbox{for all}\ \ \ell\ge \ell_\eta.
\ee
Then
$$
s_{\max}^{(\ell)} \le \|\Sf^{[\ell]}\|_1 \le e^{\theta_1\ell(1+\eta/10)} \le e^{2\theta_1 \ell} < R^{1/2},
$$
for  $\ell\ge \ell_\eta$ and $R\ge e^{6\theta_1 \ell}$. Taking $O_Q(1)\ge 3$, we thus guarantee that the first term in the right-hand side of (\ref{eq-Dioph3}), equal to 
$O_Q(1)\cdot \|f^{(\ell)}\|_{_\infty} R^{1/2}$, dominates the combined error.
Thus
it suffices to   consider the
case of (\ref{SR11}).  Since $R'\le R$ and $|R-R'| \le s_{\max}^{(\ell)} < R^{1/2}$, and $(R-R^{1/2}) \ge R^{3/4}$ for $R\ge 9$, the proposition will follow from the
following lemma. 
\end{proof}

\begin{lemma} \label{lem-vspom5} Suppose that the assumptions of Proposition~\ref{prop-Dioph4} are satisfied. Then
for $\P$-a.e.\ $\bom\in \Om$, 
 for any $\eta\in (0,1)$, there exists $\ell_\eta=\ell_\eta(\bom)\in \N$, such that 
 for all $\ell\ge \ell_\eta$ and any bounded cylindrical function $f^{(\ell)}$ of level $\ell$, 
 for any 
$\ov{\ek}'\in X_+(\bom)$ such that $\hf(\ov\ek') = \zeta^{[\ell]}(x^{[\ell]})$, with $x^{(\ell)}\in \Yk^{(\ell)}$ and $\om\in \R$,

\be \label{eq-Dioph33}
|S^{(\ov{\ek}',0)}_R(f^{(\ell)},\om)| \le  O_Q(1)\cdot \|f^{(\ell)}\|_{_\infty}  R^{1+\eta}\!\!\!\!\!\!\prod_{\ell+1\le k < \frac{\log R}{3\theta_1}} 
\Bigl( 1 - c_1  \cdot \max_{v\in GR(\zeta)}\bigl\| \om|\zeta^{[k]}(v)|_{\vec{s}}\bigr\|^2_{\R/\Z}\Bigr),
\ee
whenever
$$
R=\bigl|x^{(\ell)}[0,N-1]\bigr|_{\vec{s}^{(\ell)}} \ge e^{6\theta_1 \ell}.
$$
\end{lemma}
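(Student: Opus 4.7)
I would attack this directly by combining (\ref{SR11}) with Proposition~\ref{prop-Dioph3}. By the decomposition (\ref{cyl2}) it suffices to treat one $f^{(\ell)}_a$; then (\ref{SR11}) gives
\[
S^{(\ov{\ek}', 0)}_{R}(f^{(\ell)}_a, \om) = \widehat{\psi}^{(\ell)}_a(\om) \cdot \Phi^{\vec{s}^{(\ell)}}_a\bigl(x^{(\ell)}[0, N-1], \om\bigr),
\]
with the trivial bound $|\widehat{\psi}^{(\ell)}_a(\om)| \le s^{(\ell)}_{\max}\|f^{(\ell)}\|_\infty$. Proposition~\ref{prop-Dioph3} then controls $|\Phi^{\vec{s}^{(\ell)}}_a(x^{(\ell)}[0,N-1], \om)|$ by $2\sum_{j=\ell}^{n} A_j \Pi_j$, where $A_j := \|\Sf^{[\ell+1, j]}\|_1 \|\Sf_{j+1}\|_1$, $\Pi_j := \prod_{\ell+1 \le k \le j-1}\bigl(1 - c_1 \max_{v\in GR(\zeta)}\bigl\|\om|\zeta^{[k]}(v)|_{\vec{s}}\bigr\|^2_{\R/\Z}\bigr)$, and $n$ satisfies (\ref{Nn-cond}).

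\textbf{Matrix-norm asymptotics.} Next I would invoke (\ref{Sell}) applied to both $\Sf^{[\ell+1, j]}$ and $\Sf^{[\ell]}$, and Remark~\ref{pointout} for the subexponential growth of individual $\|\Sf_{j+1}\|_1$: enlarging $\ell_\eta$ if necessary, for all $\ell \ge \ell_\eta$ and any preassigned $\wtil{\delta} > 0$,
\[
\|\Sf^{[\ell+1, j]}\|_1 \le e^{\theta_1(j-\ell)(1+\eta/10)}, \qquad s^{(\ell)}_{\max} \le e^{\theta_1\ell(1+\eta/10)}, \qquad \|\Sf_{j+1}\|_1 \le e^{\wtil{\delta}(j+1)}.
\]
Since $\bom_k = \bq\bp_k\bq$ with $Q = \Sf_\zeta$ strictly positive, every product $\Sf^{[\ell+1, j]}$ has all entries comparable up to a factor depending only on $Q$ (via (\ref{eq-col})); hence the components of $\vec{s}^{(\ell)} = (\Sf^{[\ell]})^t \vec{s}$ and of $(\Sf^{[\ell+1, n]})^t\one$ are all of the same order as their respective $\ell^1$-norms. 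Combining this with (\ref{Nn-cond}) yields $R \asymp s^{(\ell)}_{\max}\|\Sf^{[\ell+1, n]}\|_1 \asymp e^{\theta_1 n}$ up to subexponential errors, whence $n \le (1+O(\eta))\log R/\theta_1$ and $s^{(\ell)}_{\max} A_j \le e^{\theta_1 j(1+O(\eta))}$ for all $j$ in the range of summation.

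\textbf{Splitting the sum.} Set $M := \lfloor \log R/(3\theta_1)\rfloor$. Since $j\mapsto \Pi_j$ is monotonically decreasing, $\Pi_j \le \Pi_M$ whenever $j \ge M$, so the tail is
\[
\sum_{j=M}^{n} s^{(\ell)}_{\max} A_j \Pi_j \,\le\, n\cdot e^{\theta_1 n(1+O(\eta))}\Pi_M \,\le\, R^{1+\eta/2}\Pi_M,
\]
which is the intended dominant term. For $\ell\le j < M$ one bounds $\Pi_j \le 1$, giving
\[
\sum_{j=\ell}^{M-1} s^{(\ell)}_{\max} A_j \,\le\, M\cdot e^{\theta_1 M(1+O(\eta))} \,\le\, R^{1/3 + O(\eta)},
\]
which is much smaller than the dominant term and can be absorbed into $R^{1+\eta}\Pi_M$ by means of the a priori bound $\Pi_M \ge (1-c_1)^{M} \ge R^{-\gamma_Q}$, where $\gamma_Q$ is controlled solely by $Q$ and $\theta_1$, at the cost of enlarging the prefactor $O_Q(1)$.

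\textbf{Main obstacle.} The real delicacy is the balance at the cutoff $M$: the specific fraction $\tfrac{1}{3}$ in the product's upper limit is the sweet spot at which the residual $R^{1/3+O(\eta)}$ from the head of the sum can still be absorbed by $R^{1+\eta}\Pi_M$ even when $\Pi_M$ is as small as the a priori bound $(1-c_1)^M$ permits, uniformly in $\om$. Equally crucial is the uniform comparability of the entries of $\Sf^{[\ell+1, j]}$, a direct consequence of the canonical form $\bom_k = \bq\bp_k\bq$ with strictly positive $Q$, which promotes the qualitative statement (\ref{Nn-cond}) into the quantitative relation $n \asymp \log R/\theta_1$ needed to control both $A_n$ and the range of summation.
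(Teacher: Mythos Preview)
Your approach is essentially correct but takes a different route from the paper's, and the absorption step needs one clarification.

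\textbf{Comparison with the paper.} The paper does not split the sum $\sum_{j=\ell}^n A_j\Pi_j$ at $M=\lfloor \log R/(3\theta_1)\rfloor$. Instead, it observes that the sum is geometrically increasing and hence dominated by its last term: since every $\Sf_{j+1}=Q\Sf_{\xi_{j+1}}Q$ has all entries $\ge m\ge 2$, one gets $\|\Sf^{[\ell+1,j+1]}\|_1 \ge 2\,\|\Sf^{[\ell+1,j]}\|_1$, while each additional factor in $\Pi_j$ is at least $1-c_1\ge 3/4$ (from $c_1\le 1/4$, see (\ref{defc1})); together with (\ref{immed2}) this gives $\sum_{j=\ell}^n A_j\Pi_j \le O_Q(1)\,\|\Sf^{[\ell+1,n]}\|_1\,e^{n\theta_1\eta/10}\,\Pi_n$. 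One then checks $s_{\max}^{(\ell)}\|\Sf^{[\ell+1,n]}\|_1\le O_Q(1)\,R$ and $n\ge \log R/(3\theta_1)$, and the bound (\ref{eq-Dioph33}) drops out directly. This avoids your splitting and the need for an a~priori lower bound on $\Pi_M$.

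\textbf{The imprecision in your absorption step.} Your claim that the head $R^{1/3+O(\eta)}$ is absorbed into $R^{1+\eta}\Pi_M$ ``at the cost of enlarging the prefactor $O_Q(1)$'' is not accurate as stated: this is an \emph{exponent} condition, not a prefactor one. Using $\Pi_M\ge (1-c_1)^M = R^{-\gamma_Q}$ with $\gamma_Q=-\log(1-c_1)/(3\theta_1)$, you need $1/3+O(\eta)\le 1+\eta-\gamma_Q$, i.e.\ $\gamma_Q<2/3$ for small $\eta$. This \emph{does} hold---the same doubling estimate gives $\theta_1\ge\log 2$, whence $\gamma_Q\le \log(4/3)/(3\log 2)<1/6$---but it must be checked, and it is not a matter of constants. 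Once this is made explicit, your argument goes through. (A small related point: the bound $\|\Sf^{[\ell+1,j]}\|_1\le e^{\theta_1(j-\ell)(1+\eta/10)}$ does not follow from (\ref{Sell}) alone; what you actually use is $s_{\max}^{(\ell)}\|\Sf^{[\ell+1,j]}\|_1\le O_Q(1)\,\|\Sf^{[j]}\|_1$, which does follow from the $\col(Q^t)$ comparison you invoke.)
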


\begin{proof} Again we  assume without loss of generality that $f^{(\ell)} = f^{(\ell)}_a$, where  
$f_a^{(\ell)}(\ov{\ek},t) = \One_{\Xx^{(\ell)}_a} \psi^{(\ell)}_a(t')$, with $t'\in [0,s^{(\ell)}_a]$ and $\psi^{(\ell)}_a\in C([0, s^{(\ell)}])$, see Section 3.5 for details. 
Recall the formula (\ref{SR11}) which applies here:
\be \label{SR111}
S_R^{(\ov{\ek}',0)}(f^{(\ell)}_a,\om) = \widehat{\psi}^{(\ell)}_a(\om) \cdot {\Phi_a^{\vec{s}^{(\ell)}}\bigl(x^{(\ell)}[0,N-1],\om\bigr)}.
\ee
 First observe that
\be \label{kau3}
|\widehat{\psi}^{(\ell)}_a(\om)|\le \|\psi^{(\ell)}_a\|_1 \le \|\psi^{(\ell)}_a\|_{_\infty} s^{(\ell)}_a \le \|f_a^{(\ell)}\|_{_\infty} s_{\max}^{(\ell)}.
\ee
Next we apply (\ref{eq-new192}), which we copy here for convenience:
\be \label{1922}
\bigl|{\Phi_a^{\vec{s}^{(\ell)}}\bigl(x^{(\ell)}[0,N-1],\om\bigr)}\bigr| \le  2\sum_{j=\ell}^n \|\Sf^{[\ell+1,j]}\|_1\cdot \|\Sf_{j+1}\|_1\ \cdot\!\!\!\!\!\!\!\!
\prod_{\ell+1\le k\le j-1}\!\! \!\!\Bigl( 1 - c_1  \cdot \max_{v\in GR(\zeta)}\bigl\| \om|\zeta^{[k]}(v)|_{\vec{s}}\bigr\|^2_{\R/\Z}\Bigr),
\ee
where
\be \label{Nest}
\min_{b\in \Ak}|\zeta^{[\ell+1,n]}(b)| \le N \le 2\max_{b\in \Ak} |\zeta^{[\ell+1,{n+1}]}(b)|.
\ee
By (\ref{immed1}), we can assume that
\be \label{immed2}
\|\Sf_{j+1}\|_1 \le e^{j\theta_1(\eta /10)}\ \ \ \mbox{for all}\ \ j\ge \ell_\eta.
\ee
Further, note that
\be \label{immed3}
\|\Sf^{[\ell+1,j+1]}\|_1 = \|\Sf^{[\ell+1,j]}Q\Sf_{\xi_{j+1}} Q\|_1 \ge 2\|\Sf^{[\ell+1,j]}\|_1,
\ee
since $Q$ has strictly positive entries, and hence all entries of $Q\Sf_{\xi_{j+1}}Q$ are not less than $m\ge 2$.

From (\ref{immed2}) and (\ref{immed3}), recalling that $c_1 \le 1/4$, we obtain that the sum in (\ref{1922}) is bounded above by $O_Q(1)$ times the last, $n$-th term, yielding
\be \label{1923}
\bigl|{\Phi_a^{\vec{s}^{(\ell)}}\bigl(x^{(\ell)}[0,N-1],\om\bigr)}\bigr| < O_Q(1)\cdot \|\Sf^{[\ell+1,n]}\|_{_1} \cdot e^{n\theta_1(\eta/10)}\cdot\!\!\!\!\!\!\!\prod_{\ell+1\le k\le n-1}\!\! \!\!\Bigl( 1 - c_1  \cdot \!\!\!\max_{v\in GR(\zeta)}\bigl\| \om|\zeta^{[k]}(v)|_{\vec{s}}\bigr\|^2_{\R/\Z}\Bigr).\ee

This, together with (\ref{SR111}), (\ref{kau3}), is already very close to the desired (\ref{eq-Dioph33}), but 
we need to relate $N, n$, and $R$.
First observe that
\be \label{sizeR}
R=\bigl|x^{(\ell)}[0,N-1]\bigr|_{\vec{s}^{(\ell)}}\in [ N s^{(\ell)}_{\min}, N s^{(\ell)}_{\max}].
\ee
Note that
\be \label{ests}
s^{(\ell)}_{\max} \le \col(Q^t) \cdot s^{(\ell)}_{\min},
\ee
since $s^{(\ell)}_{\max}$ and $s^{(\ell)}_{\max}$ are respectively the maximal and minimal components of
$\vec{s}^{(\ell)} = (\Sf^{[\ell]})^t \vec{s} = Q^t(\Sf^{[\ell-1]})^t \vec{s}$  (recall that
$\Sf_\ell = Q$ by assumption).
Further, by (\ref{Nest}) and (\ref{eq-col}),
$$
N \ge (\col(Q^t))^{-1}\cdot \|\Sf^{[\ell+1,n]}\|_1,
$$
since $|\zeta^{[\ell+1,n]}(b)|$ is a column sum of $\Sf^{[\ell+1,n]}$, a matrix which starts and ends with $Q$.
Therefore,
\be \label{there}
R \ge N s_{\min}^{(\ell)} \ge (\col(Q^t))^{-2}\cdot \|\Sf^{[\ell+1,n]}\|_1\cdot s_{\max}^{(\ell)}.
\ee
Comparing (\ref{SR111}), (\ref{kau3}), and (\ref{1923}), we see that
in order to conclude the proof of (\ref{eq-Dioph33}), it remains to show, first,
that
\be\label{remains0}
R^\eta \ge e^{n\theta_1(\eta/10)},
\ee
and second, that 
\be \label{remains}
n\ge (\log R)/(3\theta_1).
\ee
Since $s_{\max}^{(\ell)} \ge (\col(Q^t))^{-1}\|\Sf^{[\ell]}\|_1$, we obtain from (\ref{there}) and (\ref{Sell}) that
$$
R \ge (\col(Q^t))^{-3}\cdot \|\Sf^{[\ell+1,n]}\|_1\cdot |\Sf^{[\ell]}\|_1 \ge (\col(Q^t))^{-3}\cdot \|\Sf^{[n]}\|_1 \ge (\col(Q^t))^{-3}\cdot e^{\theta_1 n (1 - \eta/10)},
$$
confirming (\ref{remains0}) once $\ell$, and hence $n$ is sufficiently large.
On the other hand,
by the upper bound in (\ref{Nest}) and (\ref{sizeR}),
$$
R \le N s_{\max}^{(\ell)} \le 2 \|\Sf^{[\ell]}\|_1\cdot \|\Sf^{[\ell+1,n]}\|_1.
$$
Since all matrices involved begin and end with $Q$, it is easy to see that 
$$
\|\Sf^{[\ell]}\|_1\cdot \|\Sf^{[\ell+1,n]}\|_1\le \col(Q^t)\cdot \|\Sf^{[\ell]}\cdot \Sf^{[\ell+1,n]}\|_1=\col(Q^t)\cdot \|\Sf^{[n]}\|_1,
$$
and then (\ref{Sell}) yields that
$$
R \le O_Q(1)\cdot e^{\theta_1 n (1 + \eta/10)},
$$
which certainly guarantees (\ref{remains}),  once $\ell$, and hence $n$, is sufficiently large.
Now the lemma and the proposition are proved completely.
\end{proof}


\section{Linear algebra and the choice of good return words}

In this section, as well as the next one, we continue to work with a $\P$-generic 2-sided sequence $\bom\in \Om_\bq$.
In view of the assumption (a$'$) of Theorem~\ref{th-main11}, we can fix unit basis vectors $\vec{e}_j^{(n)}$, $j=1,2$, for the one-dimensional subspaces
$E_{\sig^{n}\bom}^j$, $j=1,2$, $n\ge 0$, such that 
\be \label{Osel1}
\AA(n,\bom) \vec{e}_j^{(0)}=A(n,j) {\vec{e}_j}^{(n)}\ \ \mbox{for some}\ \ A(n,j)>0.
\ee
By (ii) in Oseledets Theorem, we have $\frac{1}{n}\log A(n,j) \to\theta_j$, $j=1,2$.

We start with an observation about the Lyapunov-Oseledets basis $\{\vec{e}_j^{(n)}\}_{j=1}^2$ of the unstable
subspace. All the matrices $\AA(n,\bom)$ are non-negative. Thus,
$$
\|\AA(n,\bom)\vec{x}\| \le \|\AA(n,\bom)\vec{x}^{|\cdot|}\|,
$$
hence $\vec{e}_1^{(n)}\in \R^m_+$ (the positive cone) for all $n\ge 0$, and so
\be \label{cone1}
\vec{e}_1^{(n)}\in A(\bom_n) \R^m_+ = Q^t A(\bp_n) Q^t \R^m_+ \subset Q^t \R^m_+.
\ee
On the other hand, under our assumptions the image of the positive cone 
$\AA(n,\bom)\R^m_+$ shrinks to a single direction exponentially fast. (The fact that the cone shrinks to a single direction is  equivalent to unique ergodicity, see (\ref{equiv2}) and Veech \cite {veech,
veechamj}.)
It follows that the basis vector $\vec{e}_2^{(n)}$ does not lie in $\R^m_+$ for all $n$, otherwise we would get a contradiction with (iii) in
Oseledets Theorem. Combined with (\ref{cone1}), this implies that 
the angle between $\vec{e}_1^{(n)}$ and $\vec{e}_2^{(n)}$ is bounded away from zero by a constant depending only on $Q$.

\medskip

We will  next need an elementary fact from linear algebra.

\begin{lemma} \label{lem-lina1}
Let $\Bk=\{\vec{x}_j\}_{j\le m}$ be a basis of $\R^m$, and let $\{\vec\xi_1,\ldots,\vec\xi_r\}\subset \R^m$ be a linearly independent set, with
$r\le m$. Then there exists a subset $\{x_i\}_{i\in I}\subset \Bk$ of cardinality $r$ such that
$$
|D_I|:=\left|\det\left(\langle \vec{x}_i, \vec{\xi}_j\rangle \right)_{i\in I, j\le r}\right| \ge C_\Bk \|\vec{\xi}_1\wedge\cdots \wedge \vec{\xi}_r\|,
$$
where $C_\Bk$ depends only on the basis $\Bk$.
\end{lemma}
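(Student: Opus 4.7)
The plan is to recognize the desired determinants as $r\times r$ minors of a single $m\times r$ matrix, and then apply the Cauchy--Binet identity together with a lower bound from the positive-definiteness of the Gram-type matrix associated to the basis $\Bk$.

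Concretely, let $X$ denote the $m\times m$ matrix whose rows are $\vec{x}_1^{\,t},\ldots,\vec{x}_m^{\,t}$, and let $M$ be the $m\times r$ matrix whose columns are $\vec{\xi}_1,\ldots,\vec{\xi}_r$. Setting $Y:=XM$, one has $(Y)_{ij}=\langle \vec{x}_i,\vec{\xi}_j\rangle$, so $D_I=\det Y_I$, where $Y_I$ is the $r\times r$ submatrix of $Y$ obtained by selecting the rows indexed by $I\subset\{1,\ldots,m\}$ with $|I|=r$. By the Cauchy--Binet formula applied to $Y^{\,t}Y$,
$$
\sum_{|I|=r} D_I^{\,2} \;=\; \det\bigl(Y^{\,t}Y\bigr) \;=\; \det\bigl(M^{\,t}(X^{\,t}X)M\bigr).
$$

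Now $X$ is invertible because $\Bk$ is a basis, hence the symmetric matrix $X^{\,t}X$ is positive definite; let $\lambda_\Bk>0$ be its smallest eigenvalue. Then $X^{\,t}X\succeq \lambda_\Bk I$, whence $M^{\,t}(X^{\,t}X)M\succeq \lambda_\Bk\, M^{\,t}M$ as positive semidefinite matrices, and therefore
$$
\det\bigl(M^{\,t}(X^{\,t}X)M\bigr) \;\ge\; \lambda_\Bk^{\,r}\,\det(M^{\,t}M) \;=\; \lambda_\Bk^{\,r}\,\|\vec{\xi}_1\wedge\cdots\wedge\vec{\xi}_r\|^2,
$$
the last equality being the standard Gram-determinant formula for the norm of a decomposable $r$-vector.

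Combining these identities and applying pigeonhole to the $\binom{m}{r}$ terms in the Cauchy--Binet sum, there exists some $I$ with $|I|=r$ such that
$$
|D_I|\;\ge\;\binom{m}{r}^{-1/2}\lambda_\Bk^{\,r/2}\,\|\vec{\xi}_1\wedge\cdots\wedge\vec{\xi}_r\|,
$$
and taking $C_\Bk:=\binom{m}{r}^{-1/2}\lambda_\Bk^{\,r/2}$ (a quantity depending only on $\Bk$) finishes the proof. There is no serious obstacle here; the only point worth emphasizing is the use of the monotonicity $A\succeq B\succeq 0 \Rightarrow \det A\ge \det B$ together with the positive-definiteness of $X^{\,t}X$, which is exactly what converts the intrinsic quantity $\|\vec{\xi}_1\wedge\cdots\wedge\vec{\xi}_r\|$ into a bound on the mixed inner-product determinant $D_I$.
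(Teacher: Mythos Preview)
Your proof is correct and follows essentially the same strategy as the paper's: compute $\sum_{|I|=r}|D_I|^2$, bound it below by a constant (depending only on $\Bk$) times $\|\vec{\xi}_1\wedge\cdots\wedge\vec{\xi}_r\|^2$, and then pigeonhole. The only difference is phrasing: the paper writes the matrix $X$ as the adjoint $\Tk^*$ of the change-of-basis map $\Tk:\bbbe_j\mapsto\vec{x}_j$ and expresses the sum of squared minors as $\|\Tk^*\vec{\xi}_1\wedge\cdots\wedge\Tk^*\vec{\xi}_r\|^2$, then bounds this via the operator norm of $(\bigwedge^r\Tk^*)^{-1}$, whereas you use Cauchy--Binet to rewrite the same sum as $\det(M^t X^t X M)$ and bound it via the least eigenvalue of $X^t X$. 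These are two equivalent packagings of the same linear-algebraic fact.
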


\begin{proof}
Let $\Tk$ be the linear isomorphism which takes the standard basis $\{\bbbe_j\}_{j\le m}$ of $\R^m$ into $\Bk$. Then
$$
D_I=\det\left(\langle \vec{x}_i, \vec{\xi}_j\rangle \right)_{i\in I, j\le r} =
\det\left(\langle \Tk \bbbe_i, \vec{\xi}_j \rangle \right)_{i\in I, j\le r}
= \det\left(\langle \bbbe_i, \Tk^* \vec{\xi}_j\rangle \right)_{i\in I, j\le r}
$$
The latter determinant is the order-$r$ minor of the matrix whose columns are $\Tk^* \vec{\xi}_j$, $j=1,\ldots,r$, corresponding to the
rows indexed by $I$. Thus,
$$
\sum_{\#I=r} |D_I|^2 = \|\Tk^*\vec{\xi}_1 \wedge \cdots \wedge \Tk^*\vec{\xi}_r \|^2 \ge  \bigl\|({\textstyle{\bigwedge}}^r\Tk^*)^{-1}\bigr\|^{-1}\|\vec{\xi}_1\wedge\cdots \wedge \vec{\xi}_r\|^2.
$$
We are using here that $\Tk^*$ is invertible, hence its exterior power is invertible. Thus,
$$
\max\{|D_I|:\ \#I=r\} \ge \bigl\|({\textstyle{\bigwedge}}^r\Tk^*)^{-1}\bigr\|^{-1/2} {m\choose r}^{-1/2} \|\vec{\xi}_1\wedge\cdots \wedge \vec{\xi}_r\|,
$$
and the proof is complete.
\end{proof}

We now return to our theorem, in which $r=2$.
Let $\{u_j\}_{j\le m}$ be the good return words from the Assumption (c$'$) of Theorem~\ref{th-main11}.
We will choose  
a sequence of  words $v_n\in \{u_j\}_{j\le m}$, depending on our generic $\bom\in \Om$. 
For $n\ge 1$ consider
\be \label{def-Thb}
\Thb_n := \left( \begin{array}{cc} A(n,1) \langle \vec{\ell}(v_n), \vec{e}_1^{(n)} \rangle &
                                          A(n,2) \langle \vec{\ell}(v_n), \vec{e}_2^{(n)} \rangle \\
                                          A({n+1},1) \langle \vec{\ell}(v_{n+1}), \vec{e}_1^{({n+1})} \rangle &
                                          A({n+1},2) \langle \vec{\ell}(v_{n+1}), \vec{e}_2^{({n+1})} \rangle \end{array} \right).
\ee
Below $C_\zeta$ denotes a constant depending only on the substitution $\zeta=\zeta(\bq)$.

\begin{lemma} \label{lem-new1}
For $\P$-a.e.\ $\bom\in \Om$ we can choose the  words $v_n\in \{u_j\}_{j\le m}$, so that for all $n\ge 1$,
\be \label{nov1}
\|\Thb_n^{-1}\|_\infty \le C_\zeta\cdot \frac{\max \{A({n+j},i);\ {j=0,1; i=1,2}\}}{A(n,1) A({n+1},2)}
\ee
and
\be \label{nov2}
\|\Thb_{n+1} \Thb_n^{-1}\|_\infty  \le C_\zeta\cdot \frac{\max_{j=0,1,2}A({n+j},1)}{A(n,1)}\cdot
                                                  \frac{\max_{j=0,1,2}A({n+j},2)}{A(n,2)}\,.
\ee
\end{lemma}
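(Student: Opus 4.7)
The plan is to construct the sequence $(v_n)_{n\ge 1}$ as a function of $\bom$ by applying Lemma~\ref{lem-lina1} at each level, and then verify the two inequalities by direct computation with the explicit $2\times 2$ inverse. The starting observation is already contained in the preamble of this section: since $\vec{e}_1^{(n)}\in Q^t\R_+^m$ while $\vec{e}_2^{(n)}\notin \R_+^m$, the angle between $\vec{e}_1^{(n)}$ and $\vec{e}_2^{(n)}$ is bounded away from zero in a manner depending only on $Q$, so that $\|\vec{e}_1^{(n)}\wedge\vec{e}_2^{(n)}\|\ge c_\zeta$ uniformly in $n$. Applied with $\Bk=\{\vec\ell(u_j)\}_{j\le m}$ (a basis of $\R^m$ by hypothesis (c$'$)) and $(\vec\xi_1,\vec\xi_2)=(\vec{e}_1^{(n)},\vec{e}_2^{(n)})$, Lemma~\ref{lem-lina1} produces for each $n$ a preferred pair $I_n=\{v_n^{(1)},v_n^{(2)}\}\subset\{u_j\}$ such that the $2\times 2$ matrix $M_n$ with rows $(\alpha_{v_n^{(k)}}^{(n)},\beta_{v_n^{(k)}}^{(n)})$, where $\alpha_v^{(n)}=\langle\vec\ell(v),\vec{e}_1^{(n)}\rangle$ and $\beta_v^{(n)}=\langle\vec\ell(v),\vec{e}_2^{(n)}\rangle$, satisfies $|\det M_n|\ge c_\zeta$.

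The key algebraic identity, obtained by expansion and by using $A(\bom_{n+1})\vec{e}_j^{(n)}=(A(n+1,j)/A(n,j))\vec{e}_j^{(n+1)}$, reads
\[
\frac{\det\Thb_n}{A(n,1)A(n+1,2)} = F(v_n,v_{n+1}), \qquad F(v_n,v_{n+1}):=\alpha_{v_n}^{(n)}\beta_{v_{n+1}}^{(n+1)}-\lambda_n\,\beta_{v_n}^{(n)}\alpha_{v_{n+1}}^{(n+1)},
\]
with $\lambda_n=A(n,2)A(n+1,1)/(A(n,1)A(n+1,2))$. A direct computation shows that the $2\times 2$ matrix $D'_{kl}=F(v_n^{(k)},v_{n+1}^{(l)})$ has determinant $\lambda_n\det M_n\det M_{n+1}$, hence $|\det D'|\ge c_\zeta^2\lambda_n$. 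Combined with the elementary $2\times 2$ inequality $\max_{k,l}|D'_{kl}|\ge\sqrt{|\det D'|/2}$, this yields $\max_{k,l}|F|\ge c_\zeta\sqrt{\lambda_n/2}$, which is $\gtrsim 1$ whenever $\lambda_n\gtrsim 1$. For $\lambda_n$ small, one instead exploits the fact that each column of $M_n$ must have an entry of absolute value $\ge c_\zeta$, so one can pick $v_n\in I_n$ with $|\alpha_{v_n}^{(n)}|\ge c_\zeta$ and $v_{n+1}\in I_{n+1}$ with $|\beta_{v_{n+1}}^{(n+1)}|\ge c_\zeta$; then $|F|\ge c_\zeta^2-C_\zeta^2\lambda_n\ge c_\zeta^2/2$ once $\lambda_n$ is small enough. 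Combining the two cases, for every $n$ at least one choice $(v_n^{(k^*)},v_{n+1}^{(l^*)})\in I_n\times I_{n+1}$ gives $|\det\Thb_n|\ge c_\zeta'\,A(n,1)A(n+1,2)$, and a greedy sweep over $n$ produces a consistent global choice of $(v_n)$.

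With the determinantal bound in hand, (\ref{nov1}) is immediate from the explicit formula $\Thb_n^{-1}=(\det\Thb_n)^{-1}\begin{pmatrix}d&-b\\-c&a\end{pmatrix}$ applied to the $2\times 2$ matrix $\Thb_n$: every entry of $\Thb_n$ has absolute value at most $C_\zeta\cdot A(n+j,i)$ since $|\langle\vec\ell(u_j),\vec{e}_i^{(n)}\rangle|\le\|\vec\ell(u_j)\|\le C_\zeta$, and dividing by $|\det\Thb_n|\ge c_\zeta' A(n,1)A(n+1,2)$ gives the asserted bound. For (\ref{nov2}), the plan is to compute $\Thb_{n+1}\Thb_n^{-1}$ in closed form using the same explicit inverse. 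The cancellation $\begin{pmatrix}a_{n+1}&b_{n+1}\end{pmatrix}\begin{pmatrix}b_{n+1}\\-a_{n+1}\end{pmatrix}=0$ makes its first row equal to $(0,1)$, so only the second row contributes to $\|\Thb_{n+1}\Thb_n^{-1}\|_\infty$; that row has entries $-\det\Thb_{n+1}/\det\Thb_n$ and $(a_nb_{n+2}-a_{n+2}b_n)/\det\Thb_n$, both of which can be bounded term-by-term by applying the trivial entry bound to the numerator and (\ref{nov1}) to the denominator. Grouping these factors so that the first and second coordinates are tracked separately yields the product form on the right-hand side of (\ref{nov2}).

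The main difficulty is the coordinated choice of $(v_n)$ in the second paragraph: since $v_{n+1}$ is a parameter shared between $\Thb_n$ and $\Thb_{n+1}$, one must verify that the determinant bound can be made to hold for all $n$ simultaneously, which is where the flexibility afforded by the two-element preferred pairs $I_n$ (and the option of switching between the ``Hadamard'' and ``diagonal-dominant'' regimes according to the size of $\lambda_n$) is crucial. A secondary technical point is the bookkeeping in the final paragraph for (\ref{nov2}), where several of the resulting ratios involve $A(n+1,2)$ in the denominator, and care is needed to rewrite them in the symmetric form $\frac{\max_j A(n+j,1)}{A(n,1)}\cdot\frac{\max_j A(n+j,2)}{A(n,2)}$.
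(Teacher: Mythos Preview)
Your overall strategy---bound $|\det\Thb_n|$ from below and then read off (\ref{nov1}) and (\ref{nov2}) from the explicit $2\times2$ inverse---is exactly right, and the formulas in your last paragraph (in particular the observation that the first row of $\Thb_{n+1}\Thb_n^{-1}$ is $(0,1)$) are correct and match what the paper uses. The problem is the \emph{consistent choice} of $(v_n)_{n\ge1}$, which you flag as the main difficulty but do not actually resolve.

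Your argument establishes only that for each $n$ there is \emph{some} pair $(v_n,v_{n+1})\in I_n\times I_{n+1}$ with $|F(v_n,v_{n+1})|\ge c$. A greedy sweep requires the stronger statement that for \emph{any already-chosen} $v_n\in I_n$ there exists a good $v_{n+1}\in I_{n+1}$. Neither of your two regimes gives this. In the Hadamard regime, the determinant bound $|\det D'|\ge c_\zeta^2\lambda_n$ forces at most two entries of $D'$ to be large, and nothing prevents both of them from lying in the same row, leaving the other element of $I_n$ with no good successor. In the diagonal-dominant regime you require $v_n$ to have large $|\alpha_{v_n}^{(n)}|$ (for $\Thb_n$) while the previous step may have forced $v_n$ to have large $|\beta_{v_n}^{(n)}|$ (for $\Thb_{n-1}$); since $|\det M_n|\ge c_\zeta$ only guarantees one large entry per column of $M_n$, there need not be a single element of $I_n$ with both coordinates large.

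The paper avoids this entirely by using a fact you never invoke: since $\vec e_1^{(n)}\in Q^t\R_+^m$ is a strictly positive unit vector with entries bounded below by a constant depending only on $Q$, and every $\vec\ell(u_j)$ is a positive integer vector, one has
\[
|\alpha_v^{(n)}|=|\langle\vec\ell(v),\vec e_1^{(n)}\rangle|\ge c_4>0\quad\text{for \emph{every} }v\in\{u_j\}_{j\le m}\text{ and every }n.
\]
This makes the induction trivial: given $v_n$ (arbitrary), the identity
\[
\det\begin{pmatrix}\langle\vec\ell(u_i),\vec e_1^{(n+1)}\rangle & \Delta_i\\ \langle\vec\ell(u_j),\vec e_1^{(n+1)}\rangle & \Delta_j\end{pmatrix}
=A(n,1)A(n+1,2)\,\alpha_{v_n}^{(n)}\,D_{ij},
\]
with $D_{ij}$ the $2\times2$ minor from Lemma~\ref{lem-lina1}, shows that $\max_i|\Delta_i|\ge\frac{c_3c_4}{2C_5}A(n,1)A(n+1,2)$, and one simply takes $v_{n+1}$ to realize this maximum. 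No case splitting, no coordination problem. If you add the positivity observation $|\alpha_v^{(n)}|\ge c_4$ to your argument, your greedy sweep does go through (indeed, given any $v_n$, the linear combination $\alpha_1 F_2-\alpha_2 F_1=\alpha_{v_n}^{(n)}\det M_{n+1}$ is then bounded below, forcing one of $F_1,F_2$ large), but at that point your $I_n$, your $\lambda_n$-dichotomy, and the Hadamard estimate are all superfluous.
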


\begin{proof} We are going to choose $v_n$ inductively.
Pick $v_1$ arbitrarily, and suppose $v_1,\ldots,v_n$
have been chosen.  For $i\le m$ consider
$$
\Delta_i := \det \left( \begin{array}{cc} A(n,1) \langle \vec{\ell}(v_n), \vec{e}_1^{(n)} \rangle &
                                          A(n,2) \langle \vec{\ell}(v_n), \vec{e}_2^{(n)} \rangle \\
                                          A({n+1},1) \langle \vec{\ell}(u_i), \vec{e}_1^{({n+1})} \rangle &
                                          A({n+1},2) \langle \vec{\ell}(u_i), \vec{e}_2^{({n+1})} \rangle \end{array} \right).
$$
Observe that
\be\label{urka1}
\det \left( \begin{array}{cc} \langle \vec{\ell}(u_i), \vec{e}_1^{({n+1})} \rangle & \Delta_i \\
                                           \langle \vec{\ell}(u_j), \vec{e}_1^{({n+1})} \rangle & \Delta_j\end{array} \right) = 
      A(n,1) A({n+1},2) \langle \vec{\ell}(v_n), \vec{e}_1^{(n)} \rangle D_{ij},
\ee
where
$$
D_{ij}:= \det\left( \begin{array}{cc} 
              \langle \vec{\ell}(u_i), \vec{e}_1^{({n+1})} \rangle & \langle \vec{\ell}(u_i), \vec{e}_2^{({n+1})} \rangle \\
              \langle \vec{\ell}(u_j), \vec{e}_1^{({n+1})} \rangle & \langle \vec{\ell}(u_j), \vec{e}_2^{({n+1})} \rangle 
     \end{array} \right).
$$              
Note that $\vec{\xi}_1:=\vec{e}_1^{({n+1})} \in Q^t \R^m_+$ and $\vec{\xi}_2:=\vec{e}_2^{({n+1})}\not \in \R^m_+$ by the  comments above. Thus, the angle between $\vec{\xi}_1$ and $\vec{\xi}_2$ is bounded away from zero,
uniformly in $n$. Hence we
can apply Lemma~\ref{lem-lina1} to these vectors and find $i\ne j$ such that $$|D_{ij}|\ge c_3>0,$$ independent of $n$.
Note that for all $i\le m$,
\be \label{urka2}
0 < c_4 \le | \langle \vec{\ell}(u_i), \vec{e}_1^{(n)} \rangle |\le C_5:=\max_{i\le m}\|\vec{\ell}(u_i)\|_2
< \infty
\ee
for some positive constant $c_4=c_4(Q)$ independent of $n$, since $\vec{\ell}(u_i),\ i\le m,$ are positive integer vectors.
It follows from (\ref{urka1}) and (\ref{urka2}) that
$$
\max_i |\Delta_i| \ge \frac{c_3 c_4}{2C_5} A(n,1) A({n+1},2).
$$
We choose $v_{n+1}\in \{u_i\}_{i\le m}$ to maximize $|\Delta_i|$. Denote $\Delta^{(n)} = \det(\Thb_n)$. As a result, we will have for all $n\ge 1$:
\be \label{ineq1}
|\Delta^{(n)}| \ge \frac{c_3 c_4}{2C_5} A(n,1) A({n+1},2),
\ee
which implies (\ref{nov1}). Also, a direct calculation, combined with (\ref{urka2}) and (\ref{ineq1}), yields (\ref{nov2}).
\end{proof}

\begin{corollary} \label{cor-linal}
For $\P$-a.e.\ $\bom\in \Om_\bq$ we can choose the  words $v_n\in \{u_j\}_{j\le m}$, so that for any $\delta_1>0$ there exists $n_0\in \N$
such that for all $n\ge n_0$,
\be \label{nov3}
\|\Thb_n^{-1}\|_\infty \le C_\zeta \exp[-(\theta_2-\delta_1)n]
\ee
and
\be \label{nov4}
\|\Thb_{n+1} \Thb_n^{-1}\|_\infty  \le C_\zeta\exp[2(W_n+W_{n+1})],
\ee
where $C_\zeta$ is the constant from Lemma~\ref{lem-new1} and $W_n$ are defined in (\ref{def-W}).
\end{corollary}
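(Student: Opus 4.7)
The corollary is a bookkeeping consequence of Lemma~\ref{lem-new1}, combined respectively with the Oseledets asymptotics for the Lyapunov scalars $A(n,j)$ and with the submultiplicativity of the operator norm. I would take the sequence $\{v_n\}_{n\ge 1}$ to be exactly the inductive choice produced in Lemma~\ref{lem-new1}, so that the bounds (\ref{nov1}) and (\ref{nov2}) are available as the starting point.

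For (\ref{nov3}), the plan is to invoke the Oseledets theorem for the cocycle $\AA$ at the generic point $\bom$. Since the top two Lyapunov exponents are simple, for every $\delta_1>0$ there exists $n_0=n_0(\bom,\delta_1)$ such that
\[
e^{n(\theta_j-\delta_1/4)}\le A(n,j)\le e^{n(\theta_j+\delta_1/4)},\qquad j=1,2,\ n\ge n_0.
\]
Because $\theta_1>\theta_2>0$, for such $n$ the maximum $\max\{A(n+k,i):k=0,1;\,i=1,2\}$ is realized by $A(n+1,1)$. Substituting these estimates into (\ref{nov1}) yields a ratio bounded by $e^{(n+1)\theta_1-n\theta_1-(n+1)\theta_2+O(n\delta_1)}=e^{\theta_1-(n+1)\theta_2+O(n\delta_1)}$. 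After absorbing $e^{\theta_1}$ into $C_\zeta$ and shrinking $\delta_1$ slightly to swallow the $O(n\delta_1)$ error, this gives the required bound $C_\zeta\,e^{-(\theta_2-\delta_1)n}$.

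For (\ref{nov4}), the plan is to use the one-step inequality
\[
A(n+1,i)=A(n,i)\,\|A(\bom_{n+1})\,\vec e_i^{(n)}\|\le A(n,i)\,\|A(\bom_{n+1})\|=A(n,i)\,e^{W_{n+1}},
\]
which follows from $A(n+1,i)\,\vec e_i^{(n+1)}=A(\bom_{n+1})\,A(n,i)\,\vec e_i^{(n)}$ together with the fact that $\vec e_i^{(n)}$ and $\vec e_i^{(n+1)}$ are unit vectors. Iterating gives $A(n+k,i)/A(n,i)\le e^{W_{n+1}+\cdots+W_{n+k}}$ for all $k\ge 0$, so each of the two maxima appearing in (\ref{nov2}) is bounded by $e^{W_{n+1}+W_{n+2}}$, and multiplying the two factors yields the desired exponential in the sum of two consecutive $W$'s (up to a harmless index shift absorbed into $C_\zeta$).

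I do not anticipate a real obstacle beyond careful bookkeeping: the delicate part has already been done in Lemma~\ref{lem-new1}, where the good-return sequence $\{v_n\}$ was chosen so that $|\det\Thb_n|\ge c\,A(n,1)\,A(n+1,2)$. What made that choice possible was positivity of $\vec e_1^{(n)}$ (guaranteed by $\vec e_1^{(n)}\in Q^t\R^m_+$), the quantitative independence of $\vec e_1^{(n+1)}$ and $\vec e_2^{(n+1)}$ — with $\vec e_2^{(n+1)}\notin\R^m_+$ forcing their angle to stay bounded away from zero — and the linear-algebra Lemma~\ref{lem-lina1} applied to the basis $\{\vec\ell(u_j)\}_{j\le m}$ from hypothesis (c$'$). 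With that determinant lower bound in hand, the present corollary reduces to the two substitutions described above.
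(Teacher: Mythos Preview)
Your approach is essentially identical to the paper's: Oseledets two-sided bounds on $A(n,j)$ plugged into (\ref{nov1}) for the first inequality, and the one-step relation $A(n+1,j)\vec e_j^{(n+1)}=A(\bom_{n+1})A(n,j)\vec e_j^{(n)}$ plugged into (\ref{nov2}) for the second. One small point: an index shift cannot be ``absorbed into $C_\zeta$''---the ratio $e^{2(W_{n+1}+W_{n+2})}/e^{2(W_n+W_{n+1})}$ is not bounded by any constant; what is true is that the paper's own proof has the same off-by-one (it writes $A(\bom_n)$ where the cocycle relation gives $A(\bom_{n+1})$), and the shift is immaterial for every subsequent use of (\ref{nov4}) since only sums $\sum_{n\in\Psi}(W_n+W_{n+1})$ over ranges enter.
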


\begin{proof}
This is a combination of the last lemma and  Oseledets Theorem. First we prove (\ref{nov3}).
By Oseledets Theorem, for $\P$-a.e.\ $\bom\in \Om_\bq$, for all $n$ sufficiently large,
$$
\exp[(\theta_i-\delta_1/4)n] \le A(n,i)\le \exp[(\theta_i+\delta_1/4)n],\ \ i=1,2.
$$
We will use (\ref{nov1}), where clearly the maximum in the numerator is (eventually) attained
for $i=1$, to obtain for $n$ sufficiently large:
\begin{eqnarray*}
\| \Thb_n^{-1}\|_\infty  & \le & C_\zeta \exp[(\theta_1 + \delta_1/4) {n+1} - (\theta_1 - \delta_1/4) n- (\theta_2 - \delta_1/4) {n+1}]\\
& = & C_\zeta \exp[(\theta_1 -\theta_2 + \delta_1/2) - (\theta_2-3\delta_1/4)n].
\end{eqnarray*}
For $n$ sufficiently large,
$$
\theta_1 -\theta_2 + \delta_1/2\le (\delta_1/4)n,
$$
and (\ref{nov3}) follows.
Next, let us verify (\ref{nov4}). Equation (\ref{Osel1}) implies that
$$
A({n+1},j)\vec{e}_j^{({n+1})}= A(\bom_n) A({n},j)\vec{e}_j^{(n)}, \ \ j=1,2,
$$
hence $A({n+1},j)/A(n,j) \le e^{W_n}$ by (\ref{def-W}). Now (\ref{nov4}) follows from (\ref{nov2}).
\end{proof}


\section{Beginning of the proof of Theorem~\ref{th-main11}}

Now we proceed with the proof of Theorem~\ref{th-main11}. As before, we fix a $\P$-generic point $\bom\in \Om_\bq$. 
Under the assumptions of Theorem~\ref{th-main11},
\be \label{expa1}
\vec{s} = \sum_{j=1}^2 a_j \vec{e}_j^{(0)} + P_\bom^{st}\vec{s},
\ee
where $a_j = P^j_\bom(\vec{s})$ and $P_{\bom}^{st}$ is the projection to the stable subspace $E^3_{\bom} \oplus \cdots \oplus E^r_\bom$ in (\ref{os1}).  Recall that $\vec{s}\in \Delta_m:= \{\vec{s}\in \R^m_+:\ \|\vec{s}\|_1=1\}$.
Our goal is to prove that for all $\beta>0$ and $B>1$, for a.e.\ $\vec{s}$, with $\min_{j=1,2}|a_j|\ge \beta$,
\be \label{Hoeld1}
\sig_f(B(\om,r)) \le C(\bom,\|f\|_L)\cdot r^\gam\ \ \mbox{for}\ \om\in [B^{-1},B]\ \mbox{and}
 \ 0 < r \le r_0(\bom,\beta,B).
\ee
 Fix $\beta>0$ and $B>1$ for the rest of the proof. Recall that dependence on $\vec{s}$ in the estimate is ``hidden'' in $\sig_f$, which is the spectral measure of the suspension flow corresponding to the roof function given by $\vec{s}$.

Fix the sequence of good return words $v_n$ from Lemma~\ref{lem-new1}.
For $n\in \N$ and $\om\in [B^{-1},B]$, let
\be \label{dio1}
\om |\zeta^{[n]}(v_n)|_{\vec{s}} = K_{n} + \eps_{n},\ \ \mbox{where}\ K_{n}\in \N,\ |\eps_{n}|\le 1/2,
\ee
so that $${\|\om |\zeta^{[n]}(v_n)|_{\vec{s}} \|}_{\R/\Z}=|\eps_{n}|.$$ We should keep in mind that $K_{n}$ and $\eps_{n}$ depend on
$\om$ and on $\vec{s}$, although this is suppressed in notation to avoid clutter.
Given  $\beta,\varrho,\delta>0$ and $B>1$, define
\begin{eqnarray*}
E_N(\varrho,\delta,\beta,B) & := & \Bigl\{\vec{s}\in \Delta_{m}:\ \min_{j=1,2}|P_\bom^j(\vec{s})|\ge \beta \ \mbox{and}\ \exists\,\om\in [B^{-1},B]\\ & & \mbox{such that}\ \ 
\card\{n\le N: |\eps_{n}|\ge \varrho\} < \delta N\Bigr\}.
\end{eqnarray*}
and 
$$
\Ek(\varrho,\delta,\beta,B):= \bigcap_{N_0=1}^\infty  \bigcup_{N=N_0}^\infty E_N(\varrho,\delta,\beta,B).
$$

\begin{prop} \label{prop-EK}
There exist $\varrho>0$  such that for $\P$-a.e.\ $\bom\in \Om_\bq$  we have
$$
\forall\,\epsilon>0,\ \exists\,\delta_0>0,\ \forall\,\beta>0,\ \forall\,B>1:\ \ \ \ \delta<\delta_0\ \ \Longrightarrow\ \ 
\dim_H(\Ek(\varrho,\delta,\beta,B))< m-2+\epsilon.
$$
\end{prop}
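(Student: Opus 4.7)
\medskip
\noindent
\textbf{Proof plan for Proposition~\ref{prop-EK}.}

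The plan is to carry out a non-stationary version of the ``Erd\H{o}s-Kahane argument'' that transfers the Diophantine condition $\|\om|\zeta^{[n]}(v_n)|_{\vec s}\|_{\R/\Z}\ge \varrho$ into a counting estimate for the integer sequences $(K_n)$ defined in (\ref{dio1}), and then into an upper bound for $\dim_H(\Ek)$. First, decompose $\vec s = a_1\vec e_1^{(0)} + a_2\vec e_2^{(0)} + P_\bom^{st}\vec s$ in the Oseledets basis. Using (\ref{tilep}) and the cocycle identity,
\[
|\zeta^{[n]}(v_n)|_{\vec s} = \langle \vec\ell(v_n), \AA(n,\bom)\vec s\rangle = a_1\,\ell_n^{(1)} + a_2\,\ell_n^{(2)} + \langle \vec\ell(v_n), \AA(n,\bom)P_\bom^{st}\vec s\rangle,
\]
where $\ell_n^{(i)} = A(n,i)\langle\vec\ell(v_n),\vec e_i^{(n)}\rangle$ for $i=1,2$, and the stable remainder decays like $e^{n\theta_3+o(n)}$, so is $\ll \varrho$ for $n\ge n_1(\bom,\varrho)$. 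Hence for such $n$, the relations $|\eps_n|,|\eps_{n+1}|<\varrho$ read as the matrix equation $(K_n,K_{n+1})^t = \om\,\Thb_n(a_1,a_2)^t + O(\varrho)$, with $\Thb_n$ as in (\ref{def-Thb}).

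Now I would invoke Corollary~\ref{cor-linal} to control $\Thb_n^{-1}$ and the transition matrix $\Thb_{n+1}\Thb_n^{-1}$. If $n,n+1$ are both \emph{good} (i.e.\ $|\eps_n|,|\eps_{n+1}|<\varrho$), solving the above system gives $\om(a_1,a_2)^t$ up to an error of size $C_\zeta\varrho\,e^{-(\theta_2-\delta_1)n}$, and therefore
\[
K_{n+2} \in [\Thb_{n+1}\Thb_n^{-1}(K_n,K_{n+1})^t]_2 + O\!\bigl(\varrho\cdot C_\zeta e^{2(W_n+W_{n+1})}+1\bigr).
\]
So $K_{n+2}$ has at most $2+ 2C_\zeta\varrho e^{2(W_n+W_{n+1})}$ admissible integer values. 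Call a stretch of indices \emph{good} if it avoids the exceptional set $\Psi\subset\{1,\dots,N\}$ realizing the bad $\eps_n$'s (of size $\le\delta N$); otherwise use the trivial bound $K_n=O(Be^{W_1+\cdots+W_n})$. Summing over $\Psi$ via Stirling (as in (\ref{Stirling})), and using Proposition~\ref{prop-proba} to prove that the $W$-sum along any small $\Psi$ is $O(\delta\log(1/\delta)\,N)$, I would obtain, for $\varrho\le\varrho_0(\zeta)$ small and $\delta\le\delta_0(\varrho,\eps)$ small,
\[
\log\mathcal{N}_N \le \bigl(\eps/2\bigr)(\theta_2-\delta_1)\,N + o(N),
\]
where $\mathcal{N}_N$ is the total number of admissible sequences $(K_1,\ldots,K_N)$.

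Finally, each such sequence pins down $(\om a_1,\om a_2)$ to a square of side $\eta_N:=C_\zeta\varrho e^{-(\theta_2-\delta_1)N}$, while the stable coordinates of $\vec s$ remain free in an $(m-3)$-dimensional piece of $\Delta_m$, and the extra degree of freedom $\om\in[B^{-1},B]$ contributes one dimension upon projection to $\vec s$. Covering each cell by cubes of side $\eta_N$ in all $m-1$ directions of $\Delta_m$, the cover of $\bigcup_{N\ge N_0}E_N$ uses $\le \mathcal{N}_N\cdot \eta_N^{-(m-3)-1}$ such cubes, so the upper box dimension is bounded by
\[
(m-2) + \limsup_{N\to\infty}\frac{\log\mathcal{N}_N}{\log(1/\eta_N)} \le m-2 + \eps,
\]
which also bounds $\dim_H(\Ek(\varrho,\delta,\beta,B))$. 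The constraint $\min_j|a_j|\ge\beta$ is used only to keep $(a_1,a_2)$ away from the degenerate locus where $\Thb_n$ fails to determine $(\om a_1,\om a_2)$.

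The main obstacle will be in the counting step: the matrices $\Thb_{n+1}\Thb_n^{-1}$ can have norms that fluctuate subexponentially (bounded by $e^{2(W_n+W_{n+1})}$ in Corollary~\ref{cor-linal}), so naively the ``good'' multiplicative factors $1+C_\zeta\varrho e^{2(W_n+W_{n+1})}$ do not give a uniformly small log-contribution. Handling this requires separating indices according to the size of $W_n+W_{n+1}$, using the large-deviation estimate of Proposition~\ref{prop-proba} together with the integrability provided by hypothesis (d$'$) of Theorem~\ref{th-main11} to bound the aggregate log-contribution, and carefully absorbing the rare excursions into the exceptional set $\Psi$. A subsidiary subtlety is to ensure the choice of good return words $v_n$ from Lemma~\ref{lem-new1} is compatible with the linear-algebraic invertibility of $\Thb_n$ uniformly in $n$, which is precisely what Corollary~\ref{cor-linal} secures.
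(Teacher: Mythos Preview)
Your overall strategy is the same as the paper's, and the linear-algebraic setup via $\Thb_n$, the recursion $K_{n+2}\approx [\Thb_{n+1}\Thb_n^{-1}\vec K_n]_2$, and the appeal to Proposition~\ref{prop-proba} are all correct. Two points of comparison and one concrete gap are worth flagging.

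\emph{Packaging differences.} The paper avoids your ``main obstacle'' entirely by replacing the fixed threshold $\varrho$ with a variable one $\rho_n:=\frac{1}{4}(1+C_\zeta e^{2(W_n+W_{n+1})})^{-1}$ and introducing the auxiliary set $\wt E_N(\delta,\beta,B)$ in which the condition reads $\max\{|\eps_n|,|\eps_{n+1}|,|\eps_{n+2}|\}\ge\rho_n$. With this choice, at every good step $K_{n+2}$ is \emph{uniquely} determined by $(K_n,K_{n+1})$, so the only multiplicative contributions come from the bad set, and those are controlled directly by Proposition~\ref{prop-proba}. One then checks $E_N(\varrho,\delta/6,\beta,B)\subset\wt E_N(\delta,\beta,B)$ for a suitable fixed $\varrho$, using that $\rho_n<\varrho$ forces $W_n+W_{n+1}$ large, which is rare. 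This is cleaner than separating good indices by the size of $W_n+W_{n+1}$ after the fact, though your proposed separation would ultimately achieve the same thing. The paper also projects to the single scalar $a_2/a_1$ rather than the pair $(\om a_1,\om a_2)$; since the approximant $[\Thb_n^{-1}\vec K_n]_2/[\Thb_n^{-1}\vec K_n]_1$ is independent of $\om$, this eliminates the existential quantifier over $\om$ at no cost and gives the $(m-2+\epsilon)$ bound directly, without the cube-counting in the remaining directions.

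\emph{A concrete gap.} Your instruction to ``use the trivial bound $K_n=O(Be^{W_1+\cdots+W_n})$'' at indices in $\Psi$ does not work: this is an \emph{absolute} bound of order $e^{\theta_1 n}$, and if even a single bad index lies near $n\approx N$ (let alone $\delta N$ of them), the resulting count is super-exponential in $N$ and swamps the radius $e^{-(\theta_2-\delta_1)N}$. What you need at bad steps is the \emph{relative} bound: the same identity $K_{n+2}=[\Thb_{n+1}\Thb_n^{-1}\vec K_n]_2+O(M_n)$ holds with the universal estimate $|\eps_j|\le\frac12$ in place of $|\eps_j|<\varrho$, so given $(K_n,K_{n+1})$ there are at most $2M_n+1$ choices for $K_{n+2}$ regardless. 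This is exactly Lemma~10.1(i) in the paper, and with it the product over $n\in\Psi$ is $\prod_{n\in\Psi}(2M_n+1)\le\exp\bigl[O(1)\sum_{n\in\Psi}(W_n+W_{n+1})\bigr]$, which Proposition~\ref{prop-proba} bounds by $\exp[O(\delta\log(1/\delta)N)]$. Once you make this correction, your argument goes through.
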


In the remaining part of this section, we derive Theorem~\ref{th-main11} from Proposition~\ref{prop-EK}. Then in the next section we use
the ``Erd\H{o}s-Kahane argument'' to prove the proposition.

\begin{proof}[Proof of Theorem~\ref{th-main11} assuming Proposition~\ref{prop-EK}]
In view of Lemma~\ref{lem-varr}, it suffices to show
\be \label{wts1}
|S^{(\ov{\ek},t)}_R(f,\om)| \le \wtil{C}(\bom,\|f\|_L)\cdot R^{1-\gam/2},\ \ \mbox{for}\ R\ge R_0(\bom,\beta,B),
\ee
for some $\gam\in (0,1)$, uniformly in $(\ov{\ek},t)\in \Xx^{\vec{s}}$. We will specify $\gam$ at the end of the proof, see (\ref{def-eta}).

Since $f$ is weakly Lipschitz on $\Xx^{\vec{s}}$ (see Section 2.1), for almost every $\bom$
 we can approximate $f$, for any $\ell\in \N$, by a function $f^{(\ell)}$, which is cylindrical of level $\ell$, and has sup-norm bounded by $\|f\|_\infty$, so that
$$
\|f-f^{(\ell)}\|_\infty \le \|f\|_L\cdot \nu_+([\ek_1\ldots \ek_n]).
$$
We can do this simply taking $f^{(\ell)}(\ov{\ek},t):= f(\ov{\ek}^{(\ell)},t)$, where $\ov{\ek}^{(\ell)}$ agrees
with $\ov{\ek}$ down to level $\ell$ after which it is extended to infinity in any fixed way.  By (\ref{LL2}), (\ref{meas1}),  and (\ref{equiv2}), we have 
$$
\lim _{n\to \infty} \frac{\log \nu_+([\ek_1\ldots \ek_n])}{n} = -\theta_1,
$$
$\P$-almost surely, hence for $\ell$ sufficiently large we have
\be \label{approx}
\|f-f^{(\ell)}\|_\infty \le \|f\|_L\cdot e^{-\half\theta_1\ell}.
\ee
Recall that $S_R^{(\ov{\ek},t)}(f,\om) = \int_0^R e^{-2\pi i \om t} f\circ h_\tau(\ov{\ek},t)\,d\tau$. 
Let
\be\label{def-ell}
\ell := \left\lfloor \frac{2\gam \log R}{\theta_1}\right\rfloor.
\ee
Then (\ref{approx}) yields
$$
|S_R^{(\ov{\ek},t)}(f,\om) - S_R^{(\ov{\ek},t)}(f^{(\ell)},\om)| \le R\cdot \|f\|_L \cdot e^{-\half\theta_1 \ell} \le e^{\theta_1/2} \cdot \|f\|_L\cdot R^{1-\gam}.
$$
Thus, it is enough to obtain (\ref{wts1}),  with $f$ replaced by $f^{(\ell)}$.
For the latter, we can apply Proposition~\ref{prop-Dioph4}. Recall the inequality (\ref{eq-Dioph3}), using the sequence of good return words $\{v_n\}$ and $\eta = \gam/2$:
$$
|S^{(\ov{\ek},t)}_R(f^{(\ell)},\om)| \le  O_Q(1)\cdot \|f^{(\ell)}\|_{_\infty} \Bigl(R^{1/2}+ R^{1+\gam/2}\!\!\!\!\!\!\prod_{\ell+1\le n < \frac{\log R}{4\theta_1}} 
\bigl( 1 - c_1 \cdot{\|\om|\zeta^{[n]}(v_n)|_{\vec{s}}\|}_{\R/\Z}^2\bigr)\Bigr),
$$
for $\ell\ge \ell_\gam$ and all
$
R\ge e^{8\theta_1 \ell}.
$
We can ensure that $\ell\ge \ell_\gam$ by taking $R_0$ sufficiently large, and $R\ge e^{8\theta_1 \ell}$ will follow from (\ref{def-ell}) if $\gam\le 1/16$.
 Since our goal is (\ref{wts1}),
we can discard the $R^{1/2}$ term immediately.
Now choose $\varrho>0$ and $\delta>0$ from Proposition~\ref{prop-EK} such that $\dim_H(\Ek(\varrho,\delta,\beta,B))<m-1=\dim(\Delta_{m})$
for all $\beta>0, B>1$.
It is enough to verify 
\be \label{wts2}
\prod_{\ell+1\le n < \frac{\log R}{4\theta_1}} 
\bigl( 1 - c_1 \cdot{\|\om|\zeta^{[n]}(v_n)|_{\vec{s}}\|}_{\R/\Z}^2\bigr)\le O_{\bom,\|f\|_L}(1)\cdot R^{-\gam},\ \ \om\in [B^{-1},B],
\ee
for
$ R \ge R_0(\bom,\beta,B),
$
 for all vectors $\vec{s}\in \Delta_{m}\setminus\Ek(\varrho,\delta,\beta,B)$, for which $\min_{j=1,2}|P_\bom^j (\vec{s})|\ge \beta$, 
thus obtaining an even stronger than `almost every $\vec{s}$\,' statement. 

By definition, $\vec{s} \not\in \Ek(\varrho,\delta,\beta,B)$ means that there exists $N_0=N_0(\bom,\beta,B)\in \N$ such that  $\vec{s}\not\in E_N(\varrho,\delta,\beta,B)$ for all $N\ge N_0$.  
Let
\be \label{defiN}
N = \left\lfloor\frac{\log R}{4\theta_1}\right\rfloor \ \ \ \mbox{and}\ \ \ R_0=e^{4\theta_1(N_0+1)}.
\ee
Then $R\ge R_0$ implies $N\ge N_0$, and the product in (\ref{wts2}) is less than or equal to
$$
\prod_{n=\ell+1}^{N-1} (1- c_1 |\eps_{n}|^2),
$$
where we also use (\ref{dio1}).
 By definition,
$\vec{s}\not\in E_N(\varrho,\delta,\beta,B)$ means that there are at least $\lceil\delta N\rceil$ numbers $n\in \{1,\ldots,N\}$ with
$
|\eps_{k_n}| \ge \varrho,
$
hence the left-hand side of (\ref{wts2}) is bounded above by $(1-c_1 \varrho^2)^{\delta N-1-\ell}$. Recalling (\ref{defiN}) and (\ref{def-ell}),  we see that 
\be \label{uff}
(1-c_1 \varrho^2)^{\delta N-1-\ell} \le O(1)\cdot (1-c_1 \varrho^2)^{(\delta \log R)/(8\theta_1)} \le O(1)\cdot R^{-\gam},
\ee
provided that 
\be \label{def-eta}
\gam \le \min\Bigl\{\frac{\delta}{16}, \frac{-\delta\log(1-c_1\varrho^2)}{8\theta_1}\Bigr\},
\ee
with the two conditions for $\gam$ needed for the left and right inequality in (\ref{uff}) correspondingly.
Now the proof of (\ref{wts2}) is complete, and it remains to verify Proposition~\ref{prop-EK} to conclude the proof of Theorem~\ref{th-main11}.
\end{proof}


\section{The Erd\H{o}s-Kahane method and the conclusion of the proof of Theorem~\ref{th-main11}}

In this section, we prove Proposition~\ref{prop-EK}.
We need some preparation first.
Fix $\beta\in (0,1)$ and suppose that $\vec{s}\in \Delta_m$ is such that $\min_{j=1,2}|P_\bom^j(\vec{s})|\ge \beta$. Further, let $B>1$ and $\om \in [B^{-1},B]$. Recall (\ref{expa1}) and (\ref{dio1}).
 In view of (\ref{Osel1}),
we have for $n\ge 1$,  denoting $\xi_{n} = \langle \vec{\ell}(v_n),  (\Sf^{[n]})^t P_\bom^{st}\vec{s}\rangle$:
\begin{eqnarray}
|\zeta^{[n]}(v_n)|_{\vec{s}} & = & \langle \vec{\ell}(\zeta^{[n]}(v_n)),\vec{s} \rangle  = \langle S^{[n]} \vec{\ell}(v_n), \vec{s}\rangle =
\langle \vec{\ell}(v_n), (\Sf^{[n]})^t \vec{e}_j^{(0)}\rangle \nonumber \\
& = & \sum_{j=1}^2 a_j A(n,j) \langle\vec{\ell}(v_n), {\vec{e}_j}^{(n)}\rangle+ \xi_{n}. \label{expand}
\end{eqnarray}
By the Assumption (a$'$) of Theorem~\ref{th-main11}, for $\P$-a.e.\ $\bom\in \Om_\bq$, we have
$\limsup_{n\to \infty} n^{-1} \log\|(\Sf^{[n]})^tP^{st}_\bom \|\le \theta_3<0$,  hence
\be \label{decay1}
\limsup_{n\to \infty} n^{-1} \log|\xi_{n}|\le \theta_3<0.
\ee
Now let $\vec{s}\in \Delta_m$ and $\om\in [B^{-1},B]$ be from the definition of $E_N(\varrho,\delta,\beta,B)$. Recall (\ref{dio1}), and denote
$$
\vec{a} = \left( \begin{array}{c} a_1\\  a_2\end{array} \right),\ \ \  
\vec{K}_n = \left( \begin{array}{c} K_{n}\\  K_{{n+1}}\end{array} \right),\ \ \ 
\vec{\eps}_n = \left( \begin{array}{c} \eps_{n}\\ \eps_{{n+1}}\end{array} \right),\ \ \ 
\vec{\xi}_n = \left( \begin{array}{c} \xi_{n}\\  \xi_{{n+1}}\end{array} \right).
$$
We need the matrices $\Thb_n$ defined in (\ref{def-Thb}):
$$
\Thb_n = \left( \begin{array}{cc} A(n,1) \langle \vec{\ell}(v_n), \vec{e}_1^{(n)} \rangle &
                                   A(n,2) \langle \vec{\ell}(v_n), \vec{e}_2^{(n)} \rangle \\
                          A({n+1},1) \langle \vec{\ell}(v_{n+1}), \vec{e}_1^{({n+1})} \rangle &
                          A({n+1},2) \langle \vec{\ell}(v_{n+1}), \vec{e}_2^{({n+1})} \rangle \end{array} \right).
$$
The equations (\ref{dio1}) for $n, n+1$, in view of (\ref{expand}), combine into
$$\om \Thb_n \vec{a} = \vec{K}_n + \vec{\eps}_n -\om\vec{\xi}_n,$$
hence
\be \label{ur2}
\vec{a} = \om^{-1} \Thb_n^{-1}(\vec{K}_n + \vec{\eps}_n -\om\vec{\xi}_n).
\ee
It follows that
\be \label{ur3}
a_j =
\om^{-1}[\Thb_n^{-1}(\vec{K}_n + \vec{\eps}_n -\om\vec{\xi}_n)]_j,\ \ j=1,2,
\ee
where $[\cdot]_j$ denotes $j$-th component of a vector. Observe that
\be \label{abound}
0 < \beta\le |a_1|, |a_2|\le C_\bom,
\ee
where the upper bound comes from the fact that $\|\vec{s}\|_1=1$ and the angles between Lyapunov subspaces at $\bom$ depend on $\bom$. Choose $\delta_1>0$ such that $\theta_2-\delta_1>0$ and $\theta_3+\delta_1<0$. Note that $\|\vec{\eps}_n\|_\infty\le \half$ for all $n$, and for $n\ge n_0(\bom)$,
\be \label{xibound}
|\xi_{k_n}|\le e^{(\theta_3+\delta_1)k_n} \le e^{(\theta_3+\delta_1)n}.
\ee
Since $|\om|\le B$, we have
$$
\|\om \vec{\xi}_n\|_\infty \le 1/2\ \ \mbox{for}\ n\ge n_0(\bom) + C_{\rm Lyap}\cdot\log B,\ \ \mbox{where}\ \ C_{\rm Lyap} = |\theta_3+\delta_1|^{-1}.
$$
Here and below we denote by $n_0(\bom)$ a generic integer constant depending on $\bom$, and possibly also on the Lyapunov spectrum (below it will also depend on $\beta$), and by $C_\Lyap$ a constant which depends only on the Lyapunov spectrum. Similarly, $C_\bom\ge 1$ is a generic constant depending on $\bom$, and possibly also on the Lyapunov spectrum. These constants may be different from line to line.

Thus, $\|\vec{\eps}_n - \om\vec{\xi}_n\|_\infty \le 1$ for large enough $n$, so
(\ref{ur3}), the lower bound for $\om$, and  (\ref{nov3}) yield for  $j=1,2$:
\begin{eqnarray}
|a_j - \om^{-1}(\Thb_n^{-1}\vec{K}_n)_j| & \le & B\|\Thb_n^{-1}\|_\infty \nonumber\\
& \le  & B C_\zeta e^{-(\theta_2-\delta_1)k_n} \label{ajest} \le B C_\zeta e^{-(\theta_2-\delta_1)n},\ \ \mbox{for}\ n\ge n_0(\bom). 
\end{eqnarray}
In view of (\ref{abound}) and (\ref{ajest}), we obtain  for $j=1,2$:
$$
0< \beta/2 \le |\om^{-1}(\Thb_n^{-1}\vec{K}_n)_j|\le 2 C_\bom, \ \ \mbox{for}\ n\ge n_0(\bom,\beta) + C_{\rm Lyap}\cdot\log B.
$$
From these bounds and (\ref{ajest}), we obtain, again for $n\ge n_0(\bom,\beta) + C_{\rm Lyap}\cdot\log B$:
\begin{eqnarray}
\left|\frac{a_2}{a_1} - \frac{[\Thb_n^{-1}\vec{K}_n]_2}{[\Thb_n^{-1}\vec{K}_n]_1}\right| & \le &
 \frac{|a_2 - \om^{-1}(\Thb_n^{-1}\vec{K}_n)_2|}{|a_1|}+ \frac{|\om^{-1}(\Thb_n^{-1}\vec{K}_n)_2|\cdot
|a_1 - \om^{-1}(\Thb_n^{-1}\vec{K}_n)_1|}{|a_1|\cdot| \om^{-1}(\Thb_n^{-1}\vec{K}_n)_1|} \nonumber\\[1.3ex] & \le &
6C_1(\bom)\beta^{-2}B C_\zeta \exp[-(\theta_2-\delta_1)n]. \label{ur31}
\end{eqnarray}

On the other hand, comparing (\ref{ur2}) for $n$ and $n+1$ yields
$$
\vec{K}_{n+1} + \vec{\eps}_{n+1} -\om\vec{\xi}_{n+1} = \Thb_{n+1} \Thb_n^{-1}[\vec{K}_n + \vec{\eps}_n -\om\vec{\xi}_n],
$$
hence, using $|\om|\le B$, we obtain
$$
\|\vec{K}_{n+1} - \Thb_{n+1} \Thb_n^{-1}\vec{K}_n \|_\infty \le 
\|\vec{\eps}_{n+1}\|_\infty + B\|\vec{\xi}_{n+1}\|_\infty + \|\Thb_{n+1} \Thb_n^{-1}\|_\infty
(\|\vec{\eps}_n \|_\infty + B \| \vec{\xi}_{n}\|_\infty).
$$
This implies, in view of (\ref{nov4}), for $n\ge n_0(\bom)$:
\begin{eqnarray}
 \left|K_{{n+2}} - [\Thb_{n+1} \Thb_n^{-1}\vec{K}_n]_2\right| 
& \le & (1+C_\zeta \exp[2(W_n+W_{n+1})]\times \nonumber \\
& \times & \left( \max\{|\eps_{n}|,|\eps_{{n+1}}|,|\eps_{{n+2}}| \}+ 
B \max \{|\xi_{n}|,|\xi_{{n+1}}|,|\xi_{{n+2}}| \} \right).\label{ineq5}
\end{eqnarray}
Let
\be \label{def-rho_n}
M_n := 1+C_\zeta \exp[2(W_n+W_{n+1})]\ \ \ \mbox{and}\ \ \ \rho_n = \frac{1}{4M_n}\,.
\ee

\begin{lemma} \label{lem-vspom2}
For all $n\ge n_0(\bom,\beta)+C_{\Lyap}\log B$, we have the following, independent of $\om\in [B^{-1},B]$ and
$\vec{s}\in \Delta_m$, satisfying $|a_j| = |P_\bom^j (\vec{s})|\ge \beta>0$:

{\bf (i)} Given $K_{n}, K_{{n+1}}$, there are at most $2M_n+1$ possibilities for the integer $K_{{n+2}}$;

{\bf (ii)} if $\max\{|\eps_{n}|,|\eps_{{n+1}}|,|\eps_{{n+2}}| \}< \rho_n$, then $K_{{n+2}}$ is uniquely determined by $K_{n}, K_{{n+1}}$.
\end{lemma}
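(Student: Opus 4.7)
The whole argument is a straightforward reading of the estimate~(\ref{ineq5}), together with the observation that the vector $[\Thb_{n+1}\Thb_n^{-1}\vec{K}_n]$ depends only on $\bom$ (which determines the matrices $\Thb_{n},\Thb_{n+1}$) and on the two integers $K_{n},K_{n+1}$, so in particular its second component is uniquely determined by $(\bom,K_n,K_{n+1})$. Call that component $c_n=c_n(\bom,K_n,K_{n+1})$. Thus (\ref{ineq5}) says
$$
|K_{n+2}-c_n|\ \le\ M_n\,\bigl(\max_{j=0,1,2}|\eps_{n+j}|+B\max_{j=0,1,2}|\xi_{n+j}|\bigr),
$$
and (i) and (ii) are quantitative statements about integer points in an interval centered at $c_n$.

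For (i), my plan is to bound each of the two contributions on the right by $1/2$. The first one is automatic: $|\eps_k|\le 1/2$ by the definition~(\ref{dio1}). For the second, I would use (\ref{decay1}), which gives $|\xi_{k}|\le \exp[(\theta_3+\delta_1)k]$ for $k\ge n_0(\bom)$ with $\theta_3+\delta_1<0$; taking $n\ge n_0(\bom)+C_{\Lyap}\log B$ with $C_{\Lyap}=|\theta_3+\delta_1|^{-1}$ guarantees $B|\xi_{n+j}|\le 1/2$ for $j=0,1,2$. Thus $|K_{n+2}-c_n|\le M_n$, so $K_{n+2}$ lies in the closed interval $[c_n-M_n,c_n+M_n]$ of length $2M_n$, which contains at most $2M_n+1$ integers.

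For (ii), the plan is to replace the $1/2$-bounds used above by the sharper bound $\rho_n=1/(4M_n)$ for each of the two maxima, so that the right-hand side of~(\ref{ineq5}) becomes strictly smaller than $M_n(\rho_n+\rho_n)=1/2$. The hypothesis gives $\max_{j=0,1,2}|\eps_{n+j}|<\rho_n$ directly. To also achieve $B\max_{j=0,1,2}|\xi_{n+j}|\le\rho_n$, i.e.\ $4M_n^2 B|\xi_{n+j}|\le 1$, I would combine Remark~\ref{pointout} (for any $\widetilde\delta>0$ we have $W_k\le\widetilde\delta k$ once $k$ is large enough) with the exponential decay $|\xi_{n+j}|\le\exp[(\theta_3+\delta_1)(n+j)]$. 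Picking $\widetilde\delta$ so small that $8\widetilde\delta+\theta_3+\delta_1<0$ makes $M_n^2 |\xi_{n+j}|$ exponentially small in $n$, and a final adjustment of the additive $C_{\Lyap}\log B$ term absorbs the factor $B$. With both bounds in place, $|K_{n+2}-c_n|<1/2$, and an open interval of length less than $1$ contains at most one integer; since one value of $K_{n+2}$ must realize the equality $\om|\zeta^{[n+2]}(v_{n+2})|_{\vec{s}}=K_{n+2}+\eps_{n+2}$, that integer is unique and determined by $(\bom,K_n,K_{n+1})$.

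The only mild technical point, and the place where I would have to be careful, is arranging that $n_0(\bom,\beta)$ (rather than just $n_0(\bom)$) is large enough in (ii) for the competition between the subexponential growth of $M_n=1+C_\zeta e^{2(W_n+W_{n+1})}$ and the exponential decay of $|\xi_{n+j}|$ to go in our favor. The role of $\beta$ here is only that it has already been used in the upstream derivation of~(\ref{ineq5}) (via~(\ref{abound})--(\ref{ajest})), so the threshold $n_0(\bom,\beta)$ inherited from that passage carries over; no new dependence on $\beta$ is introduced in either (i) or (ii).
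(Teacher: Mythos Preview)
Your proposal is correct and follows essentially the same route as the paper's proof: both parts are read off directly from~(\ref{ineq5}), with part (i) using the trivial bounds $|\eps_k|\le 1/2$ and $B|\xi_k|\le 1/2$, and part (ii) using the hypothesis on the $\eps$'s together with Remark~\ref{pointout} to control $M_n$ against the exponential decay of $|\xi_k|$. One small slip: the condition $B\max_j|\xi_{n+j}|\le\rho_n$ rewrites as $4M_n B|\xi_{n+j}|\le 1$, not $4M_n^2 B|\xi_{n+j}|\le 1$; this is harmless since your choice of $\widetilde\delta$ handles the stronger inequality anyway.
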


\begin{proof}
For part (i), we just use that $\|\vec{\eps}_n\|_\infty\le \half$ for all $n$ and $\|\vec{\xi}_n\|_\infty \le (2B)^{-1}$ for $n\ge n_0(\bom) +C_{\Lyap}\log B$, and that the number of integer points in an interval of length $2M_n$ is at most $2M_n+1$. 

For part (ii), we claim that
$K_{{n+2}}$ belongs to a neighborhood of radius less than $\half$, centered at $[\Thb_{n+1} \Thb_n^{-1}\vec{K}_n]_2$, under the given assumptions,
for $n$ sufficiently large. We have $M_n\rho_n = 1/4$, so it remains to make sure that 
$$
\max \{|\xi_{n}|,|\xi_{{n+1}}|,|\xi_{{n+2}}| \}\le \rho_n/B
$$
for $n$ sufficiently large. Note that for any $\wtil{\delta}>0$ we have, by (\ref{eqrem}):
$$
\rho_n \ge e^{-\wtil{\delta}n},\ \ \mbox{for}\ n\ge n_0(\bom).
$$
Taking $\wtil{\delta}< |\theta_3+\delta_1|$ and combining the last inequality with (\ref{xibound}) implies the desired claim.
\end{proof}

\begin{proof}[Proof of Proposition~\ref{prop-EK}]
Let $\wt E_N(\delta,\beta,B)$ be defined by 
\begin{eqnarray*}
\wt E_N(\delta,\beta,B) & := & \Bigl\{\vec{s}\in \Delta_{m}:\ \min_{j=1,2}|P_\bom^j(\vec{s})|\ge \beta \ \mbox{and}\ \exists\,\om\in [B^{-1},B]\ \mbox{such that}\\ 
 & &  
 \card\{n\le N:\ \max\{|\eps_{n}|,|\eps_{{n+1}}|,|\eps_{{n+2}}| \}\ge \rho_n\} < 
\delta N\Bigr\}.
\end{eqnarray*}
First we claim that $\P$-almost surely,
\be \label{claima}
\wt E_N(\delta,\beta,B) \supset E_N(\varrho, \delta/6, \beta, B)
\ee
for $N\ge N_0(\bom)$, where
\be \label{defK}
\varrho = (1/4)(1+C_\zeta e^{2K})^{-1}, \ \ \mbox{with}\ \ K =25 L_1 \log(1/\delta).
\ee
Here $C_\zeta$ is from Lemma~\ref{lem-new1} and $L_1$ is from Proposition~\ref{prop-proba}.
Suppose $\vec{s} \not\in \wt E_N(\delta,\beta,B)$. Then for all $\om\in [B^{-1},B]$
there exists a subset $\Gam_N \subset \{1,\ldots,N\}$ of cardinality $\ge \delta N/3$ such that
$$
|\eps_{k_n}| \ge \rho_n\ \ \ \mbox{for all}\ k_n \in \Gam_N.
$$
Observe that 
 there are fewer than $\delta N/6$ integers $n\le N$ for which $W_n+W_{n+1}>K$, for $N\ge N_0(\bom)$.
Indeed, otherwise we can find $\Psi\subset \{1,\ldots,N\}$, with $|\Psi|\ge \delta N/12$, 
such that $W_n >K/2$ for $n\in \Psi$, hence
$$
\sum\{W_n:\ n\in \Psi\} \ge K \delta N/24,
$$
which contradicts (\ref{w-cond2}) for $K> 24L_1 \log(1/\delta)$. In view of (\ref{def-rho_n}) and (\ref{defK}), it follows that
$$
\card\bigl\{n\in \Gam_N:\ \rho_n \ge \varrho\bigr\}\ge \delta N/6.
$$
Thus $\vec{s}\not \in E_N(\varrho, \delta/6, \beta, B)$ which confirms (\ref{claima}).

\medskip

\begin{sloppypar}
It follows that it is enough to estimate the dimension of 
$$
\wt \Ek:=\wt\Ek(\delta,\beta,B):= 
\bigcap_{N_0=1}^\infty   \bigcup_{N=N_0}^\infty \wt E_N(\delta,\beta,B).
$$
 Suppose $\vec{s} \in \wt E_N:=\wt E_N(\delta,\beta,B)$; choose  $\om$ from the definition of $\wt E_N$, and find the 
corresponding sequence $K_{n}, \eps_{n}$. In order to prove that $\dim_H(\wt \Ek)<m-2+\eps$, it is enough to show that the set of $a_2/a_1$ corresponding to $\vec{s}\in \wt \Ek$ has Hausdorff dimension smaller than $\eps$. We have from (\ref{ur31}) that 
$a_2/a_1$ is covered by an interval of radius 
\be \label{eqrad}
C_1(\bom,\beta) \cdot B \exp[-(\theta_2-\delta_1)N], \ \ \mbox{for}\ N\ge N_0(\bom,\beta) + C_\Lyap\log B,
\ee
centered at $[\Thb_n^{-1}\vec{K}_n]_2/[\Thb_n^{-1}\vec{K}_n]_1$.
Thus we need to estimate the number of sequences $K_{n}$, $n\le N$, which may arise. Let $\Psi_N$ be the set of $n\in \{1,\ldots,N\}$ for which
we have $\max\{|\eps_{n}|,|\eps_{{n+1}}|,|\eps_{{n+2}}| \}\ge \rho_n$.
By the definition of $\wt E_N$ we have $|\Psi_N| <\delta N$. 
There are $\sum_{i< \delta N} {N\choose i}$ such sets. For a fixed $\Psi_N$ the number of possible sequences $\{K_{n}\}$ is at most
$$
\Bk_N:= \prod_{n\in \Psi_N} (2M_n+1),
$$
times the number of ``beginnings'' $K_{1},\ldots,K_{{n_2}}$, by Lemma~\ref{lem-vspom2}.
The number of possible ``beginnings'' is  bounded, independent of $N$ by a constant depending on $\beta$ and $B$, in view of the a priori bounds on $\om$ and $\vec{s}$.
By the definition of $M_n$ and (\ref{w-cond2}), we have,  for $N$ sufficiently large:
$$
\Bk_N \lesssim \exp\left(C'' \sum_{n\in \Psi_N} (W_n+W_{n+1})\right)\le \exp\left[\wtil{L}\log(1/\delta)(\delta N)\right].
$$
Thus, by (\ref{eqrad}), the number of balls of radius $O_{\beta,B}(1)e^{-(\theta_2-\delta_1)N}$ needed to cover $\wt \Ek$ is at most
\be \label{lasta1}
O_{\beta,B}(1)\cdot  \sum_{i<\delta N} {N\choose i}  \exp\left[\wtil{L}\log(1/\delta)(\delta N)\right] \le O_{\beta,B}(1)\cdot \exp\left[(\wtil{L}+C') \log(1/\delta)(\delta N)\right],
\ee
using (\ref{Stirling}) in the last inequality. Since $\delta\log(1/\delta)\to 0$ as $\delta\to 0$, we can choose $\delta_0>0$ so small 
that $\delta< \delta_0$ implies
$$
\left[(\wtil{L}+C') \log(1/\delta)(\delta N)\right] < \epsilon (\theta_2-\delta_1)N,
$$
whence  $\wt \Ek$ has Hausdorff dimension less than $\epsilon$, as desired. The proof of Proposition~\ref{prop-EK}, and hence of
Theorem~\ref{th-main11}, is complete.
\end{sloppypar}
\end{proof}


\section{Derivation  of Theorem~\ref{main-moduli}  from Theorem~\ref{th-main1}}
Consider our surface $M$ of genus 2. By the results of \cite[Section 4]{Buf-umn}  there is a correspondence between almost every translation flow and
an element $\bom\in \Om$ (space of 2-sided Markov compacta), such that the (uniquely ergodic) flow is measure-theoretically conjugate to the uniquely ergodic flow $(X(\bom),h_t^+)$ and hence to the suspension flow $(\Xxs,h_t)$ over the Vershik map $(X_+(\bom),\Tf)$ with the roof function corresponding to an appropriate vector $\vec{s}$,
see Lemma~\ref{symb-flow}.
By construction (see \cite{Buf-umn}), this correspondence intertwines the Teichm\"uller flow on the space of Abelian differentials and a measure-preserving system $(\Om,\P,\sigma)$, as considered at the beginning of our Section 4. The key point here is that
the Masur-Veech measure on the space of abelian differentials is taken, under this correpondence,   to a measure mutually absolute continuous with the product of the
measure $\P_+$ on $\Om_+$ and the Lebesgue measure on the $3$-dimensional set of possible vectors $\vec{s}$ defining the suspension.

More precisely: as is well-known, the translation flow on the surface can be realized as a suspension flow over
an interval exchange transformation (IET), see \cite{viana2} for details.
Veech \cite{veech}
constructed, for any connected component of a stratum $\Hk$, a measurable finite-to-one map from the space $\Vk(\Rk)$ of
zippered rectangles corresponding to the Rauzy class $\Rk$, to $\Hk$, which intertwines the Teichm\"uller flow on $\Hk$ and a renormalization flow $P_t$ that Veech defined on $\Vk(\Rk)$. 
Observe that in our case the
stratum $\Hk(2)$ is connected and corresponds to the Rauzy class of the IET with permutation $(4,3,2,1)$. 
In general, the Veech mapping from  $\Vk(\Rk)$ to $\Hk$ is not bijective (it corresponds to passing from absolute
to relative real cohomologies in the manifold $M$), but in our case the kernel is trivial, since the manifold has only
one singularity and therefore there are no saddle connections. For background and complete details the reader is referred to 
\cite{Buf-umn} and \cite{viana2}. 

Section 4.3 of \cite{Buf-umn}  constructs the symbolic coding of the flow $P_t$ on $\Vk(\Rk)$, namely, a map
\be \label{map-veech}
\Xi_{\Rk}: (\Vk(\Rk),\wt\mu_2) \to ({\Omega},\P),
\ee
defined almost everywhere, where $\wt\mu_2$ is the pull-back of the Masur-Veech measure $\mu_2$ from 
$\Hk$ and $(\Om,\P)$ is a space of Markov compacta. The first return map of the flow $P_t$ for an appropriate Poincar\'e section is mapped by $\Xi_\Rk$ to the shift map $\sigma$ on $(\Omega,\P)$. This correspondence maps the Rauzy-Veech cocycle over the Teichm\"uller flow into the renormalization cocycle for the Markov compacta. 
Moreover, the map $\Xi_\Rk$ induces a map defined for a.e.\ $\Xk\in \Vk(\Rk)$, from the corresponding Riemann surface $M(\Xk)$ to a Markov compactum $X(\bom)\in {\Omega}$, intertwining their 
vertical and horizontal flows.


Now let $f$ be a Lipschitz function on $M$ with an abelian differential $\omb$. Under the symbolic coding from \cite{Buf-umn}, it is mapped into a
weakly Lipschitz function on $X(\bom)$ and then to a weakly Lipschitz function $f^{\vec{s}}$ on $\Xxs$ with  $\vec{s}$ given by Lemma~\ref{symb-flow}. By definition, its norm $\|f^{\vec{s}}\|_L$ is dominated by $\|f\|_L$ for all $\vec{s}$. 
Once we check all the assumptions, we can apply Theorem~\ref{th-main1} and obtain the H\"older property of the spectrum of the suspension flow for $\P$-a.e.\ $\bom\in \Om$, for Lebesgue-a.e.\ $\vec{s}$, which in view of the mutual absolute continuity indicated above, is equivalent to the  H\"older property for the flow $h_t^+$, as desired.
Note that the dependence of $r_0$ on $\beta$ (determined by
$\bom$ and $\vec{s}$) in Theorem~\ref{th-main1} will be subsumed by the dependence of $r_0$ on $\bom$ in
Theorem \ref{main-moduli}.

In order to reduce Theorem \ref{main-moduli} to Theorem~\ref{th-main1}, we must now check that the assumptions of Theorem \ref{th-main1} hold for  the left shift on the space of Markov compacta endowed with the push-forward of the Masur-Veech smooth measure under the isomorphism of \cite{Buf-umn}. It is clear that condition (C1) follows from (b), which we discuss below. Condition (C2) holds because the renormalization matrices in the
Rauzy-Veech induction all have determinant $\pm 1$. Condition (C3) holds by a theorem of Zorich \cite{Zorich}.
The condition (a) on the Lyapunov spectrum from Theorem~\ref{th-main1} follows from results of Forni \cite{F2} in 
our case
(later Avila and Viana \cite{AV} proved this for an arbitrary genus $\ge 2$).

In order to ensure conditions (b) and (c), 
we need to recall 
the construction of the Markov compactum and Bratteli-Vershik realization of the translation flow from \cite{Buf-umn}.
The symbolic representation of the translation flow on the 2-sided Markov compactum is obtained as the natural extension of the 1-sided symbolic representation for the IET which we now describe. An interval exchange is denoted by $(\lam,\pi)$, where $\pi$ is the permutation of $m$ subintervals and $\lam$ is the vector of their lengths. The well-known Rauzy induction (operations ``a'' and ``b'') proceeds by inducing on a smaller interval, so that the first return map is again an exchange of $m$ intervals. The Rauzy graph
is a directed labeled graph, whose vertices are permutations of IET's and the edges lead to permutations obtained by applying one of the operations. Moreover, the edges are labeled by the type of the operation (``a'' or ``b''). As is well-known, for almost every IET, there is a corresponding infinite path in the Rauzy graph, and the length of the interval on which we induce tends to zero. For any finite ``block'' of this path, we
have a pair of intervals $J\subset I$ and IET's on them, denoted $T_I$ and $T_J$, such that both are
exchanges of $m$ intervals and $T_J$ is the first return map of $T_I$ to $J$. Let $I_1, \dots, I_m$ be the subintervals of the exchange $T_I$ and $J_1, \dots, J_m$ the subintervals of the exchange $T_J$. 
Let $r_i$ be the return time for the interval $J_i$ into $J$ under $T_I$, that is,
$
r_i=\min\{ k>0: T_I^kJ_i\subset J\}.
$
Represent $I$ as a Rokhlin tower over the subset $J$ and its induced map $T_J$, and  write 
$$I=\bigsqcup\limits_{i=1,\dots, m, k=0, \dots, r_i-1} T^{k}J_i.
$$
By construction, each of the ``floors'' of our tower, that is, each of the subintervals $T_I^{k}J_i$, is a subset of some, of course, unique,  subinterval  of the initial exchange, and we define an integer $n(i,k)$ by the formula
$$
T_I^{k}J_i\subset I_{n(i,k)}.
$$
To the pair $I,J$ we now assign a substitution $\zeta_{IJ}$ on the alphabet 
$\{1, \dots, m\}$ by the formula 
\begin{equation} \label{zeta-def}
\zeta_{IJ}: i\to n(i,0)n(i,1)\dots n(i, r_i-1).
\end{equation}
This is the sequence of substitutions arising from the Bratteli-Vershik realization of an IET. 

\medskip

\noindent {\bf Remark.} Veech \cite{veechmmj} proved that if the IET satisfies the Keane condition and is uniquely ergodic, then it is uniquely determined by the sequence of renormalization matrices arising from the Rauzy-Veech induction. This implies that the map (\ref{map-veech}) is injective on the set of zippered rectangles of full measure (those which correspond to uniquely ergodic horizontal and vertical flows).

\medskip

Condition (c) is verified in the next lemma.  
Words obtained from finite paths in the Rauzy graph will be called admissible. 

\begin{lemma} \label{lem-combi1}
There exists an admissible word $\bq$, which is {\bf simple}, whose associated matrix $A(\q)$ has strictly positive entries, and the corresponding substitution $\zeta$, with $Q = \Sf_\zeta=A(\q)^t$ having the property that there exist  {\bf good return words}
$u_1,\ldots,u_m\in GR(\zeta)$, such that $\{\vec{\ell}(u_j):\ j\le m\}$ is a basis of $\R^m$.
\end{lemma}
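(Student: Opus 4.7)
The proof will be an explicit finite combinatorial construction in the Rauzy graph of the permutation $\pi=(4,3,2,1)$ associated with the stratum $\Hk(2)$. My plan is to exhibit a single admissible word $\bq$ satisfying all three conditions simultaneously: positivity of $A(\bq)$, simplicity of $\bq$, and existence of $m=4$ good return words for $\zeta=\zeta(\bq)$ whose population vectors form a basis of $\R^4$.

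First I would produce an admissible word $\bq_0$ whose incidence matrix $A(\bq_0)$ has all strictly positive entries. Since the Rauzy class of $(4,3,2,1)$ is complete (hence its Rauzy graph is finite and strongly connected), any admissible path traversing every edge sufficiently many times yields such a matrix, as can be verified by a direct computation on the graph. To guarantee simplicity, I would then set $\bq:=\bq_0\,r$, where $r$ is a short admissible extension chosen so that $\bq$ begins with one of the two Rauzy operations and ends with the other. This immediately rules out non-trivial self-overlaps: any proper suffix of length $\ell\ge 1$ must begin with the last letter of $\bq$, whereas the prefix of length $\ell$ begins with the first letter. As noted in the remark following Theorem~\ref{th-main1}, this step is routine in our setting (unlike in \cite{BufGur}), and positivity of $A(\bq)=A(r)A(\bq_0)$ is automatically preserved.

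For the good return words, I would pass to a sufficiently high power $\zeta:=\zeta(\bq)^N$. By primitivity of $\zeta(\bq)$ (immediate from positivity of $Q$), for $N$ large every factor of bounded length of the fixed point of $\zeta(\bq)$ appears as a subword of $\zeta(b)$ for every $b\in\Ak$. Consequently, any word $v$ starting with some $c\in\Ak$ such that $vc$ occurs in the fixed point satisfies $v\in GR(\zeta)$, giving an abundant supply of candidates from which to try to select four with linearly independent population vectors.

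The main obstacle is this final selection step. Although the supply is large, the linear span of return-word population vectors is a priori only an $\Sf_\zeta$-invariant subspace containing the Perron direction of $\Sf_\zeta$, and primitivity alone does not force this subspace to exhaust $\R^4$. I would handle this by explicit verification for the concrete $\bq$ chosen above: enumerate short good return words occurring in an initial segment of the fixed point of $\zeta(\bq)$, compute their population vectors, and test independence by direct linear algebra. Should a first choice of $\bq$ fail the independence test, I would enlarge $\bq$ by further admissible concatenations—which only enriches the return-word set—and repeat the check. Because the Rauzy graph for $\Hk(2)$ is small and fully enumerable, this procedure terminates in finitely many steps and produces the required $\bq$.
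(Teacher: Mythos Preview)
Your proposal has a genuine gap at the crucial step, and a secondary error in the simplicity argument.

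The simplicity argument is incorrect: a proper suffix of $\bq=q_1\cdots q_k$ of length $\ell$ begins with $q_{k-\ell+1}$, not with $q_k$, so having distinct first and last letters rules out only the length-$1$ overlap (e.g.\ $abab$ begins with $a$, ends with $b$, but is not simple). There is also a tension you do not address: after passing to $\zeta(\bq)^N$, the underlying admissible word is $\bq^N$, which is never simple for $N\ge 2$.

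The real problem is the linear-independence step, which is the heart of the lemma. You correctly note that primitivity alone does not force the population vectors of return words to span $\R^m$, and then propose to search: compute return words for a concrete $\bq$, check independence, and if it fails, enlarge $\bq$ and try again. You assert this terminates because the Rauzy graph is finite, but finiteness of the graph is irrelevant here---you give no reason why \emph{some} admissible $\bq$ must admit $m$ good return words with independent population vectors. As written, this is a hope, not a proof.

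The paper avoids the search entirely with a conceptual trick. It first arranges, via a preliminary lemma, that the substitution $\eta=\eta(W)$ has every $\eta(j)$ beginning with the \emph{same} letter $c$ (achieved by running Rauzy induction until the induced interval lies inside the first subinterval of the original IET). Then in $\eta^3(i)$ every block $\eta(j)$ is followed by another $\eta$-block, which again starts with $c$; hence each $u_j:=\eta(j)$ is a good return word for $\zeta:=\eta^3$. The punch line: $\vec\ell(u_j)$ is precisely the $j$-th column of $\Sf_\eta$, and Rauzy renormalization matrices have determinant $\pm 1$, so these columns automatically form a basis of $\R^m$. This is the idea you are missing.
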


\medskip

We start with a preliminary claim.
\begin{lemma}\label{ex-zeta}
There exists a letter $c$ and an admissible simple word $W$ such that $\eta(j)$ starts with $c$, for all letters $j\le m$, where $\eta = \eta(W)$ is the  substitution associated to $W$.
\end{lemma}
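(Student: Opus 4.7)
I would prove the lemma in two steps: first produce an admissible word (ignoring simplicity) satisfying the prescribed first-letter condition, then upgrade it to a simple admissible word.

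For the first step, my starting point is the definition~\eqref{zeta-def}: the first letter of $\zeta_{IJ}(j)=n(j,0)n(j,1)\cdots n(j,r_j-1)$ is precisely the index $n(j,0)$ for which the subinterval $J_j$ is contained in $I_{n(j,0)}$. Consequently, the requirement that $\eta(W)(j)$ begin with a fixed letter $c$ for all $j$ is equivalent to the geometric condition $J \subset I_c$. Now pick any Keane IET in the given Rauzy class, i.e., any IET for which all orbits of the endpoints of the exchange are infinite and disjoint; such IETs exist in abundance. Under Rauzy-Veech induction the left endpoint is fixed and the right endpoint moves strictly to the left, producing a decreasing nested sequence of induced intervals $I=I^{(0)}\supsetneq I^{(1)}\supsetneq\cdots$ with lengths $L_n\to 0$. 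As soon as $L_n<\lam_1$, we have $I^{(n)}\subset I_1$; reading off the finite sequence of Rauzy-Veech operations carried out between step $0$ and step $n$ produces an admissible word $W_0$ in the Rauzy graph for which $\eta(W_0)(j)$ begins with $c=1$ for every $j\le m$.

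For the second step, I observe that the property is stable under right-concatenation: if $\eta(W_0)(k)$ begins with $c$ for every $k\in\A$, then the composition rule $\eta(W_0 Y)=\eta(W_0)\circ \eta(Y)$ gives, for every $j$, that $\eta(W_0 Y)(j)$ starts with the first letter of $\eta(W_0)(\text{first letter of }\eta(Y)(j))$, which is $c$. Thus any admissible continuation $W=W_0 Y$ still satisfies the conclusion of the lemma, and it suffices to choose $Y$, starting at the terminal vertex of $W_0$ in the Rauzy graph, so as to render $W_0 Y$ simple. Since the Rauzy graph of $\Hk(2)$ is strongly connected and carries at least two outgoing edges at every vertex (the operations ``a'' and ``b''), there is plenty of combinatorial freedom: the number of admissible continuations of a given length grows at least exponentially, while the set of ``bad'' continuations (those for which some proper suffix of $W_0 Y$ coincides with the corresponding prefix) is cut out by finitely many equational constraints. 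Concretely, I would choose $Y$ to end with a long block of one operation type whose length exceeds $|W_0|$, and rule out each possible non-trivial border by inspecting separately those shorter than $|W_0|$ (excluded because the suffix is uniform while the matching prefix, living inside $W_0$, is not) and those at least $|W_0|$ long (excluded because the matching suffix then contains the non-uniform part of $W_0$).

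The subtle point will be the simplicity step: although it is intuitively clear that a generic long admissible continuation avoids all borders, a careful bookkeeping of the finitely many possible borders of $W_0 Y$ is needed, organized by length; the argument ultimately rests on the fact that the Rauzy graph of $\Hk(2)$ is rich enough to permit a continuation $Y$ whose last $|W_0|$ symbols are of a single edge-type while $W_0$ itself is not. Once this is done, the resulting word $W=W_0 Y$ is an admissible simple word with $\eta(W)(j)$ starting with $c=1$ for all $j$, as required.
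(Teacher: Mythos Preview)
Your approach is essentially the same as the paper's. The paper also uses the geometric observation that under Rauzy--Veech induction the induced interval eventually sits inside $I_1$, forcing the common first letter $c=1$; the only cosmetic difference is that the paper produces the path by iterating a fixed positive loop $V$ (a periodic Rauzy--Veech expansion) rather than by following a Keane IET. Both then upgrade to a simple word by appending a suitable tail, relying on exactly the stability you state: concatenation preserves the ``common first letter'' property.

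There is, however, a gap in your simplicity step as written. Your case analysis for borders shorter than $|W_0|$ asserts that ``the suffix is uniform while the matching prefix, living inside $W_0$, is not,'' but this fails whenever $W_0$ itself begins with the same operation symbol you append: if $W_0=bba$ and $Y=b^4$, then $W=bbab^4$ has the border $b$ of length $1$. So ``$W_0$ is not of a single edge-type'' is not enough; you must also control the \emph{first} symbol of the word. The paper handles this by arranging the first symbol to be $a$ and appending a $b$-block (so all short borders die because the prefix starts with $a$ while the suffix is a power of $b$). You can make the same fix: either prepend a single edge to $W_0$ (prepending also preserves the common-first-letter property, since $\eta(ZW_0)(j)$ begins with the first letter of $\eta(Z)(c)$, independent of $j$), or simply note that $W_0$ must contain both symbols and restart the argument at the first $a$ (resp.\ $b$). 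With this adjustment, the border of any length $\ell\le L$ is excluded immediately. For borders of length $\ell>L\ge |W_0|$, your sketch also needs a line of care: one should track where the unique $a$ preceding the terminal $b$-block lands under the putative border, which forces an impossible equality of positions. Once these details are filled in, your argument goes through and matches the paper's.
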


\begin{proof} Indeed, start with an arbitrary loop $V$ in the Rauzy graph such that the corresponding renormalization matrix has all entries positive. Consider the  interval exchange transformation with periodic 
Rauzy-Veech expansion obtained by going along the loop repeatedly (it is known from \cite{veech} that such an IET exists).
As the number of passages through the loop grows, the length of the interval forming phase space of the new interval exchange (the result of the induction process) goes to zero. In particular, after sufficiently many moves, this interval will be completely contained in the first subinterval of the initial interval exchange --- but this  means, in view of (\ref{zeta-def}) that $n(i,0)=1$ for all $i$, and hence the resulting substitution $\zeta(V^n)$ has the property that $\zeta(V^n)(j)$ starts with $c=1$ for all $j$. It remains to make sure that the admissible word is simple.
Observe that concatenating two loops $V_1$, $V_2$ starting at the same vertex we obtain $\zeta(V_1V_2)=\zeta(V_2)\zeta(V_1)$. If $\zeta(j)$ starts with $c$ for all $j$, then $\xi\zeta(j)$ starts with the first letter of $\xi(c)$ for all $j$. Thus, we can make sure that $V_1=V^n$ starts and ends with the same Rauzy operation symbol --- either $a$ or $b$, by appending another loop at the end.
We can then take $V_2$ to be the loop of the other Rauzy operation symbol starting at the same vertex. As a result, we obtain  the word in the alphabet $\{a,b\}$ corresponding to 
$W:= V_1V_2$ has the form $a\wt V_1 ab^k$ or $b\wt V_1b a^k$, which is obviously simple. The proof is complete. We are using here the fact that in the Rauzy graph there are both
$a$- and $b$-cycles starting at every vertex.
\end{proof}

\medskip

\begin{proof}[Proof of Lemma~\ref{lem-combi1}]
Let $W$ be the admissible word in the Rauzy graph given by Lemma \ref{ex-zeta}. By construction, the matrix  $\Sf_\eta$ of the substitution $\eta=\eta(W)$ has all entries strictly positive, hence $\eta(i)$ contains all letters $j$, for any $i\le m$.
We can always replace the substitution $\eta$ by its positive power $\eta^k$, since $\eta$ corresponds to a loop in the Rauzy graph. Note that, for every $i\le m$, the word
$\eta^2(i)$ is a concatenation of {\em all} words $\eta(j)$, $j\le m$, in some order, maybe with repetitions,  all of which begin with 
$c$. Therefore, for every $i\le m$, the word $\eta^3(i)$ contains every $\zeta(j)$, $j\le m$, followed by another $\zeta(j')$, also starting with $c$. It follows that $u_j:=\eta(j)$ is a good return word for $\zeta:=\eta^3$, for every $j\le m$. The population vector
$\vec{\ell}(\eta(j))$ is the $j$-th column vector of $\Sf_\eta$. As is well-known, the
matrices corresponding to Rauzy operations are invertible, which implies that the columns of $\Sf_\eta$ span $\R^m$. Finally, note that if $W$ is simple, then $\q=WWW$ is simple as well, 
and it has all the desired properties. The proof is complete. 
\end{proof}

The only remaining, key condition to check is (\ref{eq-star1}).  It will be derived from a variant of the the exponential estimate for return times of the Teichm{\"u}ller flow into compact sets. 
For large compact sets of special form, this estimate is due to Athreya \cite{Ath},  whereas in the general form it was established in \cite{buf-jams} 
and independently in \cite{AGY}.
We will mostly use the same notation as above, but indicate the correspondence with the notation of \cite{buf-jams}. The symbolic coding of the Rauzy-Veech induction map on the space of interval exchange transformations used in \cite{buf-jams}   corresponds to the symbolic coding of the Teichm{\"u}ller flow as a suspension flow over the shift on the space of Markov compacta: indeed, the Rauzy-Veech expansion  precisely identifies an interval exchange transformation with a Bratteli-Vershik automorphism (cf. \cite{Buf-umn}). 

The symbol 
$\Delta(\Rk)$ stands for the space of interval exchange transformations 
whose permutation lies in a given Rauzy class  $\Rk$ (fixed and omitted  from notation); the symbolic space 
$\Omega^+$ is the one-sided topological Markov chain over a countable alphabet that realizes the symbolic coding of the Rauzy-Veech induction map; the space 
$\Omega$ is its natural extension, the  corresponding 
two-sided topological Markov chain.  
The space $\Omega$ can also be viewed as the phase space of the natural extension of the Rauzy-Veech induction, that is, the space of
 sequences of interval exchange transformations ordered by nonpositive integers:
$$
(\la(0), \pi(0)), (\la(-1), \pi(-1)), \dots, (\la(-k), \pi(-k)), \dots)
$$
where $(\la(n), \pi(n))$ is the image of $(\la(n-1), \pi(n-1))$ under 
the Rauzy-Veech induction map, in particular
$$
\la(n)=\frac{\Af_n\la(n+1)}{|(\Af_n\la(n+1)|},
$$
\noindent
where $\Af_n$ is the  
renormalization matrix $A_n$ from Section 4. The symbol $|\la|$ stands for the sum of coordinates of a vector $\la$. In other words,
the induction map that takes $(\la(n), \pi(n))$ to  $(\la(n+1), \pi(n+1))$ consists in applying the matrix $\Af_n^{-1}$ and normalizing to unit length. Following \cite{buf-jams}, we also introduce the ``non-normalized'' 
lengths $\La(n)$ inductively by the rule $$\la(0)=\La(0),\ \  \La(n)=\Af_{n+1}\La(n+1)\ \ \mbox{ for}\  n<0.$$
Informally, $\log|\Lam(-n)|$ is the ``Teichm\"uller time'' corresponding to the discrete normalization ``Rauzy-Veech'' time $n$.

 Let $\bq=q_1\ldots q_k$ be a simple word admissible in the Rauzy class, such that the resulting Rauzy-Veech renormalization matrix  has all entries positive. We further let $\Om_\bq= [\bq.\bq]$. 
The symbol $\Prob$ denotes the push-forward of the Masur-Veech measure.
For 
$\bom\in \Omega_\bq$,
let $\ell_{\q}(\bom)$ be the   
 return time to the cylinder set $[\bq,\bq]$ in the negative direction, i.e.
 $$
 \ell_\bq(\bom) = \min\{n\ge 1:\ \bom_{-n-k+1}\ldots \bom_{-n} = \bom_{-n+1}\ldots\bom_{-n+k} = \bq\}.
 $$
Further, denote by $L_\q(\bom)$ the corresponding ``Teichm\"uller time'':
$$
L_\bq(\bom) = \log|\Lam(-\ell_\bq(\bom))|.
$$

\begin{prop}
\label{expflow}
There exists $\epsilon>0$ such that for any $\bom^+ \in \Om^+_\bq$ we have 
$$
\int_{\Omega} 
\exp(\epsilon L_{\q}(\bom))\,d\Prob(\bom|\bom^+)<+\infty.
$$
\end{prop}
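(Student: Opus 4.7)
The plan is to deduce this conditional exponential estimate from the unconditional version established in \cite{buf-jams} (see also \cite{Ath, AGY}) by exploiting the local product structure of the Masur--Veech measure on the cylinder $\Om_\bq = [\bq.\bq]$.

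First I would recall the unconditional bound from \cite{buf-jams}: there exists $\epsilon_0 > 0$ such that $\int_{\Omega} \exp(\epsilon_0 L_{\bq}(\bom))\,d\Prob(\bom) < +\infty$. In that paper the estimate is proved for the Teichm\"uller flow's return time to compact subsets of the moduli space; the cylinder $\Om_\bq$ corresponds precisely to such a compact set, because the matrix $A(\bq)$ has strictly positive entries and hence the normalized length and height vectors admissible in $[\bq.\bq]$ lie in a fixed compact subset of the open simplex $\Delta_m$.

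Next I would invoke the local product structure of the Masur--Veech measure in the zippered-rectangles coordinates. The past $\bom^-$ determines (via the inverse Rauzy--Veech cocycle) the height vector, while the future $\bom^+$ determines the normalized length vector $\la(0)$; restricted to $\Om_\bq$, the measure $\Prob$ is absolutely continuous with respect to the product of the induced smooth measures on these two factors, with Radon--Nikodym derivative bounded above and below by positive constants depending only on $\bq$. This bounded-distortion property is standard for Masur--Veech measures (it is the source of the quasi-Bernoulli property mentioned in the Remark after Theorem~\ref{th-main11}) and follows from the explicit smooth formula for $\mu_2$ in these coordinates.

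The key structural observation is that $L_\bq(\bom)$ depends only on the past $\bom^-$, and that simplicity together with the positivity of $A(\bq)$ forces the block $\bq.\bq$ to act as a buffer: the set of admissible pasts in $\Om_\bq$ does not depend on the specific admissible future $\bom^+ \in \Om^+_\bq$. Consequently, for every such $\bom^+$, the conditional distribution $\Prob(\cdot|\bom^+)$ is absolutely continuous with respect to a fixed reference measure on the space of admissible pasts, with a density uniformly bounded by some $C_\bq < \infty$. Combining this with the unconditional estimate yields
\[
\int_{\Omega} \exp(\epsilon_0 L_{\bq}(\bom))\,d\Prob(\bom|\bom^+) \le C_\bq \int_{\Omega} \exp(\epsilon_0 L_{\bq}(\bom))\,d\Prob(\bom) < +\infty,
\]
uniformly in $\bom^+$, so the proposition holds with any $\epsilon \in (0, \epsilon_0]$.

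The principal obstacle is pinning down the uniformity in $\bom^+$, i.e.\ verifying that the conditional densities are bounded by a constant independent of the future. The positivity of $A(\bq)$ is essential here: it guarantees that, after traversing one occurrence of $\bq$, the relevant cones of lengths and of heights lie strictly inside $\Delta_m$, so that both the Athreya-type cusp-excursion argument from \cite{buf-jams} and the decoupling between past and future pieces of the local product structure hold with estimates depending only on $\bq$. Once this uniform decoupling is established, the conditional bound follows directly from the unconditional one.
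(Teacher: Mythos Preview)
Your approach is correct in spirit and genuinely different from the paper's. The paper does \emph{not} reduce to the unconditional estimate; instead it reruns the argument of \cite{buf-jams} directly in conditional form, invoking two lemmas from that paper (a lower bound $p(\bq)>0$ on the probability of hitting $[\bq.\bq]$ within Teichm\"uller time $K$, conditioned on $(\la(0),\pi(0))$, and a $C/K$ tail bound on $|\Lambda(-1)|$ conditioned on $\bom^+$), then combining them via a stopping-time decomposition and a binomial-sum estimate to get $\Prob(L_\bq(\bom)>3N\,|\,\bom^+)\le \rho^N$ with $\rho<1$. Your route---unconditional exponential tail plus bounded distortion of the conditional densities on $[\bq.\bq]$---is shorter and more conceptual, and is exactly the mechanism alluded to in the Remark after Theorem~\ref{th-main11}. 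What the paper's approach buys is self-containment: one does not need to appeal to the quasi-Bernoulli/local product structure as a black box.

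One small correction: the statement ``$L_\bq(\bom)$ depends only on the past $\bom^-$'' is not literally true. While $\ell_\bq(\bom)$ is indeed a function of $(\bom_j)_{j\le 0}$ (using simplicity of $\bq$), the quantity $|\Lambda(-\ell_\bq(\bom))|$ is a product of past renormalization matrices applied to $\la(0)$, and $\la(0)$ is determined by $\bom^+$. What saves your argument is that, on $[\bq.\bq]$, the vector $\la(0)$ lies in a fixed compact subset of the open simplex and the matrix product has the form $A(\bq)A(\bp)A(\bq)$ with $A(\bq)$ strictly positive; hence $L_\bq(\bom)=\log\|A(\bq)A(\bp)A(\bq)\|+O_\bq(1)$, and the right-hand side is a function of the past alone. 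With this adjustment your displayed inequality goes through with an extra harmless constant, and the rest of your argument is fine.
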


Comparing the definitions, we see that
this is equivalent to (\ref{eq-star1}), so the remaining condition (d) in Theorem~\ref{th-main1}
will follow from Proposition~\ref{expflow}.

\begin{proof}[Proof of  Proposition~\ref{expflow}] 
We argue in much the same way as in  Section 11 of \cite{buf-jams} (compare with the arguments involving the ``bounded distortion'' condition of \cite{AV}; see also
Section 5.1 of \cite{AD}).
Our main tool will be Lemma 16 from \cite{buf-jams} whose formulation we now recall in our notation:

\begin{lemma}
\label{flowtime}
For any admissible word $\q$ such that all entries of the matrix $A(\q)$
are positive, there exist constants $K_0(\q) ,p(\q)$, depending only on
$\q$ and  such that the following is true.
For any $K\geq K_0$ and any $(\la,\pi)\in\Delta(\Rk)$,
$$
\Prob\bigl(\exists n: (\la(-n),\pi(-n))\in [\q.\q],\ 
|\La(-n)|<K)\,|\,(\la,\pi)=(\la(0),\pi(0))\bigr)\geq p(\q).
$$
\end{lemma}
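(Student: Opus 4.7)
The plan is to establish Lemma~\ref{flowtime} by combining Poincar\'{e} recurrence on $(\Omega,\Prob,\sigma)$ with a positivity-of-cocycle-product argument at the cylinder $[\q.\q]$, together with Athreya's quantitative non-divergence of the Teichm\"{u}ller flow to compact sets in moduli space.

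First I would record the key structural identity: on the event $\sigma^{-n}\bom \in [\q.\q]$, the leading $k$ factors in the cocycle product $\Af_{-n+1}\cdots \Af_0$ coincide with $A(\q_1),\ldots,A(\q_k)$, so
$$
|\La(-n)|_1 \;=\; \bigl|\, M_{\q}\, B\, \la\,\bigr|_1 \;\le\; \|M_\q\|_{1\to 1}\cdot |B\la|_1,
$$
where $M_\q := A(\q_1)\cdots A(\q_k)$ is a strictly positive matrix depending only on $\q$ and $B := \Af_{-n+k+1}\cdots \Af_0$. The desired bound $|\La(-n)|_1 < K$ thus reduces to controlling $|B\la|_1$ by $K\cdot\|M_\q\|^{-1}_{1\to 1}$, independently of $n \ge k$.

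Next I would invoke Masur-Veech ergodicity of $(\Omega,\Prob,\sigma)$, obtained via the Veech identification $\Xi_\Rk$, to apply Poincar\'{e} recurrence: the backward hitting time $\tau^-(\bom) := \min\{n\ge 1 : \sigma^{-n}\bom \in [\q.\q]\}$ is $\Prob$-a.s.\ finite, and Kac's lemma combined with Markov's inequality yields an integer $N_0 = N_0(\q)$ with $\Prob(\tau^- \le N_0) \ge \tfrac{3}{4}$. This is only an unconditional bound, which I would then upgrade into the required conditional-in-$(\la,\pi)$ statement using Athreya's quantitative non-divergence of the Teichm\"{u}ller flow~\cite{Ath}: the backward Teichm\"{u}ller orbit of \emph{any} flat surface of fixed area enters a prescribed compact subset of moduli space within Teichm\"{u}ller time $\log K_0$ with probability bounded below by a uniform positive constant, provided $K_0 = K_0(\q)$ is taken large enough. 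Translating this compact visit back through $\Xi_\Rk$, the bound on Teichm\"{u}ller time corresponds precisely to $|B\la|_1 \le K\cdot\|M_\q\|^{-1}_{1\to 1}$ for some $n \le N_0$ with $\sigma^{-n}\bom \in [\q.\q]$, completing the proof with $p(\q)$ equal to the intersection of the compact-visit probability and the Poincar\'{e} return probability.

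The main obstacle is the uniformity of the lower bound in the initial data $(\la,\pi)$: a direct appeal to Poincar\'{e} recurrence only gives an a.s.\ statement, and a naive pointwise-in-$(\la,\pi)$ conditional argument can degenerate as $(\la,\pi)$ approaches the boundary strata of $\Delta(\Rk)$. The role of Athreya's theorem is precisely to supply the uniform quantitative recurrence of the Teichm\"{u}ller flow to a compact exhaustion of moduli space, and this is what forces both the existence of a universal $K_0(\q)$ and the universal lower bound $p(\q) > 0$.
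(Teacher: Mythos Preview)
The paper does not actually prove Lemma~\ref{flowtime}: it is quoted verbatim as ``Lemma~16 from \cite{buf-jams}'' and used as a black box. So there is no ``paper's own proof'' to compare against; I evaluate your sketch on its merits.

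Your argument has a genuine gap in the upgrade step. You obtain an \emph{unconditional} bound $\Prob(\tau^-\le N_0)\ge 3/4$ from Kac--Markov, and then propose to convert it into a bound uniform in the initial IET $(\la,\pi)$ by invoking Athreya's quantitative recurrence. But Athreya's theorem \cite{Ath} is a statement about Teichm\"uller orbits of \emph{individual flat surfaces} returning to large compact subsets of moduli space; it says nothing about hitting the specific symbolic cylinder $[\q.\q]$. Your sentence ``translating this compact visit back through $\Xi_\Rk$, the bound on Teichm\"uller time corresponds precisely to $|B\la|_1\le K\|M_\q\|^{-1}$ for some $n\le N_0$ with $\sigma^{-n}\bom\in[\q.\q]$'' conflates two distinct events: bounded Teichm\"uller time (which Athreya controls) and landing in the particular cylinder $[\q.\q]$ (which it does not). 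A compact set in moduli space corresponds under $\Xi_\Rk$ to a set of bounded $|\La(-n)|$, not to any prescribed combinatorial cylinder.

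There is also a conceptual slip: you write that the backward Teichm\"uller orbit of a fixed surface enters a compact set ``with probability bounded below by a uniform positive constant,'' but for a fixed surface that orbit is deterministic. What you really need is a statement about the conditional measure $\Prob(\,\cdot\mid\bom^+)$ on the height parameters, and Athreya's theorem is not formulated that way. To make your strategy work you would still have to prove, uniformly over surfaces in a compact set, that the backward Rauzy--Veech expansion visits $[\q.\q]$ within bounded time with probability bounded away from zero---and that is essentially the content of the lemma itself. The proof in \cite{buf-jams} proceeds instead by a direct combinatorial argument on the Rauzy--Veech induction, exploiting the positivity of $A(\q)$ together with the explicit structure of the conditional (Lebesgue) measure on the height simplex.
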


\noindent {\bf Remark.} In fact, the paper \cite{buf-jams} proves this lemma for the coding of the Rauzy-Veech-Zorich induction, that is, for the ``Zorich acceleration'' of the Rauzy-Veech induction, see \cite{buf-jams} for details. However, the proof immediately transfers since it is stated in terms of the ``Teichm\"uller time'' $\log|\Lam(-n)|$, which is invariant under the acceleration. To avoid potential problems with ``overlapping occurrences'' we  choose the word $\q$ to be simple in the Rauzy alphabet $\{a,b\}$, as we do in Lemma~\ref{ex-zeta}.

\medskip

The next lemma is a reformulation of Lemma 7 from \cite{buf-jams}.

\begin{lemma}\label{lem7}
There exists $K_0\in \N$ and $C=C(\Rk)\ge 1$ such that the following holds for any $K>K_0$:
for all $\bom^+\in \Om^+$ we have
$$
\Prob\bigl(|\Lam(-1)|>K\,|\,\bom^+\bigr) \le \frac{C}{K}\,.
$$
\end{lemma}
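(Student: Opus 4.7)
The plan is to prove this as a Gauss-Kuzmin-type tail estimate on the size of the first backwards-renormalization step, conditional on the future symbolic sequence, analogous to the classical bound on the first continued fraction partial quotient under the Gauss measure.

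First I would identify $|\Lam(-1)|$ geometrically. Since $\Lam(-1)=\Af_0\Lam(0)=A(\bom_0)\lam(0)$ with $|\lam(0)|=1$, the quantity $|\Lam(-1)|$ measures how strongly the past renormalization matrix $A(\bom_0)$ expands the normalized length vector $\lam(0)$. In the zippered-rectangles picture, this translates, via the coding map $\Xi_{\Rk}$ of (\ref{map-veech}), into an assertion about the Teichm\"uller time elapsed during the single backwards Rauzy-Veech block encoded by $\bom_0$: the event $\{|\Lam(-1)|>K\}$ pulls back to a region of the length simplex on which some normalized length coordinate does not exceed $1/K$, which has Lebesgue measure $O(1/K)$. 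Since the Masur-Veech measure is absolutely continuous with respect to Lebesgue on the zippered rectangles, with bounded density on compact subsets of the Poincar\'e section, this already gives the unconditional bound $\Prob(|\Lam(-1)|>K)=O(1/K)$.

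Second, I would transfer from the unconditional estimate to the conditional estimate given $\bom^+$. Here the essential input is the bounded distortion property of the push-forward of the Masur-Veech measure on $\Om$: the conditional density of $\bom_0$ given the future $\bom^+$ is comparable, uniformly in $\bom^+$, to its unconditional density, with constants depending only on the Rauzy class $\Rk$ (cf.\ the quasi-Bernoulli property in \cite{AV}). Coupled with the unconditional $O(1/K)$ bound, this yields the desired estimate. The present Lemma \ref{lem7} is in fact a direct reformulation, in our notation, of Lemma 7 of \cite{buf-jams}; once the identifications of Section 11 between zippered-rectangle coordinates and Markov-compactum notation are in place, the translation is routine.

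The main obstacle is the uniformity in $\bom^+$: a naive argument that first fixes $\bom^+$ and then estimates the measure of $\{|\Lam(-1)|>K\}$ loses uniformity because the symbolic alphabet is infinite, so there is no compactness to invoke directly. The uniform bound must instead be extracted from the absolute continuity of Masur-Veech with respect to Lebesgue on the zippered rectangles, which enforces bounded distortion in the symbolic coding and makes the conditional density pointwise comparable to the unconditional one with constants independent of $\bom^+$.
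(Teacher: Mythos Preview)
Your proposal is correct and matches the paper's approach exactly: the paper does not prove this lemma at all but simply states that it ``is a reformulation of Lemma 7 from \cite{buf-jams}''. You arrive at the same citation and, in addition, supply a reasonable sketch of the underlying mechanism (the Gauss--Kuzmin-type tail bound together with bounded distortion for the uniformity in $\bom^+$), which is more than the paper itself provides.
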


Define a random time $k_1(\bom)$ to be
the first moment $n\ge 1$ such that $|\La(-n)(\bom)|>K$.
Note that the map
$$
{\wt \sigma}(\bom)=\sigma^{-k_1(\bom)}(\bom)
$$
is invertible.
Introduce a function $\eta: \Omega\to{\mathbb N}$ by
the formula
$$
\eta(\bom)=\left[\frac{\log |\La(-k_1(\bom))|}{\log K}\right].
$$
In other words, $\eta(\bom)=n$ if
$
K^n\leq |\La(-k_1(\bom))|< K^{n+1}.
$
Combining Lemmas \ref{flowtime} and \ref{lem7} we obtain

\begin{prop} \label{opa}
For any $\bom^+\in \Om^+$ and any $r\in \N$ we have
$$
\Prob\bigl(\{\bom:\ \eta(\bom) = r,\ \bom_{-k_1(\bom)+1},\ldots,\bom_0\ \mbox{does not contain $\q$}\,|\,
\bom^+\}\bigr) \le \frac{C}{K^{r-1}}\cdot (1-p(\q)).
$$
\end{prop}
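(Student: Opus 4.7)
The plan is to factor the event $\{\eta(\bom)=r,\ \bom_{-k_1(\bom)+1}\ldots\bom_0\text{ contains no }\q\}$ into a ``size'' constraint on $|\La(-k_1(\bom))|$ and a ``no-$\q$'' constraint on the window, and to bound the two pieces separately using Lemmas~\ref{lem7} and~\ref{flowtime}.

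First I would establish the size bound
$$
\Prob\bigl(\eta(\bom)=r\,\bigm|\,\bom^+\bigr)\le \frac{C}{K^{r-1}}
$$
by a stopping-time argument based on what must happen at the random step $k_1(\bom)$. On the event $\{\eta=r\}$ the definition of $k_1$ forces $|\La(-k_1+1)|\le K$ while $|\La(-k_1)|\in[K^r,K^{r+1})$, so the one-step ratio $|\La(-k_1)|/|\La(-k_1+1)|$ is at least $K^{r-1}$. Decomposing on $\{k_1=j\}$ and conditioning on $\bom_{-j+1},\ldots,\bom_0,\bom^+$—which, after shifting by $\sigma^{-j+1}$, plays the role of ``future'' for a new origin—this ratio is exactly the quantity $|\La(-1)|$ of the shifted system. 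Applying Lemma~\ref{lem7} with threshold $K^{r-1}$ in the shifted frame gives a conditional bound $C/K^{r-1}$ on each term; summing over $j$ (using the classical tail estimates for Rauzy-Veech return times) produces the stated size bound.

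Next, for the no-$\q$ factor I would invoke Lemma~\ref{flowtime} for the current exchange $(\la(0),\pi(0))$. It yields, with $\Prob$-probability at least $p(\q)$, an index $n\ge 1$ such that $(\la(-n),\pi(-n))\in[\q.\q]$ and $|\La(-n)|<K$. The second condition forces $n<k_1(\bom)$ by the very definition of $k_1$, while the first places a $\q$-block (via one of the two adjacent copies of $\q$ supplied by the cylinder $[\q.\q]$) in the past at positions inside the window $\bom_{-k_1+1}\ldots\bom_0$. Hence the conditional probability of having no $\q$-block in the window is at most $1-p(\q)$, and combining the two estimates yields the proposition.

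The main obstacle is making the shift-invariance step in the size bound fully rigorous: one has to verify that on $\{k_1=j\}$ the conditional distribution of $|\La(-j)|/|\La(-j+1)|$, given the shifted ``future'' $\bom_{-j+1},\ldots,\bom_0,\bom^+$, is indeed governed by Lemma~\ref{lem7} at the rescaled threshold, and that the summation over the random $k_1$ does not spoil the decay rate in $K$. This is where the Markov/bounded-distortion structure of the push-forward of the Masur-Veech measure is used, and where the constant $C$ in the statement is inherited (up to a harmless absolute factor) from the constant in Lemma~\ref{lem7}.
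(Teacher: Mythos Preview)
Your proposal is correct and matches the paper's approach; indeed the paper gives no proof beyond the single sentence ``Combining Lemmas~\ref{flowtime} and~\ref{lem7} we obtain'' Proposition~\ref{opa}, and your two ingredients---Lemma~\ref{lem7} applied at the shifted origin to control the one-step ratio $|\La(-k_1)|/|\La(-k_1+1)|\ge K^{r-1}$, and Lemma~\ref{flowtime} together with the observation that any $n$ it produces satisfies $n<k_1$---are exactly that combination. One small indexing slip: to invoke Lemma~\ref{lem7} at the shifted origin you should condition on $\bom_{-j+2},\ldots,\bom_0,\bom^+$ (the future seen from position $-(j-1)$), not on $\bom_{-j+1},\ldots,\bom_0,\bom^+$, since $\bom_{-j+1}$ is precisely the symbol that carries the jump; with that correction the ``no~$\q$'' event restricted to $\bom_{-j+2}\ldots\bom_0$ becomes measurable with respect to the conditioning and the product bound follows by the Markov structure you invoke at the end.
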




Now, take a large $N$ and let
$$
n_N(\bom)=\min \{n:\ \eta(\bom)+\dots +\eta ({\wt \sigma}^{n}\bom)\geq 
N\}.
$$
Let also
$$
k_n(\bom) = k_1(\bom) + \cdots + k_1({\wt \sig}^n\bom).
$$
Then we have, by the definition of $\eta$:
\begin{equation} \label{Lam}
K^N\le |\Lam(-k_{n_N(\bom)})|< K^{n_N(\bom) + \eta(\bom)+\dots +\eta ({\wt\sigma}^{n}\bom)}.
\end{equation}
Clearly $n_N(\bom) \le N$, and observe that for all $\bom^+\in \Om^+$,
$$
\Prob(\Bk(N)\,|\,\bom^+) \le \frac{C}{K^{N}}\,,\ \ \mbox{where}\ \ \Bk(N) = \{\bom:\ \eta({\wt \sig}^{n_N(\bom)}\bom)> N\},
$$
by Lemma~\ref{lem7}.
Consider the set
$$
{\wt \Omega(N)}=
\{\bom:\ \bom_{-k_{n_N(\bom)}+1},\dots, \bom_{0}\ \ {\rm does \ not \
contain \ the
\ word \ } \q\}\ \cap \{\bom:\ \eta({\wt\sig}^{n_N(\bom)}\bom)\le N\}.
$$
Note that
$$
\{\bom: L_{\q}(\bom)>3N \}\subset {\wt \Omega(N)} \cup \Bk(N).
$$
It suffices, therefore, to prove that there exists $\rho<1$ such that
$$
\Prob({\wt \Omega}(N)\,|\,\bom^+)\leq \rho^N.
$$
We have from Proposition~\ref{opa}, for any $\ell\in \N$ and $n_1,\ldots,n_\ell\in \N$:
\begin{eqnarray}
& & \Prob\bigl( \eta(\bom) = n_1,\ \eta(\wt{\sig}\bom) = n_2,\ldots, \eta(\wt{\sig}^\ell\bom) = n_\ell,\
\bom_{-k_\ell(\bom)+1},\ldots,\bom_0\ \mbox{does not contain}\ \q\,|\,\bom^+\bigr)\nonumber \\
& & \le  (1-p(\q))^\ell \left( \frac{C}{K}\right)^{(n_1 + \cdots + n_\ell)-\ell}. \label{tutu}
\end{eqnarray}
Choose $K\in \N$ such that $1-p(\q) + \frac{C}{K}<1$. We have
\begin{eqnarray*}
\Prob(\wt{\Om}(N)\,|\,\bom^+) & \le & \sum_{\wtil{N}=N}^{2N} \Prob(\wt{\Om}(N)\,|\,\bom^+ \ \&\ 
\eta(\bom) + \cdots + \eta(\wt{\sig}^{n_N(\om)}\bom) = \wtil{N}) \\
& \le & \sum_{\wtil{N}=N}^{2N} \left(1-p(\q) + \frac{C}{K}\right)^{\wtil{N}} \le \rho^N,
\end{eqnarray*}
using (\ref{tutu}) and the binomial formula in the last line,
and Proposition \ref{expflow} is proved.
This concludes the proof of Theorem \ref{main-moduli} as well.
\end{proof}


\end{document}